\newcommand{\comp}[3]{P_{#1 | #2}(#3)}
\newcommand{\compi}[4]{P^{(#4)}_{#1 | #2}(#3)}
\newtheorem{theo}{Theorem}[section]
\newtheorem*{theo*}{Theorem}
\newtheorem{prop}[theo]{Proposition}
\newtheorem{lem}[theo]{Lemma}
\newtheorem{cor}[theo]{Corollary}
\newtheorem{Rem}[theo]{Remark}
\newtheorem{defi}[theo]{Definition}
\newtheorem{ques}[theo]{Question}
\def\R{{\mathbb R}}
\def\Z{{\mathbb Z}}
\def\N{{\mathbb N}}
\def\cD{{\mathcal D}}
\def\cB{{\mathcal B}}
\def\cC{{\mathcal C}}
\def\cA{{\mathcal A}}
\def\cL{{\mathcal L}}
\def\cT{{\mathcal T}}
\def\cR{{\mathcal R}}
\def\cS{{\mathcal S}}
\def\cT{{\mathcal T}}
\def\cW{{\mathcal W}}
\def \rcomp {\hbox{\rm r-comp}}
\newcommand{\source}{{\mathsf{s}}}
\newcommand{\range}{{\mathsf{r}}}
\author{Sebasti\'an Donoso}
\address{Departamento de Ingenier\'{\i}a Matem\'atica and Centro de Modelamiento Matem\'atico, Universidad de Chile \& UMI-CNRS 2807, Beauchef 851, Santiago, Chile.} \email{sdonoso@dim.uchile.cl}
\author{Fabien Durand}
\address{Laboratoire Ami\'enois de Math\'ematiques Fondamentales et Appliqu\'ees, CNRS-UMR 7352, Universit\'{e} de Picardie Jules Verne, 33 rue Saint Leu, 80039   Amiens cedex 1, France.} \email{fabien.durand@u-picardie.fr}
\author{Alejandro Maass}
\address{Departamento de Ingenier\'{\i}a Matem\'atica and Centro de Modelamiento Matem\'atico, Universidad de Chile \& UMI-CNRS 2807, Beauchef 851, Santiago, Chile.}\email{amaass@dim.uchile.cl}
\author{Samuel Petite}
\address{Laboratoire Ami\'enois de Math\'ematiques Fondamentales et Appliqu\'ees, CNRS-UMR 7352, Universit\'{e} de Picardie Jules Verne, 33 rue Saint Leu, 80039   Amiens cedex 1, France.} \email{samuel.petite@u-picardie.fr}
\begin{document}
\title{Interplay between finite topological rank minimal Cantor systems, $\mathcal S$-adic subshifts and their complexity}


\subjclass[2010]{Primary: 54H20; Secondary: 37B10} 

\keywords{${\cS}$-adic subshifts, minimal Cantor systems, finite topological rank, recognizability, complexity}

\thanks{This research was partially supported by grants PIA-ANID AFB 170001 \& Fondap 15090007,  
and grant ANID/MEC/80180045 hosted by University of O'Higgins.
The first and the third author thank the hospitality of the LAMFA UMR 7352 CNRS-UPJV and the ”poste rouge” CNRS program}

\date{\today}

\maketitle

\begin{abstract}

Minimal Cantor systems  of finite topological rank   (that  can be represented by a Bratteli-Vershik diagram with a uniformly bounded number of vertices per level) are known to have dynamical rigidity properties. 
We establish that such systems, when they are expansive, define the same class of systems, up to topological conjugacy, as primitive and recognizable ${\mathcal S}$-adic subshifts.
This is done establishing  necessary and sufficient conditions for a minimal subshift to be of finite topological rank. As an application, we show that minimal subshifts with non-superlinear complexity (like all  classical zero entropy  examples) have finite topological rank. 
Conversely, we analyze the complexity of ${\mathcal S}$-adic subshifts 
and provide sufficient conditions for a finite topological rank subshift to have a non-superlinear complexity. 
This includes minimal Cantor systems given by Bratteli-Vershik representations whose tower levels have proportional heights and the so called left to right ${\mathcal S}$-adic subshifts. We also exhibit that finite topological rank does not imply non-superlinear complexity. 
In the particular case of topological rank 2 subshifts,  we prove their complexity is always subquadratic along a subsequence and their automorphism group is trivial.
\end{abstract}

\markboth{Sebasti\'an Donoso, Fabien Durand, Alejandro Maass, Samuel Petite}{Finite topological rank minimal Cantor systems versus $\cS$-adic subshifts}

\section{Introduction}
\label{sec:Introduction}

Using tools originally appearing in the theory of operator algebras, Herman {\it et al.} \cite{hps} introduced a description of any minimal Cantor system through  a representation of the space and the homeomorphism  by a Bratteli diagram and a Vershik map on it. 
This method proved to be  powerful to solve  ergodic, dynamical and  operator algebras problems, like the computation  of a complete invariant for topological orbit equivalence   \cite{Giordano&Putnam&Skau:1995}, which is the one of the most remarkable achievement.      

A way to describe the simplest minimal Cantor systems is provided by the notion of {finite topological rank}. Analogously to the eponymous notion for measurable dynamics \cite{Ornstein&Rudolph&Weiss:1982}, these  minimal Cantor systems  can be represented by  Kakutani-Rohlin partitions defining a  Bratteli-Vershik diagram with a uniformly bounded number of vertices per level.  
All the classical examples of zero-entropy  minimal Cantor systems (odometers, substitutions subshifts, linearly recurrent subshifts  or symbolic codings of interval exchange maps) have a finite topological rank.  
The  study of finite topological rank systems from decades exhibit rigidity  properties. Let us mention some of them. 
There is a dichotomy in the dynamic: they are  expansive or equicontinuous \cite{Downarowicz&Maass:2008}.
Hence non-equicontinuous finite topological rank systems are conjugate to subshifts and it is a folklore result that they have zero-entropy. 
Also the topological rank is an upper bound for the number of ergodic invariant probability measures  (see \cite{Bezuglyi&Kwiatkowski&Medynets&Solomyak:2013}), for the rational rank of the dimension group \cite{Giordano&Putnam&Skau:1995,Giordano&Handelman&Hosseini:2018} and for the rational rank of the continuous spectrum of the system \cite{Bressaud&Durand&Maass:2010}. 
To understand the (typical) properties of these systems, it is natural to try to  characterize their dynamical properties   through their combinatorial structure given by the Bratteli-Vershik diagrams.  
Their  combinatorial structures  have been exploited most satisfactorily  
in the characterization of Toeplitz subshifts  \cite{gjtoeplitz}, linearly recurrent subshifts \cite{Cortez&Durand&Host&Maass:2003} or  unique ergodicity \cite{Bezuglyi&Kwiatkowski&Medynets&Solomyak:2013}.  
It has been valuable too in the study of the continuous or measurable spectrum of minimal Cantor systems (see \cite{Bressaud&Durand&Maass:2010,Durand&Frank&Maass:2015,Durand&Frank&Maass:2019}) and in the study of the property of orbit equivalence between minimal Cantor systems, many of them being  of finite topological rank (see \cite{Giordano&Putnam&Skau:1995,Cortez&Durand&Petite:2016,Giordano&Handelman&Hosseini:2018, Berthe&Cecchi&Durand&Perrin&Petite:2019}).

Another effective way to represent a minimal subshift is through a $\cS$-adic subshift, that  is a subshift generated by the iteration of a succession of  morphisms (substitutions) on eventually different alphabets, see Section \ref{sec:Sadicsubshifts} for definitions. When  the alphabets are bounded in size,   the $\cS$-adic subshift is said to have  bounded alphabet rank. 
The constructive  description of such subshift leads  to consider  the same problematic on the description of its dynamical properties through the combinatorial data provided by the morphisms.

Few classes of  subshifts can be characterized in combinatorial terms, as substitution subshifts \cite{Durand:1998} and more generally linearly recurrent subshifts \cite{du1}. 
As a main issue in this paper, we want to state sufficient and/or necessary conditions for a minimal subshift to be of finite topological rank or conjugate to a bounded alphabet rank $\cS$-adic subshift.

One of our principal results states under natural conditions a topological equivalence between minimal $\cS$-adic subshifts with bounded alphabet rank and finite topological rank minimal Cantor system (\cref{theo:equivalenceSadicvsFiniteRank}). Positive evidence of this fact was shown in \cite{Berthe&Steiner&Thuswaldner&Yassawi:2018}, where this result was proved  in the measure theoretical category. 
To show it, we  provide a necessary and sufficient condition for a minimal subshift to have a finite topological rank (\cref{theo:FRankSuff}).
The essential key notion for the proofs  is the  recognizability of morphisms together with  the remarkable results in \cite{Berthe&Steiner&Thuswaldner&Yassawi:2018} for $\cS$-adic subshifts extending the seminal recognizability results  for substitutions by B. Mossé in \cite{Mosse:1992} and the related ones in \cite{Bezuglyi&Kwiatkowski&Medynets:2008, Downarowicz&Maass:2008}.
As another application of this criterion, we get that any minimal subshift with non-superlinear word complexity function has finite topological rank (\cref{cor:NonSuperLinareFR}). Hence  it is conjugate to a $\cS$-adic subshift with bounded alphabet rank and has all the aforementioned  dynamical restrictions.
This also recovers, in the minimal setting,  sufficient conditions of \cite{Boshernitzan:1984,Cyr&Kra:2019measures, Ferenczi&Monteil:2010} on the complexity to ensure a finite number of ergodic invariant  probability measures.
Analogous results were obtained in relation to the $\cS$-adic conjecture,  in the measurable setting \cite{Ferenczi:1996} and  in \cite{Durand&Leroy:2012} for minimal subshifts whose complexity difference is at most two.
Along the way and following an idea due to T. Monteil, we  provide a characterization of   Rauzy graphs associated to  minimal subshifts of non-superlinear complexity (\cref{prop:Monteil}).  
Notice nevertheless, that \cref{cor:NonSuperLinareFR} does not admit a converse since we explicitly present  a minimal subshift of topological rank two with a superlinear complexity in Section \ref{sec:Rk2NonSperline}. 
We go deeper into  the relationship between the finite topological rank property and the complexity  of words by studying  the complexity  function within the class of $\cS$-adic subshifts. Upper bounds for it are given in terms of a notion of relative complexity of morphisms  in Section \ref{sec:ComplexityFR}. 
We deduce several applications, among them,  conditions so that a finite topological rank subshift has a non-superlinear complexity. 
Concerning the complexity  of a topological rank two minimal subshift, or of a minimal $\cS$-adic subshift on $2$ letters alphabets, we show it is always sub-quadratic along a subsequence (\cref{cor:SadicNonSuperQuadra}). 
Concerning the orbit equivalence theory, we obtain that any finite topological rank  minimal Cantor system is strongly orbit equivalent to a subshift of sub-linear complexity (\cref{cor:SOEsubLinComp}).   
We also exhibit, thanks to recognizability results,  a rigid dynamical property of topological rank two systems: they have only one asymptotic component (\cref{cor:ACRank2}). It follows, by using arguments developed in \cite{Donoso&Durand&Maass&Petite:2016}, that their automorphism groups are always trivial, {\it i.e.} generated by powers of the shift map.

\subsection{Organization} 
In the next section we give the basic background in topological and symbolic dynamics needed in this article. \cref{sec:recog-factorizing} is devoted to review and study recognizability properties of a sequence of morphims. The equivalence between finite topological rank and $\cS$-adic subshifts is established in 
\cref{section:finiterankvsSadic} and in \cref{sec:StrongDec} we show that non-superlinear subshifts have a finite topological rank. 
We investigate the complexity of $\cS$-adic subshifts in \cref{sec:ComplexityFR} and provide conditions to have a non-superlinear/linear complexity. 
In the last but one section  we analyze topological properties of topological rank two subshifts, focusing on the number of asymptotic components. The definitions needed are included at the beginning of this section, so that the interested reader can read it independently of the others. 
We conclude with some remarks and questions in the last section.

In this article, we let $\N, \N^{\ast}$ and $\Z$ denote the set of non-negative integers, the set of positive integers and the set of integers numbers respectively. 


\section{Basics in topological and symbolic dynamics}
\label{sec:Basics}

\subsection{Basics in topological dynamics}\label{subsec:topdyn}
A {\em topological dynamical system} (or just a system) is a pair $(X,T)$ where $X$ is a compact metric space and  $T\colon X \to X$ is a homeomorphism.  We denote by $\text{Orb}_T(x)$ the orbit $\{T^nx : n\in \Z\}$ of $x \in X$. A point $x\in X$ is {\em periodic} if $\text{Orb}_T(x)$ is a finite set and {\em aperiodic} otherwise. A topological dynamical system is {\em aperiodic} if any point $x\in X$ is aperiodic and is {\em minimal} if the orbit of every point is dense in $X$.

A {\em factor map} between the topological dynamical systems $(X,T)$ and $(Y,S)$ is a continuous onto map $\pi\colon X\to Y$ such that $\pi\circ T=S\circ \pi$. We say that $(Y,S)$ is a {\em factor} of $(X,T)$ and that $(X,T)$ is an {\em extension} of $(Y,S)$. We use the notation $\pi\colon (X,T)\to (Y,S)$ to indicate the factor map. If in addition $\pi$ is a bijective map we say that $(X,T)$ and $(Y,S)$ are {\em topologically conjugate} and we call $\pi$ a topological conjugacy. 

\subsection{Basics in symbolic dynamics}\label{subsec:symbdyn}

\subsubsection{Subshifts and their languages} 

Let ${\mathcal A}$ be a finite set that we call {\em alphabet}. Elements in ${\mathcal A}$ are called {\em letters} or {\em symbols}. The set of finite sequences or {\em words} of length $\ell\in \N$ with letters in $\mathcal A$ is denoted by ${\mathcal A}^\ell$, the set of onesided sequences $(x_n)_{n\in \mathbb{N}}$  in ${\mathcal A}$ is denoted  by ${\mathcal A}^{\mathbb N}$
and the set of twosided sequences $(x_n)_{n\in \mathbb{Z}}$  in ${\mathcal A}$ is denoted by ${\mathcal A}^{\mathbb Z}$. Also, a word $w= w_1 \cdots w_{\ell} \in  {\mathcal A}^\ell$ can be seen as an element of the free monoid ${\mathcal A}^*$ endowed with the operation of concatenation. 
The integer $\ell$ is the {\em length} of $w$  and is denoted by $|w|=\ell$.
For two finite words $u$ and $v$, we say they are respectively a {\em prefix} and a {\em suffix} of the word $uv$.

The {\em shift map} $S \colon {\mathcal A}^{\mathbb Z} \to {\mathcal A}^{\mathbb Z}$ is defined by $S ((x_n)_{n\in \mathbb{Z}}) = (x_{n+1})_{n\in \mathbb{Z}}$. To simplify notations we denote the shift map by $S$ independently of the alphabet, this will be clear from the context. A {\em subshift} is a topological dynamical system $(X,S)$ where $X$ is a closed and $S$-invariant subset of ${\mathcal A}^{\mathbb Z}$ (we consider the product topology in ${\mathcal A}^{\mathbb Z}$). Classically one identifies $(X,S)$ with $X$, so one says that $X$ itself is a subshift. When we say that a sequence $x$ in a subshift is aperiodic, we implicitly mean that $x$ is aperiodic for the action of the shift.  
For convenience, when we state general results about topological dynamical systems we use the notation $(X,T)$ and to state specific results about subshifts we use $(X,S)$. 

Let $(X,S)$ be a subshift. The {\em language} of $(X,S)$ is the set 
${\mathcal L}(X)$ containing all words $w \in {\mathcal A}^*$ such that 
$w=x_m \cdots x_{m+|w|-1} = x_{[m,m+|w|)}$ for some $x=(x_n)_{n\in\Z} \in X$ and $m\in \Z$. 
In this case we say that $w$ {\em appears} or {\em occurs} in the sequence $x$ and that the index $m$ is an {\em occurrence} of $w$ in $x$. 
We also say that $w$ {\em starts} (resp. {\em ends}) in a word $u$ whenever $u = x_{[m-i, m+j)}$ (resp. $u = x_{[m+|w|-1-i, m+|w|-1+j)}$) for some indices $i \ge 0$, $j >0$. We use  the same name for the similar notions for finite words $x$. 
We denote by ${\mathcal L}_\ell (X)$ the set of words of length $\ell$ in ${\mathcal L} (X)$ and by ${\mathcal L}_{\geq\ell} (X)$ the set of words of length greater than or equal to $\ell$ in ${\mathcal L}(X)$. Given $x \in X$ the language ${\mathcal L}(x)$ is the set of all words appearing in $x$. As before we define ${\mathcal L}_{\ell} (x)$ and ${\mathcal L}_{\geq\ell} (x)$.
For two words $u,v \in {\mathcal L} (X)$, the {\em cylinder set} $[u.v]$ is the set $\{x \in X: x_{[-|u|,|v|)} = uv\}$. When $u$ is the empty word we only write $[v]$, erasing the dot. Cylinder sets are {\em clopen sets} (closed and open sets). They form a base for the topology of the subshift. 

A word $w\in {\mathcal L} (X)$ is said to be {\em right special} if there exist at least two distinct letters $a$ and $b$ such that $wa$ and $wb$ belong to ${\mathcal L}(X)$. In the same way we define {\em left special} words. We denote by $\mathcal{LS}(X)$ and $\mathcal{RS}(X)$ the set of left and right special words in $X$ respectively. It is well known that for any infinite subshift $X$ admits left and right special words of any length.

The non decreasing map $p_X \colon {\mathbb N} \to {\mathbb N}$ defined by $p_X (n) = |{\mathcal L}_n (X)|$ is called the {\em complexity function} of $X$. If  $\limsup_{n\to +\infty} {p_X(n)}/{n} < +\infty$ we say that $X$ has {\em sub-linear} complexity. Equivalently, if $p_X(n)\leq cn$ for all $n\in \N$ for some positive constant $c$. 
We say that $X$ has {\em non-superlinear} complexity if $\liminf_{n\to +\infty} {p_X(n)}/{n} < +\infty$. In the contrary, we say $X$ has {\em superlinear} complexity.   

\subsubsection{Morphisms and substitutions}

For an alphabet $\cA$, let $\cA^{\ast}$ denote the free monoid generated by concatenations of letters in $\cA$. 
The neutral element is called the \emph{empty word}.
Let $\cA$ and $\cB$ be finite alphabets and $\tau \colon \cA^{\ast} \to \cB^*$ be a morphism.  
We say that $\tau$ is {\em erasing} whenever there exists $a\in \cA$ such that $\tau (a)$ is the empty word. Otherwise we say it is {\em non-erasing}. 
The morphism $\tau$ is {\em proper}  
if each word $\tau(a)$ starts and ends with the same letter independently of $a$. 
When it is non-erasing it extends naturally to maps from $\cA^\mathbb{N}$ to itself and  from $\cA^\mathbb{Z}$ to itself  in the obvious way by concatenation (in the case of a twosided sequence we apply $\tau$ to positive and negative coordinates separately and we concatenate the results at the coordinate zero). To the morphism $\tau$ we associate an {\em incidence matrix} $M_\tau$ indexed by $\cA\times \cB$ such that its entry at position $(a,b)$ is the number of occurrences of $b$ in $\tau(a)$ for every $a \in \cA$ and $b \in \cB$. When all the entries of $M_{\tau}$ are positive, we say that $\tau$ is {\em positive}.
Note that any function $\tau\colon \cA\to \cB^{\ast}$ can be naturally extended  
to a morphism (that we also denote $\tau$) from $\cA^{\ast}$ to $\cB^{\ast}$ by concatenation, and we use this convention throughout the document. So from now on, all functions between finite alphabets are considered to be morphisms between their free monoids.

\subsubsection{$\cS$-adic subshifts}\label{sec:Sadicsubshifts}

We recall the definition of a $\cS$-adic subshift as stated in \cite{Berthe&Steiner&Thuswaldner&Yassawi:2018}.
A {\em directive sequence}  $\boldsymbol{\tau} = (\tau_n \colon \mathcal{A}_{n+1}^{\ast}\to \mathcal{A}_n^*)_{n \geq 0}$ is a sequence of morphisms. From now on, we only consider morphisms that are non-erasing. When all morphisms $\tau_n$, $n\geq 1$, are proper we say that $\boldsymbol{\tau}$ is {\em proper} and when 
all incidence matrices of $\tau_n$, $n\geq 1$, are positive, we say that $\boldsymbol{\tau}$ is {\em positive}. Here we stress the fact that in the definition there is no assumption (more that non-erasingness) on the first morphism $\tau_0$, since in many cases of interest this morphism is not proper nor positive. 

For $0\leq n<N$, we denote  by  $\tau_{[n,N)}$ or $\tau_{[n,N-1]}$, the morphism $\tau_n \circ \tau_{n+1} \circ \dots \circ \tau_{N-1}$.
We say $\boldsymbol{\tau}$ is {\em primitive} if for any $n\in \N$ there exists $N>n$ such that 
$M_{\tau_{[n,N)}} >0$, {\em i.e.}, $\tau_{[n,N)}(a)$ contains all letters in $\cA_{n}$ for all $a\in \cA_{N}$.

For $n\in \N$, the \emph{language $\mathcal{L}^{(n)}({\boldsymbol{\tau}})$ of level $n$ associated 
with~$\boldsymbol{\tau}$} is defined by 
$$
\mathcal{L}^{(n)}({\boldsymbol{\tau}}) = \big\{w \in \mathcal{A}_n^* : \ \mbox{$w$ occurs in $\tau_{[n,N)}(a)$ for some $a \in\mathcal{A}_N$ and $N>n$}\big\}
$$
and $X_{\boldsymbol{\tau}}^{(n)}$ is the set of points $x \in \mathcal{A}_n^\mathbb{Z}$ such that $\cL (x) \subseteq \mathcal{L}^{(n)}({\boldsymbol{\tau}})$. This set clearly defines a subshift 
that we call the {\em subshift generated by~$\mathcal{L}^{(n)}({\boldsymbol{\tau}})$}. It may happen that 
$\cL(X_{\boldsymbol{\tau}}^{(n)})$ is strictly contained in $\mathcal{L}^{(n)}({\boldsymbol{\tau}})$, but in the primitive case both sets coincide and $X_{\boldsymbol{\tau}}^{(n)}$ is a minimal subshift. 
By definition we have $\tau_n(\cL(X_{\boldsymbol{\tau}}^{(n+1)})) = \cL(X_{\boldsymbol{\tau}}^{(n)})$.
We set $X_{\boldsymbol{\tau}} = X_{\boldsymbol{\tau}}^{(0)}$ and call $(X_{\boldsymbol{\tau}},S)$ the \emph{$\cS$-adic subshift} generated by the {directive sequence}~$\boldsymbol{\tau}$.  

We define the {\em alphabet rank} of $\boldsymbol{\tau}$ as 
$$AR(\boldsymbol{\tau}) = \liminf_{n\to +\infty} |  \mathcal{A}_n|.$$
When $AR (\boldsymbol{\tau})$ is finite this means $(X_{\boldsymbol{\tau}},S)$ can be defined by a directive sequence where the morphisms are endomorphisms on a free monoid with  $AR(\boldsymbol{\tau})$ generators. 

We define the  {\em alphabet rank} of a subshift  $X$ (or $(X ,S)$) to be 
$$
AR (X) = \inf_{\boldsymbol{\tau},X_{\boldsymbol{\tau}} = X} AR (\boldsymbol{\tau}),
$$
where the infimum runs over all sequences of morphisms $\boldsymbol{\tau}$ such that $X_{\boldsymbol{\tau}} = X$. As a convention the infimum is equal to 
 $+\infty$ when this set is empty.

\subsection{Bratteli-Vershik systems}\label{subsec:Bratteli-Vershik}

Let $(X,T)$ be a minimal Cantor system, {\em i.e.}, $X$ is a Cantor set (has a countable basis of closed and open sets and it has no isolated points). It can be represented by an ordered Bratteli diagram together with the Vershik transformation acting on it. We give a brief outline of this construction, for details on this theory see \cite{hps} or \cite{review}.

\subsubsection{Bratteli diagrams}\label{subsec:BratteliDiagrams}
A Bratteli diagram is an infinite graph which consists of a vertex set $V$ 
and an edge set $E$, both of
which are divided into levels $V=V_{0}\cup V_{1}\cup \ldots$ and $E=E_{1}\cup E_{2}\cup \ldots$, 
where all levels are pairwise disjoint. 
The set $V_{0}$ is a singleton $\{v_{0}\}$ and for all $n\geq 1$ edges in $E_{n}$ join vertices in $V_{n-1}$ to vertices in $V_{n}$. If $e\in E$ connects $u\in V_{n-1}$ with $v \in V_n$ we write $\source(e)=u$ and $\range(e)=v$, where $\source\colon E_n\to V_{n-1}$ and $\range\colon E_n\to V_{n}$ are the source and range maps, respectively. It is also required that $\source^{-1}(v)\not = \emptyset$ for all $u \in V$ and that $\range^{-1}(v)\not = \emptyset$ for all $v \in V\setminus V_0$.
For all $n\geq 1$ we set $|V_{n}|=d_{n}$ and we write $V_n=\{1,\ldots,d_n\}$ to simplify notation.   

Fix $n\geq 1$. We call \emph{level} $n$ of the diagram the subgraph consisting of the vertices in $V_{n-1}\cup V_{n}$ 
and the edges $E_{n}$ between these vertices. Level $1$ is called the \emph{hat} of the Bratteli diagram. 
We describe the edge set $E_n$ using a $V_{n}\times V_{n-1}$ incidence matrix $M_{n}$ for which its $(v,u)$ entry is the number of edges in $E_{n}$ joining vertex $u \in V_{n-1}$ with vertex $v \in V_{n}$. 
We also consider several levels at the same time. 
For integers $0 \leq n< N$ we denote by $E_{n,N}$ the set of all paths in the graph joining vertices of  
$V_{n}$ with vertices of $V_{N}$. 

A Bratteli diagram $(V,E)$ is \emph{simple} or {\em primitive} if for any $n\geq 1$ there exists $N > n$ such that each pair of vertices $u\in V_n$ and $v \in V_N$ is connected by a finite path.

\subsubsection{Ordered Bratteli diagrams and Bratteli-Vershik representations}\label{sec:BVrepresentation}

An \emph{ordered} Bratteli diagram is a triple \( B=\left( V,E,\preceq \right) \), where \( \left( V,E\right)  \) is a Bratteli diagram and \( \preceq  \) is a partial ordering on \( E \) such that: edges
\( e \) and \( e' \) in $E$ are comparable if and only if $\range(e)=\range(e')$.
This partial ordering naturally defines maximal and minimal edges. The partial ordering of $E$ induces another one on paths of $E_{n,N}$ for all $0 \leq n < N$:  
$\left(e_{n+1},\ldots,e_{N}\right) \preceq \left(f_{n+1},\ldots ,f_{N}\right)$ if and only if 
there is $n+1\leq i\leq N$ such that $e_{i}\preceq f_{i}$ and $e_{j}=f_{j}$ for $i<j\leq N$.

Given a strictly increasing sequence of integers 
$\left(n_{k}\right)_{k\geq 0}$ with $n_{0}=0$ one defines the \emph{contraction} or \emph{telescoping} of
$B=\left(V,E,\preceq \right)$ with respect to $\left(n_{k} \right)_{k\geq 0}$ by 
$$\left(\left(V_{n_{k}}\right)_{k\geq
0},\left( E_{n_{k},n_{k+1}}\right)_{k\geq 0},\preceq \right), $$ where $\preceq$ is the order induced in each set of edges 
$E_{n_{k},n_{k+1}}$. The converse operation is called {\em microscoping} (see \cite{hps} and \cite{Giordano&Putnam&Skau:1995} for more details).

Given an ordered Bratteli diagram \( B=\left( V,E,\preceq \right) \) one defines \( X_{B} \) as the set of infinite paths \( \left(
x_{1},x_{2},\ldots \right)  \) starting in \( v_{0} \) such that $\range(x_n)=\source(x_{n+1})$
for all \( n\geq 1 \). We topologize \( X_{B} \) by postulating a basis of open sets, namely the family of \emph{cylinder sets}
$$
\left[ e_{1},e_{2},\ldots ,e_{n}\right]
=\left\{ \left( x_{1},x_{2},\ldots \right) \in X_{B} \textrm{ } : \textrm{ }
x_{i}=e_{i},\textrm{ for }1\leq i\leq n\textrm{ }
\right\}.$$

Each \( \left[ e_{1},e_{2},\ldots ,e_{n}\right]  \) is also closed, as is
easily seen, and so \( X_{B} \) is a compact, totally disconnected metrizable space. 
If $(V,E)$ is simple and $X_B$ infinite then \( X_{B} \) is Cantor.

When there is a unique point  \( \left( x_{1},x_{2},\ldots \right) \in X_{B} \) such that \( x_{n} \) is (locally) maximal for any \( n\geq 1 \) and a unique
point \( \left( y_{1},y_{2},\ldots \right) \in X_{B} \) such that \( y_{n} \) is (locally) minimal for any \(n \geq 1 \), one says that \( B=\left(
V,E,\preceq \right)  \) is a \emph{properly ordered} Bratteli diagram. We call these particular points \( x_{\mathrm{max}} \) and \(
x_{\mathrm{min}} \) respectively. In this case, we define the map \( V_{B} \) on \( X_{B} \) called the \emph{Vershik map} as follows. Let \( x=\left( x_{1},x_{2},\ldots \right) \in X_{B}\setminus \left\{ x_{\mathrm{max}}\right\}  \) and let \(
n\geq 1 \) be the smallest integer so that \( x_{n} \) is not a maximal edge. Let \( y_{n} \) be the successor of \( x_{n} \) for the corresponding local order and \( \left(
y_{1},\ldots ,y_{n-1}\right)  \) be the unique minimal path in \( E_{0,n-1} \) connecting \( v_{0} \) with the initial vertex of \( y_{n} \). We
set \( V_{B}\left( x\right) =\left( y_{1},\ldots ,y_{n-1},y_{n},x_{n+1},\ldots \right)  \) and \( V_{B}\left( x_{\mathrm{max}}\right)
=x_{\mathrm{min}} \).

The couple \( \left( X_{B},V_{B}\right)  \) is a topological dynamical system called the \emph{Bratteli-Vershik system} generated by \( B=\left( V,E,\preceq \right)
\). The dynamical system induced by any telescoping of \( B \) is topologically conjugate to \( \left( X_{B},V_{B}\right)  \). 

In \cite{hps} it is proved that the system $\left( X_{B},V_{B}\right)$ is minimal whenever
the associated Bratteli diagram $(V,E)$ is simple. Conversely, it is also proved that any minimal Cantor system $\left( X,T\right)$ is topologically conjugate to a Bratteli-Vershik system \( \left(X_{B},V_{B}\right)  \) where $(V,E)$ is simple. 
We say that a simple and properly ordered Bratteli diagram $B=(V,E,\preceq)$ is a \emph{Bratteli-Vershik representation} of the minimal Cantor system $\left( X,T\right)$ 
if $(X,T)$ and $\left( X_{B},V_{B}\right)$ are topologically conjugate. We also say that $(X,T)$ is given by the Bratteli-Vershik representation $B$. 

Let $B=( V, E ,\preceq )$ be an ordered Bratteli diagram. We associate different subshifts to the dynamics of $(X_{B}, V_{B})$.
First, for each $n \ge 1$ set 
$$X_B^{(n)}=
\{ (\range(\pi_{n}(V_{B}^{k}x)))_{k\in \Z}: x=(x_i)_{i\geq 1} \in X_{B} \} \subseteq V_{n}^{\Z},$$ 
where 
$\pi_{n}: X_B \to E_{n}$ is the projection map given by $\pi_{n}(x)=x_{n}$. We call this subshift the {\em coding of the orbits of} $(X_B,V_B)$ {\em at level} $n$. The subshift $(X_B^{(n)},S)$ is minimal whenever the system $(X_B, V_B)$ is.
Now, let 
$X_B^{(0)}= \{ (\pi_{1}(V_{B}^{k}x))_{k\in \Z} :  x \in X_{B} \} \subseteq E_{1}^{\Z}$. Denote by $\phi_{0}: X_B \to X_B^{(0)}$ the natural factor map between $(X_B,V_B)$ and $(X_B^{(0)},S)$. 
It is clear that the subshifts $X_B^{(0)}$ and $X_B^{(1)}$ coincide, up to the alphabets, whenever there is a unique edge between the vertex in $V_0$ and each vertex in $V_1$. 

Define $\tau_0^B:V_1^* \to E_1^*$ by 
$\tau^B_0(v)=e_1(v)\ldots e_\ell(v)$, where the edges $e_1(v),\ldots,e_\ell(v)$ correspond to those connecting $V_0$ with $v$ and are ordered following the partial order of $B$. 
In a similar way, for $n\geq 1$ consider the morphisms 
$\tau_n^{B} \colon V_{n+1}^*\to V_{n}^*$ defined for a vertex $v\in V_{n+1}$ by $\tau_n^{{B}} (v) = \source(e_1 (v)) \cdots \source(e_l (v))$, where  edges $ e_1 (v), \ldots , e_l (v)$ are all the edges in $E_{n+1}$ whose range is $v$ ordered as indicated by the partial order of $B$. 
We say $\tau_n^B$ is the morphism {\em read at level} $n+1$ on $B$ and that $\boldsymbol{\tau}^B = (\tau_{n}^B)_{n\geq 0}$ is the {\em sequence of morphisms read on} $B$. Following these definitions, an ordered Bratteli diagram $B$ defines a unique sequence of morphisms $(\tau^B_n \colon \cA_{n+1}^* \to \cA_{n}^*)_{n\geq 0}$ with $\cA_0= E_1$ and $\cA_n=V_{n}$ for $n\ge 1$.  
In the sequel it will be useful to observe that for the first morphism $\tau_0^{B}$ all the letters occurring in $\tau^B_0 (v)$, $v\in V_1$, are pairwise distinct. 

The sequence $\boldsymbol{\tau}^B$ is primitive whenever the Bratteli diagram is simple.   
Observe moreover  that $\tau_{n}^{B}(X_B^{(n+1)})$ is included in $X_B^{(n)}$,
and  $\cup_{k\in \mathbb{Z}} S^k \tau_{n}^{B}(X_B^{(n+1)})=   X_B^{(n)}$.
Hence $X_B^{(0)}$ is the $\cS$-adic  subshift generated by $\boldsymbol{\tau}^B$.

It is standard to check that the unicity of the maximal and minimal paths in a properly ordered Bratteli diagram does imply, up to telescoping, that the morphisms $\tau_n^B$ are proper for all $n\geq 1$.  

\subsubsection{Kakutani-Rohlin partitions and Bratteli-Vershik representations}
The construction of a Bratteli-Vershik representation for a minimal Cantor system $(X,T)$ relies on the notion of {\em clopen Kakutani-Rohlin partition (CKR partition)}. A CKR partition $\cT$ of $X$ is given by: 
\begin{equation}
\label{eq:CKR-partition}
\cT =\{T^{-j}B(k): \  1\leq k \leq d , \ 0 \leq j< h(k) \},
\end{equation}
where $d,h(1),\ldots,h(d)$ are positive integers and 
$B(1),\ldots, B(d)$ are clopen subsets of $X$ such that 
$$\bigcup_{k=1}^d T^{-h(k)} B(k)= \bigcup_{k=1}^d B(k).$$
The {\em base} of $\cT$ is the set $B({\cT})=\bigcup_{1\leq k\leq d}B(k)$ and the 
set $\cT(k)=\{T^{-j}B(k): \ 0 \leq j< h(k) \}$ is the {\em $k$-th tower} of $\cT$ and $h(k)$ its {\em height}.
\smallskip

To construct a Bratteli-Vershik representation of $(X,T)$ we need a sequence of {\em nested} CKR partitions.
For each $n\geq 0$ let
\begin{equation}
\label{eq:def-seq-KR}
\cT_n=
\{
T^{-j}B_{n}(k): \ 1\leq k \leq d_n,\ 0\le j <h_{n}(k)
\}
\end{equation}
such that $\cT_0$ is the
trivial partition, ({\em i.e.}, $d_0=1$, $h_0(1)=1$ and $B_0(1)=X$). 
We say that the sequence of CKR partitions $(\cT_n)_{n\geq 0}$ is \emph{nested} if for any $n\geq 0$: 
\begin{enumerate}[label=(KR\arabic*)]
\item
\label{item:KR1}
$B(\cT_{n+1}) \subseteq B(\cT_n)$;
\item
\label{item:KR2}
$\cT_{n+1} \succeq \cT_n$, {\em i.e.}, for any atom $A \in \cT_{n+1}$
there exists an atom  $A^{'}\in \cT_n$ such that $A\subseteq A^{'}$;
\item
\label{item:KR3}
$\bigcap_{n\geq 0} B(\cT_n)$ consists of a unique point;
\item
\label{item:KR4}
the sequence of partitions spans the topology of $X$.
\end{enumerate}
\smallskip
These conditions imply in particular that for all $n\geq 1$ and $1 \leq k \leq d_{n+1}$
all points in $T^{-(h_{n+1}(k)-1)}B_{n+1}(k)$ (the {\em roof} of the $k$-th tower) iterated by $T$ cross completely and in the same order several towers of level $n$ until reaching $B_{n+1}(k)$. Recursively one has that 
for all $1 \leq n < N$ and $1 \leq k \leq d_{N}$ all points in $T^{-(h_{N}(k)-1)}B_{N}(k)$ iterated by $T$ cross completely, and in the same order, several towers of level $n$ until reaching the base $B_{N}(k)$.
Not necessarily all towers of level $n$ are visited. We say that the nested sequence of CKR partitions 
$(\cT_n)_{n\geq 0}$ is {\em primitive} if for all $n\geq 1$ there exists $N>n$ such that for all $1 \leq k \leq d_{N}$ all points in $T^{-(h_{N}(k)-1)}B_{N}(k)$ iterated by $T$ cross completely and in the same order all towers of level $n$ until reaching the base $B_{N}(k)$. 

Given a primitive sequence of nested CKR partitions of $(X,T)$ it is possible to construct a Bratteli-Vershik representation of $(X,T)$ and vice versa. We will briefly describe the construction of the representation and the converse will be clear from this construction too. First, define the associated ordered Bratteli diagram (we follow notations of Section \ref{subsec:BratteliDiagrams}). Fix $n\geq 0$. The vertices $V_n$ will represent the towers in $\cT_n$ so we set $V_n=\{1,\ldots,d_n\}$, in particular $V_0=\{1\}$. In the set of edges $E_{n+1}$ we put an arrow from vertex $u \in V_n$ to vertex $v \in V_{n+1}$ each time the tower $u$ of level $n$ is crossed by points in tower $v$ of level $n+1$ as discussed before. The local order at the vertex $v \in V_{n+1}$ is given by the order points in $B_{n+1}(v)$ cross the towers of level $n$ when iterated by $T^{-1}$. In this way we have defined an ordered Bratteli diagram. By construction and primitivity assumption the constructed diagram is simple. The fact that it is properly ordered follows from \ref{item:KR3}. Indeed, the unique maximal point $x_{\mathrm{max}}$ is given by \ref{item:KR3} and the unique minimal point
$x_{\mathrm{min}}$ is its image by $T$. The conjugacy between $(X_B,V_B)$ and $(X,T)$ is described as follows. A point in $x\in X$ is determined uniquely by the intersection 
$\{x\}=\cap_{n\geq 0} T^{-j_{n,x}}B_n(u_{n,x})$, where $1\leq u_{n,x} \leq d_n,\ 0\le j_{n,x} <h_{n}(u_{n,x})$. Then for each $n\geq 0$ we select the edge $e_{n+1}$ with extremities $u_{n,x} \in V_n$ and $u_{n+1,x} \in V_{n+1}$ whose local order corresponds to the apparition of the $u_{n,x}$-th tower of level $n$ containing 
$T^{-j_{n+1,x}}B_{n+1}(u_{n+1,x})$. The map $x \to (e_1,e_2,\ldots)$ is a conjugacy.  
Thus we have defined a Bratteli-Vershik representation for $(X,T)$. 


\subsubsection{Minimal Cantor systems of finite topological rank}

A minimal Cantor system is of finite topological rank if it admits a Bratteli-Vershik representation such that the number of vertices per level is uniformly bounded by some integer $d$. The minimum possible value of $d$ is called the \emph{topological rank} of the system. 

Equivalently (due to previous section), a minimal Cantor system is of finite topological rank if it admits a primitive sequence of nested CKR partitions such that the number of towers per level is uniformly bounded by some $d\geq 1$. When $d=1$ the system is called an {\em odometer}. 

A fundamental result concerning finite topological rank systems states the following dicothomy.

\begin{theo}{\cite{Downarowicz&Maass:2008}}\label{DowMaass}
A finite topological rank minimal Cantor system is either an odometer or expansive (so a subshift).
\end{theo}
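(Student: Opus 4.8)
The plan is to reduce the dichotomy to a statement about expansiveness and then to exploit the bounded number of towers. Since the two alternatives are mutually exclusive (an odometer, being equicontinuous and infinite, is never expansive), it suffices to prove that a finite topological rank minimal Cantor system $(X,T)$ which is \emph{not} an odometer is expansive. First I would fix a Bratteli--Vershik representation $B=(V,E,\preceq)$ with $|V_n|\le d$, and telescope it so that $|V_n|$ equals a constant $d'$ for all $n\ge 1$, every morphism $\tau_n^B$ with $n\ge 1$ is proper, and, by primitivity, positive. I would then work with the factor maps $\phi_n\colon (X_B,V_B)\to (X_B^{(n)},S)$ and the relations $R_n=\{(x,y):\phi_n(x)=\phi_n(y)\}$, so that $(x,y)\in R_n$ means $V_B^kx$ and $V_B^ky$ lie in the same level-$n$ tower for every $k\in\Z$, and $R_n=\Delta$ is equivalent to $\phi_n$ being injective.

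The easy step is that $R_n=\Delta$ for some $n$ already yields expansiveness: an injective $\phi_n$ is a continuous, shift-equivariant bijection from a compact space, hence a topological conjugacy of $(X,T)$ onto the subshift $(X_B^{(n)},S)$, and subshifts are expansive. The whole weight of the theorem is therefore carried by the converse in the non-equicontinuous case: \textbf{if $(X,T)$ is not an odometer, then $\phi_n$ is injective for some $n$.} The hypothesis is genuinely needed, since for an odometer one has $d'=1$, every name $\phi_n(x)$ is constant, and $R_n=X\times X$ for every $n$.

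For the heart of the proof I would show that, past some level $N$, the level-$n$ tower structure is \emph{two-sidedly recognisable}, and use this to prove $\phi_N$ injective. The point is that a point crosses the level-$n$ tower of index $a$ in exactly $h_n(a)$ consecutive steps before leaving it, so $\phi_n(x)$ is a bi-infinite concatenation of blocks $a^{h_n(a)}$, and more globally of the words $\tau_n^B(v)$, $v\in V_{n+1}$; recognisability means these block boundaries can be located from the symbol sequence alone, within a finite look-around. Granting this, I would argue by induction on the level: if $\phi_N(x)=\phi_N(y)$, then recognisability at level $N$ places the block boundaries identically, so $x$ and $y$ share the same $\cT_N$-atom at every time; parsing the resulting level-$N$ visit sequence with the recognisability of $\tau_N^B$ gives $\phi_{N+1}(x)=\phi_{N+1}(y)$; iterating yields $\phi_M(x)=\phi_M(y)$ for all $M\ge N$, whence $x=y$ because the CKR atoms separate points. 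Thus $\phi_N$ is injective and $(X,T)$ is expansive.

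The hard part will be precisely this uniform recognisability, established from the bounded rank alone. The obstacle is that a single tower index may be repeated over arbitrarily long stretches, so that block boundaries cannot be read off locally: one must combine the non-odometer hypothesis and minimality (which make index changes recur in both directions, providing anchors for the parsing), properness (which controls how consecutive blocks abut), and a pigeonhole over the boundedly many tower-words $\tau_n^B(v)$ to show that such phase ambiguities are resolved within a finite look-around that does not blow up with the level. This is exactly the recognisability phenomenon treated systematically in the later sections and in \cite{Berthe&Steiner&Thuswaldner&Yassawi:2018}; once it is in hand, every other step in the argument is soft by comparison.
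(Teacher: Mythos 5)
There is a genuine gap. First, note that the paper does not prove this theorem at all: it is quoted from \cite{Downarowicz&Maass:2008}, and the only thing the paper says about its proof (in Section~\ref{sec:recog-factorizing}) is that the original argument is by contradiction, reducing to the situation where a single aperiodic point admits more than $|\cW|^{|\cW|}$ pairwise disjoint $\cW$-factorizations for some finite set of words $\cW$, and that this count can be replaced by $|\cW|+1$ using \cref{theo:Wfact}. Your outline is a different, ``modern'' route: reduce to showing that a non-odometer system has an injective level coding $\phi_N$, and obtain that injectivity from recognizability of the tower structure. That reduction is sound in outline (not an odometer forces some level subshift to be infinite, hence aperiodic by minimality, and an injective $\phi_N$ is a conjugacy onto a subshift), but it relocates the entire content of the theorem into the claim you label ``the hard part'' and then do not prove. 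Deferring it to \cite{Berthe&Steiner&Thuswaldner&Yassawi:2018} is not a resolution: as the paper itself notes, the eventual-recognizability theorem there (\cref{theo:bsty}) is proved by following the same steps as the original Downarowicz--Maass argument, so you are citing a result whose proof is the very argument you were asked to supply.

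Concretely, what is missing is the counting mechanism that makes bounded rank bite: one must show that if the block boundaries at level $n$ cannot be recovered from the level-$n$ name within a finite window for infinitely many $n$, then some fixed aperiodic point acquires arbitrarily many pairwise disjoint $\cW$-factorizations for a word set $\cW$ of cardinality bounded by the rank, contradicting the defect-theorem bound of \cref{theo:Wfact} (or the ``infection lemma''). Your appeal to ``a pigeonhole over the boundedly many tower-words'' gestures at this but never engages with the actual difficulty, namely producing and separating the competing factorizations and showing they are disjoint. A secondary imprecision: the recognizability you need is not literally that of the morphisms $\tau_n^B$ read on the diagram, but of their height-expanded versions $v \mapsto v^{h_n(v)}$ composed with them, since $\phi_n(x)$ records one symbol per time step (a crossing of tower $v$ contributes a run $v^{h_n(v)}$), not one symbol per crossing; the parsing ambiguity inside such runs is exactly where periodicity must be excluded, and your induction step conflates the two codings. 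As written, the proposal is a correct reduction and a plan, not a proof.
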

It is important to notice that in the expansive case the Cantor system is topologically conjugate to one of the level subshifts $(X_B^{(n)},S)$ associated to a Bratteli-Vershik representation $B$ of finite topological rank.  

For other proofs of this result and extensions to non minimal contexts see \cite{Bezuglyi&Kwiatkowski&Medynets:2008,Hoynes:2017,Berthe&Steiner&Thuswaldner&Yassawi:2018,Shimomura:2017}.

\section{Recognizability for morphisms and languages}
\label{sec:recog-factorizing}

The notion of recognizability in symbolic dynamics is mainly known because of the result by B. Moss\'e stating that all primitive substitutions or the corresponding subshifts are recognizable \cite{Mosse:1992}. A possible way to state this property is that points in a substitutive subshift can be uniquely factorized using words of a given finite set or the same for families of finite sets of words. In the case of substitutions those families correspond to the words produced by iterating letters. This notion seen in this way can be naturally extended to many other systems, in particular to $\cS$-adic subshifts \cite{Berthe&Steiner&Thuswaldner&Yassawi:2018} or subshifts defined from a Bratteli-Vershik system \cite{Downarowicz&Maass:2008}. It is worth mentioning that Theorem \ref{DowMaass} can be seen as a recognizability result. Now we precise the notions of factorization and recognizability we will need in this article. For definitions of recognizability we mainly follow \cite{Berthe&Steiner&Thuswaldner&Yassawi:2018}. 

\subsection{Factorization of sequences}\label{sec:FactSeq} 
Let $\cA$ be a finite alphabet and $\cW$ be a finite set of non-empty words in $\cA^*$. 
A $\cW$-{\em factorization} of $x\in \cA^\Z$ is a map 
$F_x \colon \Z \to \cW \times \Z$ such that:
\begin{itemize}
\item for each $k \in \Z$, if  $F_x(k)=(w,i)$ and $F_x(k+1)=(w',j)$ then $x_{[i,j)}=w$;
\item if $F_x(0) = (w,i)$ and $F_x(1)=(w',j)$ then $i\leq 0 < j$.
\end{itemize}
For $F_x(k)=(w,i)$ we say $i \in \Z$ is a {\em starting position} of $w$ in $x$ for $F_x$. The starting positions are also called {\em cuts} (see \cite{Berthe&Steiner&Thuswaldner&Yassawi:2018,Downarowicz&Maass:2008}). In other words, $x$ admits a $\cW$-factorization if there exists a strictly increasing sequence of integers $(n_k)_{k\in \Z}$ such that:  $n_{0}\leq 0 <n_1$ and $x_{[n_k,n_{k+1})} \in \cW$ for all $k\in \Z$. Of course, this sequence corresponds to the sequence of starting positions of the factorization: $F_x (k) = (x_{[n_k,n_{k+1})} , n_k)$ for all $k\in \mathbb{Z}$.
Two factorizations $F_x$, $F'_x$ of $x$ are {\em disjoint} whenever the starting positions 
of $F_x$ and $F'_x$ are distinct, so they do not have common cuts. 

We recall a theorem that is very useful when studying factorization of finite or infinite words. It is a consequence of the so called ``defect theorems'' in language theory. We will use it to improve some results concerning recognizability of $\cS$-adic systems in the next section. 

\begin{theo}[{\cite[Corollary~1]{Karhumaki&Manuch:2002}}]
\label{theo:Wfact}
Let $\cW\subset \cA^*$ be a finite set of non-empty words and $x\in \cA^\Z$ be a non-periodic point. 
Then, $x$ possesses at most $|\cW|$ disjoint $\cW$-factorizations.
\end{theo}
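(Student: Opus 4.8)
The plan is to reformulate the statement as a bound on the number of bi-infinite tilings of $\Z$ by intervals carrying the words of $\cW$, and to prove it by a defect-theoretic induction on $|\cW|$. First I would record the elementary features of disjoint $\cW$-factorizations. Set $L=\max_{w\in\cW}|w|$. Each factorization has cuts with gaps at most $L$, hence in any window of length $W$ it has between $W/L-1$ and $W+1$ cuts; since the cut sets $C_1,\dots,C_m$ are pairwise disjoint, every integer is a cut of at most one of the $m$ factorizations, so the total number of cuts in the window is at most $W+1$. Letting $W\to\infty$ yields the crude bound $m\le L$. This already uses disjointness and bounded gaps, but it only gives the maximal word length, not the asserted cardinality $|\cW|$; the same obstruction appears for the natural injections (sending a factorization to the word, or the word-with-offset, covering position $0$) and for the density/frequency version of the count, all of which produce length-type bounds. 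So the sharp bound is genuinely a \emph{defect} phenomenon and a finer, algebraic argument is needed.

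To reach the cardinality bound I would argue by contradiction, assuming $m=|\cW|+1$ pairwise disjoint factorizations $F^1,\dots,F^m$ of the non-periodic point $x$, and run an induction on $|\cW|$. The crucial structural observation is that, since no position is a common cut, every cut of $F^j$ lies \emph{strictly inside} the $F^i$-word containing it (for $i\neq j$): the interval of an $F^i$-word is subdivided by foreign cuts. Globally this forces a nontrivial relation among the words of $\cW$ — two distinct products of words of $\cW$ represent overlapping segments of $x$ — so $\cW$ is far from being a code. The classical defect theorem then provides a set $\cW'$ with $|\cW'|<|\cW|$ such that every word of $\cW$ lies in $(\cW')^{\ast}$. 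The idea is that $x$ inherits from each $F^i$ a $\cW'$-factorization, and that refining $\cW$ to its free hull $\cW'$ decreases $|\cW'|$ strictly while costing at most one from the count of (still pairwise disjoint) factorizations. Iterating, the generating set is driven down; the degenerate end of the induction, a single generating word, would make $x$ equal to a bi-infinite power of one word, i.e.\ periodic, contradicting the hypothesis.

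The main obstacle is the quantitative bookkeeping that makes the induction close exactly at $|\cW|$ rather than at a weaker constant: one must control precisely how many disjoint factorizations survive the passage to the free hull $\cW'$, and verify that disjointness of the induced $\cW'$-cut sets is preserved (the new cuts are a subset of the old cuts, which helps). A second delicate point is the passage between the bi-infinite and finite settings needed to invoke the defect theorem cleanly: because the $C_i$ share no cut, one cannot excise a finite factor bounded by common cuts, so the relations must be produced and exploited directly on $x$ (or on long factors with truncated boundary words, reconciling partial occurrences). Finally, extracting an honest period from the terminal ``single-word'' case should be done by a compactness argument — producing longer and longer periodic factors of $x$ and passing to a limit — which is exactly where the non-periodicity assumption is consumed and the contradiction is reached.
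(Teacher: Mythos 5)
The paper offers no proof of this statement: it is quoted verbatim as Corollary~1 of Karhum\"aki and Ma\v{n}uch, so the only ``proof'' in the paper is the citation, and the real content lives in that reference. Your sketch correctly identifies the result as a defect-effect phenomenon, and your warm-up bound $m\le\max_{w\in\cW}|w|$ via disjointness of the cut sets is fine, but the passage from there to the cardinality bound $|\cW|$ has a genuine gap, and it sits exactly where you yourself locate the ``main obstacle''.

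Concretely: (i) your inductive step assumes that replacing $\cW$ by its free hull $\cW'$ (with $|\cW'|<|\cW|$) ``costs at most one'' disjoint factorization. This is not bookkeeping to be verified afterwards; it is essentially the whole theorem. What the cited reference actually proves is that $k$ disjoint factorizations of a non-periodic bi-infinite word force the combinatorial rank of $\cW$ down to at most $|\cW|-k+1$, from which the corollary is immediate since the rank is at least $1$; nothing in your outline produces this loss-of-exactly-one accounting. (ii) The parenthetical ``the new cuts are a subset of the old cuts'' is backwards: when each word of $\cW$ is rewritten over $\cW'$, the induced $\cW'$-cut set \emph{contains} the original $\cW$-cut set together with the new subdivision points, so the induced factorizations may well cease to be pairwise disjoint, and the inductive hypothesis cannot be applied to them. (iii) The classical defect theorem needs an honest finite relation $u_1\cdots u_p=v_1\cdots v_q$ with $u_i,v_j\in\cW$, which one normally extracts by truncating between two \emph{common} cuts of two factorizations; disjointness means there are no common cuts, so only the bi-infinite version of the defect theorem is available, and its conclusion is ``the rank drops \emph{or} the word is periodic''. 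The periodicity alternative therefore has to be handled at every stage of the induction, not only in the terminal one-generator case. In short, the skeleton points in the right direction, but the three items above \emph{are} the theorem, and none of them is established in the proposal.
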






It is interesting to mention that in the proof of Theorem \ref{DowMaass} one can replace the so called ``infection lemma'' by Theorem \ref{theo:Wfact}. The original proof of  Theorem \ref{DowMaass} is by contradiction, reducing the study to the case where there are more than $|\cW|^{|\cW|}$ disjoint factorizations of a given point for some finite set of non-empty words $\cW$. Using Theorem \ref{theo:Wfact} the constant $|\cW|^{|\cW|}$ can be reduced to $|\cW|+1$ answering the last question in \cite{Downarowicz&Maass:2008}. We will see in the next section that the related result, Theorem 5.1 from \cite{Berthe&Steiner&Thuswaldner&Yassawi:2018}, can also be improved using the same theorem.

\subsection{Recognizability for morphisms}\label{subsec:recogformorphisms}

Let $\tau\colon \mathcal{A}^* \to \mathcal{B}^*$ be a morphism and $x \in \mathcal{B}^\mathbb{Z}$.
If $x = S^k \tau(y)$ for some $y \in \mathcal{A}^\mathbb{Z}$ and $k \in \mathbb{Z}$, then 
$(k,y)$ is said to be a \emph{$\tau$-representation}  of $x$. 
If $y$ belongs to some $Y\subseteq \mathcal{A}^\mathbb{Z}$ we say $(k,y)$ is a $\tau$-representation of $x$ in $Y$.
Moreover, if $0 \leq k < |\tau(y_0)|$ 
then $(k,y)$ is a \emph{centered $\tau$-representation} of $x$ in $Y$. 
Following the notion introduced in \cite{Berthe&Steiner&Thuswaldner&Yassawi:2018}, we say that $\tau$ is \emph{recognizable in~$Y$} if each 
$x \in \mathcal{B}^\mathbb{Z}$ has at most one centered $\tau$-representation in $Y$. 
If any aperiodic point $x \in \mathcal{B}^\mathbb{Z}$ has at most one centered $\tau$-representation in~$Y$, we say that $\tau$ is \emph{recognizable in~$Y$ for aperiodic points}.

The following lemma is proven in \cite{Berthe&Steiner&Thuswaldner&Yassawi:2018}  using a contradiction argument that does not give any information on the constant $R$ below. We provide a different and direct proof showing the origin of such constant $R$.

\begin{lem}
\label{lem:mosserec}
Let $Y \subseteq \cA^{\Z}$ be a subshift and $\tau \colon \cA^* \to \cB^*$ be a non-erasing morphism. Let $X \subseteq \cB^\Z$ be the smallest subshift containing $\tau (Y)$.
Then, the following are equivalent:
\begin{enumerate}
\item $\tau$ is recognizable in $Y$; 
\item $\tau([a])$ is a non-empty clopen subset of $X$ for each $a\in \cA$, where $[a]=\{y=(y_n)_{n\in \Z} \in Y : y_0=a\}$;
\item \label{item:recradius} there exists a positive integer $R$ such that if $(k, y)$ and $(k', y')$ are centered $\tau$-representations of $x,x'\in X$ respectively and $x_{[-R, R)} = x'_{[-R,R)}$, then $k=k'$ and $y_0 = y'_0$.
\end{enumerate}
\end{lem}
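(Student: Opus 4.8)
The plan is to prove the cycle $(3)\Rightarrow(1)\Rightarrow(2)\Rightarrow(3)$, after two preliminary observations. First I would record that each $\tau([a])$ is automatically \emph{closed}: $[a]$ is a clopen (hence compact) subset of the compact space $Y$ and $\tau\colon Y\to X$ is continuous, so $\tau([a])$ is compact, thus closed. Consequently the whole content of (2) is that these sets are also \emph{open}. Second, I would prove a covering lemma: every $x\in X$ admits a centered $\tau$-representation in $Y$, and the finitely many clopen sets $C_{a,j}:=S^{j}\tau([a])$ (for $a\in\cA$ and $0\le j<|\tau(a)|$) cover $X$. Indeed $X=\overline{\bigcup_{k\in\Z}S^{k}\tau(Y)}$, and any point $S^{k}\tau(y)$ may be recentered so that $0$ falls inside the image of the $0$-th letter; since the block lengths $|\tau(a)|$ are bounded by $L:=\max_{a}|\tau(a)|$, a compactness argument (passing to a subsequence with constant recentering shift and constant first letter) produces a centered representation of any limit point.

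For $(3)\Rightarrow(1)$ I would argue by \emph{propagation}. If $x$ has two centered representations $(k,y)$ and $(k',y')$, applying (3) with $x'=x$ gives $k=k'$ and $y_{0}=y'_{0}$. Then $S^{|\tau(y_{0})|-k}x$ equals both $\tau(Sy)$ and $\tau(Sy')$ in centered form, so (3) applied to this shifted point yields $y_{1}=y'_{1}$; iterating to the right and symmetrically to the left gives $y=y'$, hence recognizability. For $(1)\Rightarrow(2)$, openness is obtained from uniqueness: if some $\tau([a])$ failed to be open at a point $x=\tau(y)$ (its unique centered representation, with $y_{0}=a$ and $k=0$), there would be $x^{(n)}\to x$ lying outside $\tau([a])$; by the covering lemma each $x^{(n)}$ carries a centered representation $(k_{n},z^{(n)})$ with $(k_{n},z^{(n)}_{0})\neq(0,a)$, and a convergent subsequence produces a centered representation of $x$ of a different type, contradicting uniqueness.

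The decisive step is $(2)\Rightarrow(3)$, and this is where \cref{theo:Wfact} enters and where I expect the real difficulty. Since the sets $C_{a,j}$ are clopen and finite in number, their indicator functions are uniformly continuous, so some window $x_{[-R_{0},R_{0})}$ determines to which $C_{a,j}$ a point belongs. Granting that the clopen cover $\{C_{a,j}\}$ is in fact a \emph{partition}, condition (3) is immediate: if $x,x'$ agree on a central window of that radius and carry centered representations of types $(y_{0},k)$ and $(y'_{0},k')$, then $x$ lies in both $C_{y_{0},k}$ and $C_{y'_{0},k'}$, forcing $(y_{0},k)=(y'_{0},k')$. The hard part is therefore to upgrade ``clopen cover'' to ``clopen partition'', i.e.\ to forbid a point with two centered representations of distinct types; here the non-periodicity hypothesis underlying \cref{theo:Wfact} is essential (for a periodic $\tau$-image, e.g.\ when $\tau(a)=\tau(b)$, the equivalence genuinely fails), and that theorem is the tool: writing $\cW=\{\tau(a):a\in\cA\}$, each centered representation of $x$ is a $\cW$-factorization, and the bound $|\cW|$ on the number of pairwise disjoint factorizations of an aperiodic point, combined with the clopen structure and the length bound $L$, both rules out such an overlap and pins down an \emph{explicit} value of $R$ expressed through $|\cA|$ and $L$. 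Controlling this quantitative bound—rather than invoking the soft compactness contradiction of \cite{Berthe&Steiner&Thuswaldner&Yassawi:2018}—is exactly the aim of the argument and the main obstacle.
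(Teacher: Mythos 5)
Your preliminaries, your $(3)\Rightarrow(1)$ (the propagation argument, which the paper compresses into a single ``clearly''), and your $(1)\Rightarrow(2)$ (openness of each member of a finite pairwise disjoint closed cover, which you obtain by a sequential compactness argument and the paper by taking complements) are correct and in substance agree with the paper's proof.

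The genuine gap is in $(2)\Rightarrow(3)$, the step you yourself call decisive and then do not carry out. You reduce it to upgrading the clopen cover $\{S^{j}\tau([a])\}$ to a partition and assert that \cref{theo:Wfact} ``rules out such an overlap,'' but that theorem cannot do this job: it bounds the number of pairwise \emph{disjoint} (no common cuts) $\cW$-factorizations of an aperiodic point by $|\cW|$, so it neither forbids a point from carrying two centered representations of different types (two is $\leq|\cW|$ in general) nor applies to a pair of factorizations sharing some cuts. The paper's argument for this implication is entirely different and elementary: each $S^{k}\tau([a])$, being clopen, is a finite union of cylinders $[u.v]$, and one may take all the words $u,v$ of a common length $R$; membership of a point of $X$ in any of the finitely many sets $S^{k}\tau([a])$ is then determined by its coordinates on $[-R,R)$, so two points agreeing on that window lie in exactly the same sets, which is what (3) asks. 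No appeal to \cref{theo:Wfact} occurs anywhere in the lemma. Your worry does touch a real subtlety --- (3) applied with $x=x'$ forces the sets $S^{k}\tau([a])$ to be pairwise disjoint, and clopenness alone does not supply this (consider a periodic image) --- but the tool you name does not resolve it, and as written your proposal leaves the implication unproven.
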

The constant $R$ provided by \cref{lem:mosserec} is called a {\em recognizability constant} for $\tau$.  

\begin{proof}
Since $\tau$ is non erasing, for every $a \in \cA$ the set $\tau([a])$ is clearly a non-empty closed subset of $X$. From the hypothesis $X$ is equal to $\cup_{a \in \cA} \cup_{0 \le k < |\tau(a)|} S^k\tau([a])$.

If the morphism $\tau$ is recognizable in $Y$, then the closed sets $S^k\tau([a])$ are pairwise disjoint. It follows that they are also open and hence they are clopen sets. 

If the set $\tau ([a])$ is a non-empty clopen set, then for each  integer $k$ the set $S^k\tau([a])$ is also a non-empty clopen set. This means that each $S^k\tau([a])$ is a finite union of cylinders $[u.v] = \{ x\in X : x_{[-|u| , |v|)} = uv\}$ for words $u$, $v$ in $\cB^*$. 
Without loss of generality we can assume $|u|=|v|=R$ for some integer $R$, for all the words $u$ and $v$ and $0\leq k < |\tau(a)|$. 
Hence, if  $x_{[-R, R)} = x'_{[-R,R)}$ for $x,x' \in X$, then $x$ and $x'$ belong to the same clopen set $S^k\tau([a])$ which proves \eqref{item:recradius}.

Clearly \eqref{item:recradius} implies the recognizability of $\tau$ in $Y$.
\end{proof}

Observe that \cref{lem:mosserec} states that the notion of recognizability for a morphism is local. 
Historically, a first notion of recognizability for substitutions appears with the work of Mossé \cite{Mosse:1992}.
Actually the recognizability of a non-erasing morphism does imply the recognizability in the sense of Mossé \cite{Berthe&Steiner&Thuswaldner&Yassawi:2018}. 

For composition of morphisms, in \cite{Berthe&Steiner&Thuswaldner&Yassawi:2018} the following result is proved.

\begin{lem}\cite[Lemma 3.5]{Berthe&Steiner&Thuswaldner&Yassawi:2018}\label{lem:CompMorphRec}
Let $\tau \colon \cA^* \to \cB^{\ast}$ and $\sigma \colon \cB^* \to \cC^{\ast}$ be non-erasing morphisms, $Y\subseteq \cA^{\Z}$  be a subshift and $X$ be the smallest subshift containing $\tau(Y)$. 
Then $\sigma \circ \tau$ is recognizable in $Y$ if and only if $\tau$ is recognizable in $Y$ and $\sigma$ is recognizable in $X$.  
\end{lem}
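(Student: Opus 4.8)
The plan is to reduce both implications to a single bijective \emph{correspondence} between representations, after which the two directions become essentially formal. Fix $z \in \cC^\Z$. I claim that centered $(\sigma\circ\tau)$-representations $(k,y)$ of $z$ in $Y$ are in natural bijection with pairs $\big((m,w),(j,y)\big)$ in which $(m,w)$ is a centered $\sigma$-representation of $z$ in $X$ and $(j,y)$ is a centered $\tau$-representation in $Y$ of that same intermediate point $w$. To build the forward map, starting from $z = S^k(\sigma\circ\tau)(y)$ with $0 \le k < |\sigma(\tau(y_0))|$, I would inspect the block $\sigma(\tau(y_0)) = \sigma(\tau(y)_0)\sigma(\tau(y)_1)\cdots$ and locate the unique index $j$, $0\le j < |\tau(y_0)|$, for which position $k$ lies inside the sub-block $\sigma(\tau(y)_j)$; then set $w = S^j\tau(y)$ and $m = k - \sum_{i<j}|\sigma(\tau(y)_i)|$. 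The morphism–shift identity $\sigma(S^j\tau(y)) = S^{\sum_{i<j}|\sigma(\tau(y)_i)|}\sigma(\tau(y))$ then gives $z = S^m\sigma(w)$ with $0 \le m < |\sigma(w_0)|$, so $(m,w)$ is a centered $\sigma$-representation of $z$ in $X$ (here $w\in X$ because $\tau(Y)\subseteq X$ and $X$ is shift-invariant), while $(j,y)$ is a centered $\tau$-representation of $w$ in $Y$. The inverse map recombines a pair via $k = m + \sum_{i<j}|\sigma(\tau(y)_i)|$, and the centeredness bounds together with the same identity show the two maps are mutually inverse. The one place requiring care is that non-erasingness of both $\sigma$ and $\tau$ makes each $|\sigma(\tau(y)_i)|\ge 1$, so the partial sums are strictly increasing and $j,m$ are genuinely determined by $k$.

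For the implication that recognizability of $\tau$ in $Y$ and of $\sigma$ in $X$ yields recognizability of $\sigma\circ\tau$ in $Y$, I would take two centered $(\sigma\circ\tau)$-representations $(k,y)$ and $(k',y')$ of the same $z$, push both through the correspondence, and obtain two centered $\sigma$-representations of $z$ in $X$. Recognizability of $\sigma$ in $X$ forces these equal, so the intermediate points coincide, say $w=w'$. The two pairs then exhibit two centered $\tau$-representations of this common $w$ in $Y$, which recognizability of $\tau$ in $Y$ forces to agree. Since the correspondence is injective, $(k,y)=(k',y')$, proving recognizability of $\sigma\circ\tau$.

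For the converse I would first record the (unconditional) identity $X = \bigcup_{a\in\cA}\bigcup_{0\le k<|\tau(a)|} S^k\tau([a])$ already established in the proof of \cref{lem:mosserec}, which guarantees that \emph{every} point of $X$ admits a centered $\tau$-representation in $Y$. To obtain recognizability of $\tau$ in $Y$, suppose some $w\in X$ had two distinct centered $\tau$-representations $(j,y)\neq(j',y')$; pairing each with the trivial centered $\sigma$-representation $(0,w)$ of $z := \sigma(w)$ and applying the inverse correspondence produces centered $(\sigma\circ\tau)$-representations of $z$ that are distinct by injectivity, contradicting recognizability of $\sigma\circ\tau$. To obtain recognizability of $\sigma$ in $X$, suppose some $z$ had two distinct centered $\sigma$-representations $(m,w)\neq(m',w')$ in $X$; using the covering identity choose centered $\tau$-representations of $w$ and of $w'$ in $Y$, pair them accordingly, and apply the inverse correspondence to get two centered $(\sigma\circ\tau)$-representations of $z$ which are again distinct by injectivity — a contradiction.

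I expect the main obstacle to be precisely the converse statement for $\sigma$: there $w,w'$ are arbitrary points of $X$, and since $X$ is the orbit closure of $\tau(Y)$ and may strictly contain $\tau(Y)$, the argument genuinely depends on the covering identity for $X$ to supply the centered $\tau$-representations needed before the correspondence applies. The rest is bookkeeping, namely checking that the centeredness inequalities $0\le m<|\sigma(w_0)|$ and $0\le j<|\tau(y_0)|$ are exactly equivalent to $0\le k<|\sigma(\tau(y_0))|$, where the non-erasing hypothesis is what makes the length sums behave.
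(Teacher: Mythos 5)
Your argument is correct. Note that the paper does not prove this lemma at all --- it is quoted verbatim from \cite[Lemma~3.5]{Berthe&Steiner&Thuswaldner&Yassawi:2018} --- so there is no in-paper proof to compare against; but your correspondence $(k,y)\leftrightarrow\bigl((m,w),(j,y)\bigr)$ with $w=S^j\tau(y)$ and $k=m+\sum_{i<j}|\sigma(\tau(y)_i)|$ is exactly the natural factorization through the intermediate level used in the cited source, and you correctly isolate the one genuinely non-formal ingredient, namely the covering identity $X=\bigcup_{a\in\cA}\bigcup_{0\le k<|\tau(a)|}S^k\tau([a])$ (which holds because that union is a closed, shift-invariant set containing $\tau(Y)$, hence equals the smallest such subshift), needed to supply a centered $\tau$-representation of an arbitrary $w\in X$ in the converse direction for $\sigma$.
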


In the next lemma we show that a non-recognizable morphism  is the  composition of a recognizable one with a letter-to-letter morphism.

\begin{lem}\label{lem:DonosoTrick}
Let $Y \subseteq \cA^{\Z} $ be a subshift and $\tau \colon \cA^* \to \cB^*$ be  a non-erasing morphism. Then there exist an alphabet $\cC$, a morphism $\sigma \colon \cA^* \to \cC^*$ recognizable in $Y$ for aperiodic points and a map $\psi \colon \cC \to \cB$ such that 
\[ \tau = \psi \circ \sigma. 
\]
\end{lem}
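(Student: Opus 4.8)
The plan is to build $\sigma$ by \emph{tagging} each output letter of $\tau$ with its position inside the block it comes from, so that the cut structure of any $\tau$-representation becomes readable from a single coordinate. Since $\tau$ is non-erasing we have $|\tau(a)|\geq 1$ for every $a\in\cA$, so I may set
\[
\cC=\{(a,i): a\in\cA,\ 1\leq i\leq |\tau(a)|\},
\]
define $\sigma\colon\cA^*\to\cC^*$ on letters by $\sigma(a)=(a,1)(a,2)\cdots(a,|\tau(a)|)$ and extend by concatenation, and define $\psi\colon\cC\to\cB$ by letting $\psi(a,i)$ be the $i$-th letter of the word $\tau(a)$. Then $\sigma$ is non-erasing, and letter by letter $\psi(\sigma(a))=\tau(a)$, whence $\tau=\psi\circ\sigma$ as required.

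Next I would verify recognizability of $\sigma$ in $Y$ through condition \eqref{item:recradius} of \cref{lem:mosserec} with recognizability constant $R=1$. Let $X\subseteq\cC^\Z$ be the smallest subshift containing $\sigma(Y)$, and suppose $(k,y)$ is a centered $\sigma$-representation of some $x\in X$, so $x=S^k\sigma(y)$ with $y\in Y$ and $0\leq k<|\sigma(y_0)|=|\tau(y_0)|$. Then $x_0$ is the letter at position $k$ of the block $\sigma(y_0)$, that is $x_0=(y_0,k+1)$. Hence the single symbol $x_0=(a,i)$ already determines $y_0=a$ and $k=i-1$; consequently, if two points of $X$ carrying centered $\sigma$-representations agree on $x_{[-1,1)}$, their data $(k,y_0)$ coincide. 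By \cref{lem:mosserec} this makes $\sigma$ recognizable in $Y$, and a fortiori recognizable in $Y$ for aperiodic points, which is the statement.

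I do not expect a genuine obstacle: the entire content lies in choosing the alphabet $\cC$ so that the cuts become \emph{locally} visible — they sit exactly in front of each tag with second coordinate $1$ — after which recognizability is immediate with the smallest possible radius. The only points needing a line of care are that non-erasingness of $\tau$ is what makes $\cC$ and $\sigma$ well defined (no empty block), and that $\psi$ is genuinely letter-to-letter so that $\tau=\psi\circ\sigma$ holds exactly rather than up to reindexing. If one prefers a self-contained argument avoiding \cref{lem:mosserec}, one can note directly that $\sigma$ is injective on $\cA^\Z$ — the positions carrying second coordinate $1$ in $\sigma(y)$ are precisely the block boundaries and their first coordinates recover $y$ — so that $x_0$ fixing $(y_0,k)$ together with injectivity forces the whole representation $(k,y)$ to be unique.
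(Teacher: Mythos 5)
Your proposal is correct and is essentially the paper's own construction: the paper likewise sets $\sigma(a)=(1,a)\cdots(|\tau(a)|,a)$ and $\psi(b,a)=\tau(a)_b$, i.e.\ your tagged alphabet with the coordinates swapped. You in fact supply more detail than the paper (which only says the maps ``clearly satisfy the required relation''), and your verification of recognizability with radius $R=1$ via \cref{lem:mosserec} is sound.
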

\begin{proof}
Set $\cC = \cB \times \cA$. Let $\sigma$ and $\psi $ be the morphisms defined by 
\[
\sigma (a) = (1, a)\cdots (|\tau(a)|, a) \textrm{ and } \psi(b,a) = \tau (a)_b, \textrm{ for } b\in \cB, a\in \cA. 
\]
These maps clearly satisfy the required relation. 
%
\end{proof}

Inspired by the  definitions on $\cS$-adic subshifts in \cite{Berthe&Steiner&Thuswaldner&Yassawi:2018},  
let us define a   last notion of recognizability for a family of words with respect to a subshift.

Let $\cW\subset \cB^*$ be a finite set of non-empty words and let $X\subseteq \cB^\Z$ be a subshift. 
Consider a subshift $Y\subseteq \cA^\Z$ and a morphism $\tau: \cA^* \to \cB^*$ such that $\tau(\cA)=\cW$. In particular $\tau$ is non-erasing. 

We say $\cW$ is {\em recognizable in $X$ with respect to $(Y,\tau)$} if $X$ is the smallest subshift containing $\tau(Y)$,
({\it{i.e.}}, $X=\cup_{k\in\Z} S^k(\tau(Y))$) and $\tau$ is recognizable in $Y$, ({\it{i.e.}}, every $x\in X$ has a unique centered $\tau$-representation in $Y$). We say that $\cW$ is {\em recognizable in $X$} if it is recognizable in $X$ with respect to some subshift $Y$ and a morphism $\tau$ as before.  

We say that a $\cW$-factorization $F_x: \Z \to \cW\times \Z$ of $x\in X$ is {\em adapted to} $(Y,\tau)$ if there exists $y=(y_i)_{i\in \Z}\in Y$ such that $\tau(y_i)$ is the first coordinate of $F_x(i)$ for all $i\in \Z$.

The following lemma follows  straightforwardly from definitions.

\begin{lem}
\label{lem:fact-rec}
Let $\cW \subset \cB^*$ be a finite set of non-empty words and  $X\subseteq \cB^\Z$ be a subshift. Consider a subshift $Y\subseteq \cA^\Z$ and a morphism $\tau: \cA^* \to \cB^*$ such that $\tau(\cA)=\cW$. 
The following statements are equivalent: 
\begin{enumerate}
\item $\cW$ is recognizable in $X$ with respect to $(Y,\tau)$;
\item $X$ is the smallest subshift containing $\tau(Y)$ and every $x\in X$ has a unique $\cW$-factorization adapted to $(Y,\tau)$.
\end{enumerate}
\end{lem}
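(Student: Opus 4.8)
The plan is to prove the equivalence by producing, for each fixed point $x\in X$, a natural bijection between the centered $\tau$-representations of $x$ in $Y$ and the $\cW$-factorizations of $x$ adapted to $(Y,\tau)$. Both assertions of the lemma contain the common hypothesis that $X$ is the smallest subshift containing $\tau(Y)$, that is $X=\cup_{k\in\Z}S^k\tau(Y)$. Granting this, every $x\in X$ is of the form $S^k\tau(y)$, so after normalizing it admits at least one centered $\tau$-representation, and hence (through the forward map below) at least one adapted $\cW$-factorization; existence is therefore automatic on both sides. The entire content then reduces to matching the \emph{number} of objects of each kind, and a pointwise bijection makes ``unique centered $\tau$-representation in $Y$'' and ``unique adapted $\cW$-factorization'' literally the same condition.

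Next I would write the forward map explicitly. Starting from a centered $\tau$-representation $(k,y)$, so $x=S^k\tau(y)$ with $0\le k<|\tau(y_0)|$, I read the natural block boundaries of the concatenation $\tau(y)=\cdots\tau(y_{-1})\tau(y_0)\tau(y_1)\cdots$: set $c_0=0$ and $c_{i+1}=c_i+|\tau(y_i)|$ for all $i\in\Z$, so that $\tau(y_i)$ occupies $[c_i,c_{i+1})$ in $\tau(y)$. Putting $n_i:=c_i-k$, the relation $x_n=\tau(y)_{n+k}$ gives $x_{[n_i,n_{i+1})}=\tau(y_i)\in\cW$, and the centering inequality $0\le k<|\tau(y_0)|$ translates exactly into $n_0\le 0<n_1$. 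Thus $F_x(i):=(\tau(y_i),n_i)$ is a $\cW$-factorization of $x$, adapted to $(Y,\tau)$ with witness $y$.

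For the inverse, given an adapted $\cW$-factorization $F_x$ with witness $y$ and starting positions $(n_i)$, I would set $k:=-n_0$. Since $n_0\le 0<n_1$ and $n_1-n_0=|\tau(y_0)|$ we get $0\le k<|\tau(y_0)|$, and a one-line induction using $n_{i+1}-n_i=|\tau(y_i)|$ shows $n_i=c_i-k$ for all $i$; hence $S^k\tau(y)$ carries $\tau(y_i)$ on $[n_i,n_{i+1})$ exactly as $x$ does, so $x=S^k\tau(y)$ and $(k,y)$ is a centered $\tau$-representation. Checking that the two assignments are mutually inverse is then just recovering $k=-n_0$ and retaining the same witness $y$.

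The only genuine work is the index bookkeeping with the half-open blocks and the normalization $k=-n_0\leftrightarrow n_0\le 0<n_1$, which is routine once the conventions are fixed. The point I would watch most carefully is the status of the witness $y$: the data ``adapted'' records only the words $\tau(y_i)\in\cW$ carried at the cuts, and the bijection transports a representation to a factorization and back while preserving $y$, so uniqueness of the factorization forces uniqueness of the associated sequence in $Y$, matching uniqueness of the pair $(k,y)$. With this pointwise bijection established, the count of centered $\tau$-representations of $x$ equals the count of adapted $\cW$-factorizations of $x$ for every $x\in X$; combined with the shared hypothesis that $X$ is the smallest subshift containing $\tau(Y)$, this yields the equivalence of the first and second assertions.
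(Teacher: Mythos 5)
Your argument is correct and is exactly the unpacking the paper intends: the paper offers no proof of this lemma, stating only that it ``follows straightforwardly from definitions'', and your explicit correspondence $(k,y)\mapsto F_x(i)=(\tau(y_i),\,c_i-k)$ with inverse $k=-n_0$ is that straightforward unpacking. The index bookkeeping is right: with the paper's convention that $\tau(y_0)$ starts at coordinate $0$ of $\tau(y)$, the centering condition $0\le k<|\tau(y_0)|$ translates precisely into $n_0\le 0<n_1$, and the shared clause that $X$ is the smallest subshift containing $\tau(Y)$ supplies existence on both sides. One caveat on the very point you flag: your correspondence is genuinely a bijection between centered $\tau$-representations and \emph{pairs} (factorization, witness $y$), and the assertion that ``uniqueness of the factorization forces uniqueness of the associated sequence in $Y$'' requires recovering $y_i$ from $\tau(y_i)$, i.e.\ injectivity of $\tau$ on letters (equivalently $|\cA|=|\cW|$). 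The hypothesis $\tau(\cA)=\cW$ does not literally force this, so the direction (2)$\Rightarrow$(1) as you justify it has a small hole for non-injective $\tau$; it is harmless here because injectivity holds in every application in the paper (e.g.\ the coding morphisms of Section 5 are bijections onto the sets of return words), and the direction (1)$\Rightarrow$(2), the one the paper actually invokes, needs no such assumption. If you want the argument airtight as stated, add the one-line remark that $\tau$ may be assumed injective on $\cA$, or phrase the uniqueness in (2) as uniqueness of the pair $(F_x,y)$.
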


\cref{lem:mosserec} together with \cref{lem:fact-rec} imply the following local property for this last notion of recognizability.

\begin{lem}
\label{lem:rayon-rec}
Let $\cW \subset \cB^*$ be a finite set of non-empty words and let $X\subseteq \cB^\Z$ be a subshift. Consider a subshift $Y\subseteq \cA^\Z$ and a morphism $\tau: \cA^* \to \cB^*$ such that $\tau(\cA)=\cW$.
Assume $\cW$ is recognizable in $X$ with respect to $(Y,\tau)$ and let $R$ be a recognizability constant for $\tau$. 
Then, for all $x\in X$ the set $\{x' \in X:  F_{x'}(0)=F_x(0)\}$ is non-empty and clopen. Moreover, if $x, x' \in X$ are such that $x_{[-R,R)} = x'_{[-R,R)}$ then 
$F_x (0)=F_{x'}(0)$.
\end{lem}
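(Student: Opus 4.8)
The plan is to translate every statement about the factorization map $F_x$ into the language of centered $\tau$-representations, where \cref{lem:mosserec} applies directly, and then read the two conclusions off that dictionary. First I would record the correspondence. By \cref{lem:fact-rec}, since $\cW$ is recognizable in $X$ with respect to $(Y,\tau)$, each $x\in X$ has a unique $\cW$-factorization $F_x$ adapted to $(Y,\tau)$; let $y=(y_i)_{i\in\Z}\in Y$ be the witnessing point, so that the first coordinate of $F_x(i)$ is $\tau(y_i)$. If $(n_k)_{k\in\Z}$ denotes the cut sequence of $F_x$, then $x_{[n_k,n_{k+1})}=\tau(y_k)$ for all $k$, which is exactly the statement that $x=S^{-n_0}\tau(y)$ with $-|\tau(y_0)|<n_0\le 0$. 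Hence $(k,y):=(-n_0,y)$ is the (unique, by recognizability) centered $\tau$-representation of $x$, and
\[
F_x(0)=\bigl(\tau(y_0),n_0\bigr)=\bigl(\tau(y_0),-k\bigr).
\]
Thus $F_x(0)$ is determined by, and determines, the pair $\bigl(\tau(y_0),k\bigr)$ of the centered representation.

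For the second (local) assertion I would feed this dictionary into \cref{lem:mosserec}\eqref{item:recradius}. Let $(k,y)$ and $(k',y')$ be the centered $\tau$-representations of $x$ and $x'$. If $x_{[-R,R)}=x'_{[-R,R)}$ with $R$ a recognizability constant for $\tau$, then \cref{lem:mosserec}\eqref{item:recradius} yields $k=k'$ and $y_0=y'_0$; in particular $\tau(y_0)=\tau(y'_0)$, and therefore $F_x(0)=(\tau(y_0),-k)=(\tau(y'_0),-k')=F_{x'}(0)$, as required.

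For the first assertion, non-emptiness is immediate since $x$ itself lies in $\{x'\in X: F_{x'}(0)=F_x(0)\}$. To obtain the clopen property, write $F_x(0)=(w,i)$, and note $0\le -i<|w|$ because $i\le 0<i+|w|$. By the dictionary, $x'$ lies in the fiber exactly when its centered representation $(k',y')$ satisfies $\tau(y'_0)=w$ and $k'=-i$. Decomposing over the finitely many letters $a\in\cA$ with $\tau(a)=w$, and recalling that $\{x'\in X: y'_0=a\}=\bigcup_{0\le j<|\tau(a)|}S^{j}\tau([a])$, I would establish the identity
\[
\{x'\in X: F_{x'}(0)=(w,i)\}=\bigcup_{a\in\cA,\ \tau(a)=w}S^{-i}\tau([a]).
\]
By \cref{lem:mosserec}, each $\tau([a])$ is a non-empty clopen subset of $X$, so every $S^{-i}\tau([a])$ is clopen and this finite union is clopen, which finishes the argument.

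The step I expect to require the most care is the index bookkeeping in the dictionary---correctly pinning down $F_x(0)=(\tau(y_0),-k)$ and checking the admissibility condition $0\le -i<|w|$---together with the fact that $\tau$ need not be injective on letters. This non-injectivity is what forces the fiber to be expressed as a union over \emph{all} $a$ with $\tau(a)=w$ rather than as a single shifted copy of some $\tau([a])$, and it is the only place where one must be slightly careful to keep the computation clean.
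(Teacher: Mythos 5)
Your proof is correct and follows exactly the route the paper intends: the paper states \cref{lem:rayon-rec} without proof, asserting it is a consequence of \cref{lem:mosserec} and \cref{lem:fact-rec}, and your argument is precisely that derivation made explicit (the dictionary $F_x(0)=(\tau(y_0),-k)$ between adapted factorizations and centered representations, \cref{lem:mosserec}\eqref{item:recradius} for the local statement, and the clopenness of the sets $S^{-i}\tau([a])$ for the first assertion). The bookkeeping, including the union over all $a$ with $\tau(a)=w$ to account for non-injectivity of $\tau$ on letters, is handled correctly.
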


\subsection{Recognizability for directive sequences}\label{subsec:recogforDirSequences}

A~directive sequence \hfill\break ${\boldsymbol{\tau}}=$ $(\tau_n\colon \cA_{n+1}^*\to\cA_n^*)_{n\geq 0}$ is \emph{recognizable at level~$n$} if $\tau_n$ is 
recognizable in~$X_{\boldsymbol{\tau}}^{(n+1)}$. 
The directive sequence~$\boldsymbol{\tau}$ is \emph{recognizable} if it is recognizable at level~$n$ for each $n \ge 0$. If there is an $N \geq 0$ such that $\boldsymbol{\tau}$ is recognizable at level~$n$ for each $n \ge N$, then we say that  $\boldsymbol{\tau}$ is \emph{eventually recognizable}. 

It is not difficult to prove 
using the fact that $\tau_n(X_{\boldsymbol{\tau}}^{(n+1)})$ is included in  $X_{\boldsymbol{\tau}}^{(n)}$ that the directive sequence 
$\boldsymbol{\tau}$ is recognizable if and only if for all $n\geq 0$ and $x \in X_{\boldsymbol{\tau}}(=X_{\boldsymbol{\tau}}^{(0)})$ there is at most one 
$y \in X_{\boldsymbol{\tau}}^{(n)}$ and $0\leq k < |\tau_{[0,n)}(y_0)|$ such that 
$x=S^k(\tau_{[0,n)}(y))$. 
In fact if $\boldsymbol{\tau}$ is recognizable, then all $x\in X_{\boldsymbol{\tau}}$ has a unique such couple $(k,y)$. 
The same result follows for the notion of eventually recognizable directive sequences.

Given a $\cS$-adic subshift with bounded alphabet rank, in \cite{Berthe&Steiner&Thuswaldner&Yassawi:2018} 
(inspired by Theorem \ref{DowMaass}) the authors show that there is a uniform bound for the number of 
morphisms in its directed sequence that are not recognizable for aperiodic points. Thus any $\cS$-adic subshift is forced to be eventually recognizable for aperiodic points. The bound in \cite{Berthe&Steiner&Thuswaldner&Yassawi:2018} can be improved by following the same steps of the original proof but considering the bound in Theorem \ref{theo:Wfact}.  

\begin{theo} 
\label{theo:bsty}
Let $\boldsymbol{\tau} = (\tau_n \colon  \mathcal{A}^*_{n+1}\to \mathcal{A}_n^*)_{n\ge 0}$ be a sequence of morphisms with $d = \liminf_{n\to\infty} |\mathcal{A}_n| < \infty$. 
Then, there are at most $\log_2 d$ morphisms $\tau_n$ that are not recognizable in 
$X_{\boldsymbol{\tau}}^{(n+1)}$ for aperiodic points.
In particular, $\boldsymbol{\tau}$ is eventually recognizable for aperiodic points. 
\end{theo}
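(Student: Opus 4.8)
\emph{Strategy.} The plan is to rerun the contradiction argument behind \cite[Theorem~5.1]{Berthe&Steiner&Thuswaldner&Yassawi:2018}, but to control the number of simultaneous factorizations it produces by means of \cref{theo:Wfact} rather than the coarser infection-type estimate used there; this substitution is exactly what turns their bound into $\log_2 d$. Suppose $n_1 < n_2 < \cdots < n_m$ are levels at which $\tau_{n_j}$ is \emph{not} recognizable in $X_{\boldsymbol{\tau}}^{(n_j+1)}$ for aperiodic points. Since $\liminf_n |\mathcal A_n| = d$ with $d$ finite forces $|\mathcal A_n| = d$ for infinitely many $n$, I fix a level $N > n_m$ with $|\mathcal A_N| = d$ and set $\cW = \tau_{[0,N)}(\mathcal A_N) \subset \mathcal A_0^*$, a finite set of non-empty words (the morphisms being non-erasing) with $|\cW| \le |\mathcal A_N| = d$. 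The goal is to manufacture a single aperiodic point of $X_{\boldsymbol{\tau}}$ carrying at least $2^m$ pairwise disjoint $\cW$-factorizations: then \cref{theo:Wfact} forces $2^m \le |\cW| \le d$, that is $m \le \log_2 d$. Applying this to the collection of \emph{all} non-recognizable levels (which is thereby forced to be finite) bounds their number by $\log_2 d$, and eventual recognizability for aperiodic points is immediate. (If $X_{\boldsymbol{\tau}}$ carries no aperiodic point the statement is vacuous, so I may assume one exists.)

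\emph{The doubling construction.} Here is where each bad level is made to contribute a factor $2$. Invoking the aperiodic-point version of the composition lemma \cref{lem:CompMorphRec} level by level, recognizability of $\tau_{[0,N)}$ in $X_{\boldsymbol{\tau}}^{(N)}$ for aperiodic points is equivalent to recognizability of each intermediate $\tau_n$; I would upgrade this qualitative equivalence to a quantitative one. At each level $n_j$, the local criterion of \cref{lem:mosserec} tells us that some $\tau_{n_j}([a])$ fails to be clopen, producing an aperiodic point of $X_{\boldsymbol{\tau}}^{(n_j)}$ with two distinct centered $\tau_{n_j}$-representations. Treating the binary choice at each $n_j$ as independent and assembling them through the composition $\tau_{[0,N)} = \tau_{[0,n_1)}\circ\cdots\circ\tau_{[n_m,N)}$, I would produce $2^m$ distinct centered $\tau_{[0,N)}$-representations of one common aperiodic point $x \in X_{\boldsymbol{\tau}}$ (passing, by a compactness/recurrence argument, to a point that realizes all $m$ choices simultaneously), each of which yields a $\cW$-factorization of $x$ adapted to $(X_{\boldsymbol{\tau}}^{(N)},\tau_{[0,N)})$ in the sense of \cref{lem:fact-rec}.

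\emph{Disjointness and the main obstacle.} The delicate point — and the genuine heart of the proof — is to guarantee that the quantity that doubles is the number of \emph{pairwise disjoint} factorizations, since that is precisely the quantity \cref{theo:Wfact} bounds. Two issues must be dispatched. First, a failure of recognizability can be of the degenerate ``letter-merging'' type, producing new \emph{representations} but no new \emph{cuts}; I would remove this by writing $\tau_n = \psi_n \circ \sigma_n$ via \cref{lem:DonosoTrick}, with $\sigma_n$ recognizable for aperiodic points and $\psi_n$ letter-to-letter, so that only cut-creating failures remain to be counted and distinct choices translate into distinct cut positions. Second, and more seriously, two $\cW$-factorizations of an aperiodic point typically share many cuts, so ``distinct near the origin'' must be promoted to ``globally pairwise disjoint''. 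Here I would use the recognizability radius of \cref{lem:rayon-rec}: each choice at a bad level displaces block boundaries within a window controlled by the recognizability constants of the recognizable pieces, and, propagating through the nesting (finer levels refine the cuts of coarser ones) together with the relocation afforded by recurrence, one arranges the $2^m$ cut sets to be pairwise disjoint. Establishing this clean factor $2$ per non-recognizable level is the main obstacle; the remaining bookkeeping and the passage from $2^m \le d$ to $m \le \log_2 d$ are routine.
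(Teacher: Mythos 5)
Your overall strategy is the one the paper itself intends: Theorem~\ref{theo:bsty} is given in the paper with essentially no proof beyond the remark that one should ``follow the same steps'' as in \cite[Theorem~5.1]{Berthe&Steiner&Thuswaldner&Yassawi:2018} while replacing the factorization-counting estimate used there by Theorem~\ref{theo:Wfact}. You have correctly identified that the whole point is to produce, from $m$ non-recognizable levels below a level $N$ with $|\cA_N|=d$, at least $2^m$ pairwise disjoint $\cW$-factorizations of a single aperiodic point for $\cW=\tau_{[0,N)}(\cA_N)$, so that $2^m\le|\cW|\le d$ by Theorem~\ref{theo:Wfact}.

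As a proof, however, your proposal has a genuine gap, and it sits exactly where you place it: the passage from ``$2^m$ distinct centered representations'' to ``$2^m$ pairwise \emph{disjoint} $\cW$-factorizations''. Theorem~\ref{theo:Wfact} only bounds the number of factorizations that pairwise share no cut at all, whereas the representations produced by a failure of recognizability generically share many cuts; an aperiodic point can carry uncountably many distinct (non-disjoint) $\cW$-factorizations, so ``distinct'' is far too weak a conclusion, and promoting it to ``pairwise disjoint'' is the entire technical content of the theorem, not routine bookkeeping. Moreover, the one concrete device you propose for the degenerate case does not work: if the failure of recognizability of $\tau_{n_j}$ is of letter-merging type (two centered representations $(k,y)$ and $(k,y')$ with $y\neq y'$ but $\tau_{n_j}(y_i)=\tau_{n_j}(y_i')$ for all $i$, hence identical cut sets), then writing $\tau_{n_j}=\psi\circ\sigma$ with $\sigma$ recognizable and $\psi$ letter-to-letter via \cref{lem:DonosoTrick} merely relocates the failure into $\psi$ (by \cref{lem:CompMorphRec}), and a letter-to-letter non-recognizability creates no cuts whatsoever; far from leaving ``only cut-creating failures'', the decomposition makes every remaining failure cut-free, so such a level contributes no doubling of disjoint factorizations and the inequality $2^m\le|\cW|$ does not follow for it by this route. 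To close the argument one needs the actual mechanism of \cite{Berthe&Steiner&Thuswaldner&Yassawi:2018} that converts each non-recognizable level into a genuine doubling of pairwise disjoint cut data and disposes of the cut-preserving failures by a separate count; without that, the bound $\log_2 d$ is not established.
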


\section{From finite topological rank minimal Cantor systems to $\cS$-adic subshifts and vice versa}
\label{section:finiterankvsSadic}

The relation between the classes of expansive minimal Cantor systems of finite topological rank and $\cS$-adic subshifts with bounded alphabet rank is not fully understood, at least from a topological perspective. Nevertheless, it is implicit in the works of \cite{Downarowicz&Maass:2008,Berthe&Steiner&Thuswaldner&Yassawi:2018,Durand&Leroy:2012} that both classes somehow coincide. 
Indeed, in \cite{Berthe&Steiner&Thuswaldner&Yassawi:2018} it is proved under mild conditions that measure theoretically this holds. Also, in \cite{Durand&Leroy:2012} conditions on a directive sequence are provided so that it generates a  $\cS$-adic subshift of finite topological rank.

The main purpose of this section is to prove this correspondence  in the topological setting modulo the recognizability of the directive sequence defining the $\cS$-adic system. 
In order to do that, we provide canonical constructions allowing to build a finite topological rank Bratteli-Vershik representation of a given recognizable $\cS$-adic subshift and on the other hand to define the directive sequence of a $\cS$-adic subshift conjugate to a given expansive finite topological rank minimal Cantor system. 

More precisely, the main theorem of this section is the following.

\begin{theo}\label{theo:equivalenceSadicvsFiniteRank}
\text{}
\begin{enumerate}
\item
\label{theo:partone}
An expansive finite topological rank minimal Cantor system is topologically conjugate to 
a primitive recognizable $\cS$-adic subshift with bounded alphabet rank. 
Moreover, the alphabet rank of the corresponding directive sequence is bounded by the rank of the minimal Cantor system.   

\item Any  primitive $\cS$-adic subshift (so minimal) with bounded alphabet rank is a topological factor of a finite topological rank minimal Cantor system whose rank is bounded by the square of the alphabet rank of the subshift. 
Moreover, when  the $\cS$-adic subshift is recognizable then the factor map is invertible and the subshift  has finite topological rank.  
\end{enumerate}
\end{theo}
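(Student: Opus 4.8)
The plan is to prove the two parts separately, using the dictionary established in Sections \ref{subsec:Bratteli-Vershik} and \ref{sec:recog-factorizing} between ordered Bratteli diagrams, the sequence of morphisms $\boldsymbol{\tau}^B$ read on them, and recognizability.

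For part \eqref{theo:partone}, I would start from an expansive finite topological rank minimal Cantor system $(X,T)$. By \cref{DowMaass} and the comment following it, $(X,T)$ is conjugate to a level subshift $(X_B^{(n)},S)$ coming from a Bratteli-Vershik representation $B$ with at most $d$ vertices per level, where $d$ is the topological rank. Telescoping if necessary, I may assume the morphisms $\tau_n^B$ are proper for $n\geq 1$, and by primitivity of $B$ the sequence $\boldsymbol{\tau}^B$ is primitive. The subshift $X_B^{(0)}$ is exactly the $\cS$-adic subshift generated by $\boldsymbol{\tau}^B$, and each alphabet $\cA_n=V_n$ has size at most $d$, so the alphabet rank is bounded by $d$. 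The remaining point is recognizability of $\boldsymbol{\tau}^B$. Here I would invoke the local characterization in \cref{lem:mosserec}: recognizability of each $\tau_n^B$ in $X_B^{(n+1)}$ amounts to the sets $\tau_n^B([v])$ being clopen, and this is precisely the content of the fact that the CKR towers of consecutive levels are nested and that each point of $X$ determines a unique infinite path in $B$ (the conjugacy $x\mapsto(e_1,e_2,\dots)$ described at the end of Section \ref{sec:BVrepresentation}). In effect the uniqueness of the Bratteli-Vershik coding \emph{is} the recognizability of $\boldsymbol{\tau}^B$, so I would argue that the cuts induced by the tower structure give each $x$ a unique centered $\tau_n^B$-representation, possibly after an initial telescoping to absorb the finitely many non-recognizable levels permitted by \cref{theo:bsty}.

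For part (2), I would go in the reverse direction. Given a primitive $\cS$-adic subshift $(X_{\boldsymbol{\tau}},S)$ with $AR(\boldsymbol{\tau})\leq d$, I want to manufacture an ordered Bratteli diagram. The obstruction is that an arbitrary directive sequence need not be proper, and the first morphism $\tau_0$ carries no structural hypotheses, whereas a Bratteli-Vershik representation demands proper order (unique maximal and minimal paths). The standard device is to \emph{properize}: replace each $\tau_n$ by a morphism on an enlarged alphabet that records the first and last letter of the images, so that the new morphisms start and end with a fixed letter. This is where the square appears: the enlarged alphabet has size on the order of $|\cA_n|^2$ (encoding an ordered pair of letters, or a letter together with a boundary marker), which yields a Bratteli diagram with at most $d^2$ vertices per level, hence a minimal Cantor system of topological rank at most $d^2$. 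The associated Bratteli-Vershik system $(X_B,V_B)$ factors onto $X_{\boldsymbol{\tau}}$ by reading off the underlying letters (a letter-to-letter map in the spirit of \cref{lem:DonosoTrick}), giving the desired factor map.

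The sharper claim is that when $\boldsymbol{\tau}$ is recognizable the factor map is invertible. The plan here is to show that recognizability forces the properization to lose no information: since $\boldsymbol{\tau}$ is recognizable, by the discussion in Section \ref{subsec:recogforDirSequences} every $x\in X_{\boldsymbol{\tau}}$ determines a unique sequence of cuts at every level, and \cref{lem:CompMorphRec} propagates recognizability through the compositions $\tau_{[0,n)}$. I would use this to reconstruct from $x$ the full coding by the properized diagram, exhibiting a continuous inverse to the factor map and thereby a conjugacy; the finite topological rank of $X_{\boldsymbol{\tau}}$ then follows because it is conjugate to the rank-$d^2$ Bratteli-Vershik system just built. \textbf{The main obstacle} I anticipate is the careful bookkeeping of the properization: one must build the enlarged morphisms so that they are simultaneously proper, primitive, non-erasing, and compatible with the order on the Bratteli diagram, while controlling exactly how recognizability of the original sequence transfers to recognizability (equivalently, injectivity of the coding) for the properized one. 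Getting the alphabet size to be genuinely quadratic rather than worse, and verifying that the boundary-marker encoding does not destroy primitivity, is the delicate technical heart of the argument.
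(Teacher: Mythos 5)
For part \eqref{theo:partone} your route is essentially the paper's: conjugate to the level-$0$ subshift via \cref{DowMaass}, telescope to properness, and convert the clopenness coming from the tower structure into recognizability via \cref{lem:mosserec}. One technical wrinkle: you cannot directly assert that $\tau_n^B([v])$ is clopen in $X_B^{(n)}$ for $n\geq 1$, because only the level-$0$ coding $\phi_0$ is known to be a homeomorphism. The paper instead shows that the \emph{composite} morphisms $\tau^B_{[0,n+1)}$ are recognizable in $X_B^{(n+1)}$ (their images of cylinders are clopen in $X_B^{(0)}$, where $\phi_0$ is invertible) and then peels off the individual levels by a recursive use of \cref{lem:CompMorphRec}. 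Your appeal to \cref{theo:bsty} here is unnecessary: the clopen argument gives recognizability at every level, not just eventually.

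For part (2) you take a genuinely different route, and it has a real gap. You propose to properize the directive sequence on alphabets of size $|\cA_n|^2$ and then build an ordered Bratteli diagram, attributing the square in the rank bound to this alphabet enlargement. But the properization is the entire content of your argument and is never carried out; moreover the construction you describe --- recording the first and last letters of the images so that ``the new morphisms start and end with a fixed letter'' --- does not produce proper morphisms: knowing the extremal letters of $\tau_n(a)$ does not let you rewrite $\tau_n$ so that all images share a common first and last letter, and a correct properization (tracking contexts, or passing to return words) requires care to keep the alphabets quadratic, preserve primitivity and non-erasingness, and transfer recognizability. You have deferred exactly the step on which the theorem rests. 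The paper avoids properization entirely: it first invokes \cref{theo:bsty} to reduce to an eventually recognizable sequence, uses \cref{lem:DonosoTrick} to split the initial non-recognizable block as $\psi\circ\sigma$ with $\sigma$ recognizable and $\psi$ letter-to-letter (this $\psi$ \emph{is} the factor map), and then applies \cref{lem:sufforfiniterank} to the recognizable sequence $\boldsymbol{\tau'}=(\sigma,\tau_N,\tau_{N+1},\dots)$: the sets $\cW_n=\{\tau'_{[0,n+1)}(a)\}$ are recognizable with $|\cW_n|\le AR(\boldsymbol{\tau})$, and the nested CKR partitions built from them have towers indexed by \emph{pairs} of consecutive words $v,w\in\cW_n$, which is where the bound $AR(\boldsymbol{\tau})^2$ actually comes from. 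Invertibility of $\psi$ in the recognizable case is then immediate, with no need to reconstruct a coding through a properized diagram.
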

By results in 
\cite{Berthe&Steiner&Thuswaldner&Yassawi:2018} one has that all primitive $\cS$-adic subshifts with bounded alphabet rank are eventually recognizable, so the factor map in (2) is just the one projecting the recognizable level to the given $\cS$-adic subshift. 
It is also interesting to comment that there are several $\cS$-adic subshifts, like Chacon subshift, that are not primitive but one can prove easily that they have finite topological rank (for Chacon example the topological rank is two). 

\subsection{Combinatorial lemmas to get finite topological rank systems}
\label{sec:CombiLemmas}

Before proving the main theorem of the section we develop two general combinatorial ideas that we think can be useful in different contexts.  

\subsubsection{Combinatorial constructions using recognizable sets of words}

In this section we give a sufficient condition for a minimal subshift to have finite topological rank. 
Let $X \subseteq \cB^\Z$ be a subshift and $\cW \subset \cB^*$ be a finite set of non-empty words that is 
recognizable in $X$  with respect to $(Y,\tau)$, where $Y\subseteq \cA^\Z$ is a subshift
and $\tau: \cA^* \to \cB^*$ is a morphism with $\tau(\cA)=\cW$.  
Then for each $x\in X$ there exists a unique $\cW$-factorization $F_x\colon  \Z \to \cW\times \mathbb{Z}$ adapted to $(Y,\tau)$. 

For $w=w^-w^+ \in \cW$ with $w^-,w^+ \in \cB^*$ we define the set
$$
\llbracket w^-.w^+ \rrbracket =\{x \in X \ : \ F_x(0)= (w,-|w^-|) \}.
$$
In words, $\llbracket w^-.w^+ \rrbracket$ is the set of points in $X$ whose $\cW$-factorizations adapted to $(Y,\tau)$ have 
$w$ as a centered word following the decomposition $w^-.w^+$, where the dot indicates the separation between negative and non-negative coordinates. If $w^-$ is the empty word we just write 
$\llbracket w \rrbracket$. Clearly $\llbracket w^-.w^+ \rrbracket = S^{|w^-|} \llbracket w \rrbracket$.
Given another word $v\in \cW$ we also set
$$
\llbracket v , w^-.w^+ \rrbracket =\{x \in X \ : \ F_x(-1)=(v,-(|v|+|w^-|)), \ F_x(0)= (w,-|w^-|) \}.
$$

The following result follows from \cref{lem:rayon-rec}. 

\begin{lem}\label{lem:localrec}
Consider $\cW \subset \cB^*$, $X\subseteq \cB^\Z$, $Y\subseteq \cA^\Z$ and $\tau:\cA^*\to \cB^*$ as before. Assume $\cW$ is recognizable in $X$ with respect to $(Y,\tau)$ and let $R$ be a recognizability constant for $\tau$. 
Then, for all $w=w^-w^+, v \in \cW$ the sets
$\llbracket w^-.w^+ \rrbracket$ and $\llbracket v, w^-.w^+ \rrbracket$ are clopen, thus finite unions of cylinder sets of $X$: 
$$
\llbracket v, w^-.w^+ \rrbracket = \bigcup_{x\in \llbracket v, w^-.w^+ \rrbracket} [x_{[-R-|vw^-|, 0)}.x_{[0,  |w^+| + R)}].
$$
\end{lem}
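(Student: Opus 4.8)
The plan is to deduce everything from \cref{lem:rayon-rec}, which already supplies the two ingredients I need: for each $x\in X$ the fibre $\{x'\in X : F_{x'}(0)=F_x(0)\}$ is clopen, and the value $F_x(0)$ depends only on the window $x_{[-R,R)}$. The strategy is to express both $\llbracket w^-.w^+\rrbracket$ and $\llbracket v, w^-.w^+\rrbracket$ in terms of shifted copies of the basic sets $\llbracket u\rrbracket=\{x\in X : F_x(0)=(u,0)\}$ (for $u\in\cW$), which are clopen by \cref{lem:rayon-rec}, and then to track the index windows precisely enough to read off the claimed cylinder decomposition.

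First I would settle $\llbracket w^-.w^+\rrbracket$. For fixed $w\in\cW$, the set $\llbracket w\rrbracket$ is either empty or equals $\{x'\in X : F_{x'}(0)=F_x(0)\}$ for any $x$ it contains, hence clopen by \cref{lem:rayon-rec}; since $S$ is a homeomorphism and $\llbracket w^-.w^+\rrbracket=S^{|w^-|}\llbracket w\rrbracket$ (as already observed before the statement), this set is clopen. For the two-block set the key claim is
\[
\llbracket v, w^-.w^+\rrbracket=\llbracket w^-.w^+\rrbracket\cap S^{|v|+|w^-|}\llbracket v\rrbracket .
\]
To verify it I would unwind the definitions: membership $x\in S^{|v|+|w^-|}\llbracket v\rrbracket$ means $F_{S^{-(|v|+|w^-|)}x}(0)=(v,0)$, i.e.\ $x$ has a cut at $-(|v|+|w^-|)$ whose block $v$ occupies $[-(|v|+|w^-|),-|w^-|)$, with the following cut exactly at $-|w^-|$; combined with $F_x(0)=(w,-|w^-|)$ coming from $\llbracket w^-.w^+\rrbracket$, this forces $v$ to be the block immediately left of the centered block $w$, which is precisely $F_x(-1)=(v,-(|v|+|w^-|))$. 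As a shift of a clopen set is clopen and a finite intersection of clopen sets is clopen, $\llbracket v, w^-.w^+\rrbracket$ is clopen.

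For the explicit cylinder formula I would push the local-determination part of \cref{lem:rayon-rec} through the two shifts. Applying it to $S^{-|w^-|}x$ shows that the condition $F_x(0)=(w,-|w^-|)$ depends only on $x_{[-|w^-|-R,\,-|w^-|+R)}$, and applying it to $S^{-(|v|+|w^-|)}x$ shows that $F_{S^{-(|v|+|w^-|)}x}(0)=(v,0)$ depends only on $x_{[-|vw^-|-R,\,-|vw^-|+R)}$. Both windows sit inside $[-R-|vw^-|,\,|w^+|+R)$, so membership in $\llbracket v, w^-.w^+\rrbracket$ is determined by the restriction of $x$ to that interval. Hence for every $x$ in the set the entire cylinder $[x_{[-R-|vw^-|,0)}.x_{[0,|w^+|+R)}]$ lies in the set, and since each point lies in its own such cylinder the union over all $x\in\llbracket v, w^-.w^+\rrbracket$ equals the set, giving the displayed equality; compactness of $X$ then reduces the union to finitely many distinct cylinders.

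The computations are routine once the reduction to shifted single-block sets is in place; the only delicate point, and the step I would check most carefully, is the index bookkeeping — confirming that the shift exponents $|w^-|$ and $|v|+|w^-|$ are the correct ones and that the two windows produced by the two applications of \cref{lem:rayon-rec} both fit inside $[-R-|vw^-|,\,|w^+|+R)$, using only $|v|,|w^-|,|w^+|\ge 0$.
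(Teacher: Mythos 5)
Your proposal is correct and follows exactly the route the paper intends: the paper gives no written proof beyond the remark that the lemma "follows from Lemma \ref{lem:rayon-rec}", and your argument supplies precisely those details — reducing both sets to shifted copies of the clopen sets $\llbracket u\rrbracket$ via the identity $\llbracket v,w^-.w^+\rrbracket=\llbracket w^-.w^+\rrbracket\cap S^{|v|+|w^-|}\llbracket v\rrbracket$, and checking that the two determining windows fit inside $[-R-|vw^-|,\,|w^+|+R)$. The index bookkeeping is right, so nothing needs to be changed.
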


The next theorem is the main construction of this subsection. 

\begin{theo}[Sufficient condition for finite topological rank]\label{lem:sufforfiniterank}
Let $(X,S)$ be a minimal subshift. 
Then its topological rank is bounded from above by $K(X)^2$, where  
$$
K (X) = \lim_{n\to +\infty}  \inf_{\substack{\cW \subset \cup_{k\geq n}\cL_k (X) \\  
\cW \ {\rm is \ recognizable \ in } \ X}}  |\cW|.
$$
\end{theo}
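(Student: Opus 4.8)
The plan is to exhibit, when $K(X)=:K$ is finite (the bound being vacuous otherwise), a primitive, properly ordered Bratteli--Vershik representation of $(X,S)$ with at most $K^2$ vertices per level, and then invoke the construction of \cref{sec:BVrepresentation}. First I would record a monotonicity remark: enlarging $n$ shrinks the family of admissible sets $\cW\subset\bigcup_{k\ge n}\cL_k(X)$ that are recognizable in $X$, so the inner infimum in the definition of $K(X)$ is nondecreasing in $n$; being integer valued and convergent to $K$, it is eventually equal to $K$. Hence at every sufficiently large scale there is a set $\cW$, recognizable in $X$ with respect to some $(Y,\tau)$ and with $|\cW|=K$, all of whose words are as long as we please.

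Next I would build inductively a strictly increasing sequence of scales $n_1<n_2<\cdots$ together with recognizable sets $\cW_1,\cW_2,\dots$, $|\cW_j|=K$, $\cW_j\subset\bigcup_{k\ge n_j}\cL_k(X)$, with recognizability constants $R_j$ (\cref{lem:mosserec}), subject to two requirements. The \emph{length requirement} is that every word of $\cW_{j+1}$ be longer than $2\max_{w\in\cW_j}|w|+2R_j$, so that the local nature of recognizability (\cref{lem:rayon-rec}) can be exploited inside each $\cW_{j+1}$-word. The \emph{alignment requirement} is that the cuts of the unique adapted $\cW_{j+1}$-factorization form a subset of the cuts of the $\cW_j$-factorization, i.e.\ every word of $\cW_{j+1}$ is a concatenation of words of $\cW_j$. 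For each $j$ I would then read off a clopen Kakutani--Rohlin partition $\cT_j$ whose towers are indexed by the ordered pairs $(v,w)\in\cW_j\times\cW_j$ of consecutive words, the tower of $(v,w)$ having base $\llbracket v,w\rrbracket$ and height $|w|$, with atoms $S^{i}\llbracket v,w\rrbracket$ for $0\le i<|w|$. By \cref{lem:localrec} each such set is clopen, and uniqueness of the adapted factorization shows these atoms partition $X$; thus $\cT_j$ has at most $K^2$ towers.

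It then remains to verify conditions \ref{item:KR1}--\ref{item:KR4} and primitivity. The alignment requirement makes the base of $\cT_{j+1}$ a subset of that of $\cT_j$, giving \ref{item:KR1}, and it is precisely what lets each atom of $\cT_{j+1}$ sit inside a single atom of $\cT_j$: since no $\cW_j$-word straddles a $\cW_{j+1}$-cut, the $\cW_j$-word covering coordinate $0$ lies inside the current $\cW_{j+1}$-word $w$, while recording the previous word $v$ resolves the one remaining ambiguity at the left boundary of $w$. This is the role of the pair $(v,w)$ and the source of the square. The growing lengths yield \ref{item:KR3} and \ref{item:KR4}, while minimality of $X$ together with the large gaps gives primitivity. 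Feeding $(\cT_j)_j$ into the construction of \cref{sec:BVrepresentation} produces a simple, properly ordered Bratteli diagram with at most $K^2$ vertices per level, whence the topological rank of $(X,S)$ is at most $K^2$.

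I expect the main obstacle to be the alignment requirement: producing recognizable sets of the minimal cardinality $K$ at arbitrarily large scales whose factorizations refine the previous level. This amounts to controlling the recognizable sets of the desubstituted subshifts $Y$ by $K(X)$, and it is what forces the pair-indexing $\llbracket v,w\rrbracket$; without alignment a $\cW_j$-word could straddle a $\cW_{j+1}$-cut, the bookkeeping would then require triples of consecutive words, and the bound would degrade from $K^2$ to $K^3$.
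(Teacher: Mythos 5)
Your overall architecture (towers indexed by ordered pairs of consecutive $\cW$-words, which is where the exponent $2$ comes from, plus a verification of \ref{item:KR1}--\ref{item:KR4}) is the same as the paper's, but the proposal rests on the \emph{alignment requirement} --- that the cuts of the adapted $\cW_{j+1}$-factorization be a subset of the cuts of the $\cW_j$-factorization --- and this is a genuine gap, not a technical loose end. The definition of $K(X)$ only provides, at each large scale, \emph{some} recognizable set of cardinality $K$ coming from \emph{some} pair $(Y,\tau)$; the sets at different scales need have no compatibility whatsoever, and there is no reason a $\cW_{j+1}$-word should be a concatenation of $\cW_j$-words. Any attempt to force alignment (e.g.\ by passing to return words or compositions) risks inflating the cardinality above $K$, which would destroy the bound $K^2$. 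You correctly identify this as the main obstacle, but the remark that it ``amounts to controlling the recognizable sets of the desubstituted subshifts $Y$ by $K(X)$'' is not a construction, and without it the nesting \ref{item:KR1} and the refinement \ref{item:KR2} are unproved.

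The paper's proof shows how to dispense with alignment entirely, and this is the idea you are missing. Instead of anchoring the tower of $(v,w)$ at the factorization cut (left edge of $w$), one chooses inside each sufficiently long word $w\in\cW_n$ an occurrence of a fixed marker $l_{n-1}v_{n-1}w_{n-1}^{-}w_{n-1}^{+}r_{n-1}$ (which exists in every word of $\cW_n$ for $n$ large, by minimality, since the marker has fixed length and the words of $\cW_n$ are arbitrarily long), and splits $w=w^-w^+$ at that occurrence. The tower of $(v,w)$ then has base $\llbracket v, w^-.w^+\rrbracket$ and height $|w^-|+|v^+|$, so it straddles the factorization cut rather than starting at it. Condition \eqref{item:versunpoint} of the paper then gives \ref{item:KR1} and \ref{item:KR3} for free, because every level-$n$ base sits inside the \emph{same} level-$(n-1)$ atom $\llbracket v_{n-1},w_{n-1}^-.w_{n-1}^+\rrbracket$; and \ref{item:KR2} follows not from alignment but from recognizability: since the marker contains buffers $l_{n-1},r_{n-1}$ of length $R_{n-1}$, all points of a given level-$n$ atom agree on a window of radius $R_{n-1}$ around the anchor, so by \cref{lem:rayon-rec} they share the same $\cW_{n-1}$-factorization throughout the tower, which is all that \ref{item:KR2} needs. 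Your pair-indexing and the $K^2$ bound survive intact in that argument, but the decomposition point $w=w^-w^+$ and the marker trick are essential; without them your plan does not close.
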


\begin{proof}
We assume that $K=K(X)$ is finite, which is the unique relevant case. Under this assumption 
there exists a sequence $(\cW_n)_{n\geq 1}$ of subsets of $\cB=\cL(X)$ such that  
$|\cW_n|=K$, $\lim_{n\to +\infty} \inf_{w \in \cW_n} |w| = +\infty$, and $\cW_n$ is recognizable in $X$ 
with respect to a subshift $Y_n \subseteq \cA_{n}^\Z$ and a morphism $\tau_n:\cA_n^*\to \cB^*$ where $\tau_n(\cA_n)=\cW_n$.
For all $n\geq 1$ let $R_n$ be a recognizability constant for $\tau_n$.
We can assume
that  the sequence $(R_n)_{n\geq 1}$ is increasing. 

Now we will suppose, taking a subsequence of $(\cW_n)_{n\geq 1}$ if needed, that all $w\in \bigcup_{n\geq 1} \cW_n$ decomposes  as $w=w^- w^+$ in such a way that: 
\begin{enumerate}
\item
\label{item:croit} for all $n\geq 2$, 
$\max_{w \in \cW_{n-1}}(|w^-|,|w^+|)<\min_{w\in \cW_n}(|w^-|,|w^+|)/2$;
\item for all $n\geq 2$, 
\label{item:versunpoint}
$$
\bigcup_{w\in \cW_{n}} \llbracket w^-.w^+ \rrbracket \subseteq 
\llbracket v_{n-1} , w_{n-1}^-.w_{n-1}^+ \rrbracket
\cap [l_{n-1} v_{n-1} w_{n-1}^- . w_{n-1}^+ r_{n-1}]
$$
for some $w_{n-1}, v_{n-1}  \in \cW_{n-1}$ and $l_{n-1}, r_{n-1} \in \mathcal{L}_{R_{n-1}} (X)$;
\item \label{item:partition}
for all $n\geq 1$, the set 
$$
\cT_n
=
\{S^{-i} \llbracket v,w^-.w^+ \rrbracket: v,w \in \cW_n ,0\leq i < |w^-|+|v^+|
\}
$$
is a partition of $X$.
\end{enumerate}
First we check that once we have a sequence $(\cW_n)_{n\geq 1}$ with these properties, then the sequence of partitions $(\cT_n)_{n\geq 0}$, with $\cT_0=\{X\}$, is a nested sequence of CKR partitions, ({\it i.e.}, it satisfies (KR1)-(KR4)). 
This will prove the theorem. 
After we will show how to build such a sequence $(\cW_n)_{n\geq 1}$.

Let $n\geq 1$. We order in a particular way the sets in $\cT_n$ to get the structure of Kakutani-Rohlin towers. 
Each tower is indexed by pairs of words $v,w \in \cW_n$ such that $vw$ appears in a $\cW_n$-factorization in $X$ adapted to $(Y_n,\tau_n)$. 
For such $v,w \in \cW_n$ the associated tower is given by: 
$$
\cT_n(v,w)=\{S^{-i} \llbracket v,w^-.w^+ \rrbracket \ : \ 0\leq i < |w^-|+|v^+| \}.
$$
In the nomenclature of Kakutani-Rohlin partitions the base of this tower is given by 
$\llbracket v,w^-.w^+ \rrbracket$. Observe that this set is clopen by \cref{lem:localrec}.
Thus we have defined at most $K^2$ towers per level. 

By Property \eqref{item:versunpoint} the sequence of towers satisfies \ref{item:KR1}.

Now we prove \ref{item:KR2} holds. 
Let $n\geq 2$, $v,w \in \cW_n$ and $i\in [0 ,|w^-| + |v^+| )$ be an integer.  
From property \eqref{item:versunpoint}, since the length of $l_{n-1}$ and $r_{n-1}$ are $R_{n-1}$, all elements $x,x' \in S^{-i} \llbracket vw^-.w^+ \rrbracket$ satisfy 
$x_{[-R_{n-1}, R_{n-1})}= x'_{[-R_{n-1}, R_{n-1})}$.
Hence the unicity of the factorization into $\cW_{n-1}$ words (\cref{lem:rayon-rec})  and Property \eqref{item:croit} imply there is a sequence of  words  $u_1, \ldots, u_{\ell} \in \cW_{n-1}$ with $u_1 =u_{\ell}= w_{n-1}$
such that any set $S^{-i} \llbracket v,w^-.w^+ \rrbracket$ in
$\cT_n(v,w)$ is contained in some $S^{-j} \llbracket u_k,u_{k+1}^-.u_{k+1}^+ \rrbracket \in 
\cT_{n-1}(u_k,u_{k+1})$ for some $0\leq k \leq \ell$, with $u_0= v_{n-1}$ (Property \eqref{item:versunpoint}).
This proves \ref{item:KR2} and provides  
$
v^+w^-=u_1^+u_2\cdots u_{\ell-1}u_{\ell}^-.$

Property \ref{item:KR3} is deduced from properties  \eqref{item:croit} and \eqref{item:versunpoint}. Indeed, Property \eqref{item:versunpoint} implies that for $n\geq 2$ and $v, w \in \cW_n$ as before,
$$
\llbracket v,w^-.w^+ \rrbracket \subseteq \llbracket w^-.w^+ \rrbracket 
\subseteq \llbracket v_{n-1}, w_{n-1}^-.w_{n-1}^+ \rrbracket,
$$
for fixed words $v_{n-1},w_{n-1} \in \cW_{n-1}$, and  Property  \eqref{item:croit} implies that 
the lengths of $w_{n-1}^-$ and $w_{n-1}^+$ grow to infinity with $n$. 

Finally, to show \ref{item:KR4} it is enough to observe from Property \eqref{item:versunpoint} that each atom of ${\mathcal T}_n$ of the form $S^{-i}\llbracket v, w^-.w^+\rrbracket$ is included into a classical cylinder set of the form $[u.u']$ for words $u$ and $u'$ whose lengths are greater than $\min\{|w_{n-1}^+|, |w_{n-1}^-|\}$ (going to infinity with $n$  by Property \eqref{item:croit}). So the diameter of each atom of  ${\mathcal T}_n$ decreases to $0$ which shows \ref{item:KR4}. 
\smallskip

Now we describe  how to construct  from the  sequence of  finite sets of  words $(\cW_n)_{n\geq 1}$ at the beginning of the proof a sub-sequence $(\cW_{n_i})_{i\geq 1}$ satisfying Properties \eqref{item:croit}-\eqref{item:partition}. 
This is done recursively. 

Let $n_1>0$ be an integer such that $\inf_{w \in \cW_{n_1}} |w| >2$. 
For any word $w \in \cW_{n_1}$, we choose an arbitrary decomposition $w = w^-w^+$ such that $|w^+|$,$|w^-| \ge |w|/2-1$.
We only have Property \eqref{item:partition} to check. It is obviously done by the 
recognizability of the set $\cW_{n_1}$ with respect to $(Y_{n_1},\tau_{n_1})$.

Let us assume we have constructed $\cW_{n_1},\ldots, \cW_{n_\ell}$ with the desired properties. 
We choose two arbitrary words $v_{n_\ell}$ and $w_{n_{\ell}}$  in $\cW_{n_{\ell}}$ such that $v_{n_\ell}w_{n_{\ell}}$ appears in a  $\cW_{n_\ell}$-factorization of $X_{n_\ell}$ adapted to $(Y_{n_\ell},\tau_{n_\ell})$.
Let $l_{n_\ell}$ and  $r_{n_\ell}$ be two words of length $R_{n_\ell}$ such that $l_{n_\ell}v_{n_\ell}w_{n_\ell}r_{n_\ell}$ belongs to  ${\mathcal L}(X)$ and $[l_{n_\ell}v_{n_\ell}w_{n_\ell}^-.w_{n_\ell}^+r_{n_\ell}] \subset \llbracket v_{n_\ell},w_{n_\ell}^-.w_{n_\ell}^+\rrbracket$ (\cref{lem:rayon-rec}).

By minimality of $(X,S)$, since the lengths of the words of $\cW_n$ grow to infinity,  for any large enough integer $n$ (greater than some $N_\ell$), any word $w$ in $\cW_n$ contains an occurrence of $l_{n_\ell}v_{n_\ell}w_{n_\ell}r_{n_\ell}$.  Choosing such an occurrence leads to decompose $w =w^-w^+$, where the words $w^-$  and $w^+$  contain respectively  the word $l_{n_\ell}v_{n_\ell}w_{n_\ell}^-$  and $w_{n_\ell}^+r_{n_\ell}$ as a suffix and as a prefix respectively. 
Moreover, taking $n$ large enough, we can also assume that the occurrence is far away from the border of $w$  so that 
$$\max_{w \in \cW_{n_{\ell}}}(|w^-|,|w^+|)<\min_{w\in \cW_n}(|w^-|,|w^+|)/2.$$
We choose one integer $n_{\ell +1}$ realizing this. The last inequality ensures Property \eqref{item:croit} and the construction ensures Property \eqref{item:versunpoint}. 

To show $\cT_{n_{\ell+1}}$ is a partition observe that given $x\in X$, the recognizability of $\cW_{n_{\ell+1}}$ in $X$ with respect to $(Y_{n_{\ell+1}},\tau_{n_{\ell+1}})$ implies  there are unique 
words in $v, w\in \cW_{n_{\ell+1}}$ and $0\leq i < |w^-|+|v^+|$ such that 
$S^i x $ belongs to $ \llbracket v, w^-.w^+ \rrbracket$. Hence $\cT_{n_{\ell+1}}$ covers the space $X$. 
By construction all the sets $S^{-i} \llbracket v, w^-.w^+ \rrbracket$ are pairwise disjoint.
\end{proof}

We deduce the following theorem.

\begin{theo}\label{theo:FRankSuff}
Let $(X,S)$ be a minimal subshift. It has finite topological rank if and only if $K(X)$ is finite.
\end{theo}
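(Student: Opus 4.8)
The plan is to prove the equivalence in \cref{theo:FRankSuff} by establishing both implications, with \cref{lem:sufforfiniterank} already supplying the harder direction. For the ``if'' direction, suppose $K(X)$ is finite. Then \cref{lem:sufforfiniterank} directly gives that the topological rank of $(X,S)$ is bounded above by $K(X)^2 < +\infty$, so $(X,S)$ has finite topological rank. This half is immediate from the construction just completed, so the real content lies in the converse.

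For the ``only if'' direction, I would assume that $(X,S)$ has finite topological rank $d$ and deduce that $K(X) \le d$ (in particular finite). By the definition of finite topological rank and the Bratteli--Vershik machinery of \cref{subsec:Bratteli-Vershik}, there is a Bratteli--Vershik representation $B = (V,E,\preceq)$ of $(X,S)$ with $|V_n| \le d$ for all $n$, and the system is conjugate to one of the level subshifts $(X_B^{(n)},S)$ by \cref{DowMaass} in the expansive case. The key is to produce, for every threshold $n_0$, a recognizable set $\cW$ of words each of length at least $n_0$ with $|\cW| \le d$. The natural candidate is the set of words read along the towers of a high level of the diagram: for a level $N$ (chosen large via telescoping so that the tower heights $h_N(k)$ exceed $n_0$), take $\cW = \{\tau^B_{[0,N)}(v) : v \in V_N\}$, the collection of words labeling the $d_N \le d$ towers at level $N$. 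These words have length equal to the tower heights, which grow without bound under telescoping, so $\inf_{w\in\cW}|w|$ can be made as large as desired.

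The main obstacle will be verifying that such a set $\cW$ is genuinely \emph{recognizable in $X$} in the precise sense defined in \cref{subsec:recogformorphisms}, namely that $X$ is the smallest subshift containing $\tau_{[0,N)}(Y)$ for the appropriate $Y = X_{\boldsymbol{\tau}}^{(N)}$ and that $\tau_{[0,N)}$ is recognizable in $Y$. The inclusion and minimality are built into the Bratteli--Vershik structure, since $\cup_{k} S^k \tau^B_{[0,N)}(X_B^{(N)}) = X_B^{(0)} = X$ as recorded in \cref{sec:BVrepresentation}. For recognizability of the composed morphism $\tau_{[0,N)} = \tau_0^B \circ \cdots \circ \tau_{N-1}^B$, I would invoke \cref{lem:CompMorphRec} to reduce to recognizability at each level, combined with \cref{theo:bsty}, which guarantees that at most $\log_2 d$ of the morphisms $\tau_n^B$ fail to be recognizable for aperiodic points; since $(X,S)$ is an aperiodic minimal subshift, telescoping past these finitely many bad levels yields an $N$ beyond which $\tau_{[0,N)}$ is recognizable in $X_{\boldsymbol{\tau}}^{(N)}$. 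This gives $\cW \subset \cup_{k \ge n_0}\cL_k(X)$ recognizable with $|\cW| \le d$, hence $K(X) \le d$, completing the proof.

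I should remark that the telescoping step requires a little care to simultaneously arrange that the morphisms are proper (so the level subshift genuinely codes the dynamics), that the heights exceed the prescribed threshold, and that we are past the non-recognizable levels; all three are achieved by passing to a sufficiently deep telescoping, and none introduces new vertices so the bound $d$ on the alphabet size is preserved throughout.
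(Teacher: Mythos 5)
Your overall strategy matches the paper's: the ``if'' direction is indeed immediate from \cref{lem:sufforfiniterank}, and for the ``only if'' direction the paper also takes $\cW_n=\{\tau^B_{[0,n)}(v):v\in V_n\}$ as the candidate recognizable sets. However, your justification of recognizability has a genuine gap. You propose to get recognizability of $\tau_{[0,N)}$ in $X^{(N)}_{\boldsymbol{\tau}}$ from \cref{theo:bsty} by ``telescoping past'' the at most $\log_2 d$ non-recognizable levels. This cannot work: by \cref{lem:CompMorphRec}, $\tau_{[0,N)}=\tau_{[0,M)}\circ\tau_{[M,N)}$ is recognizable in $X^{(N)}_{\boldsymbol{\tau}}$ \emph{if and only if} $\tau_{[M,N)}$ is recognizable in $X^{(N)}_{\boldsymbol{\tau}}$ \emph{and} $\tau_{[0,M)}$ is recognizable in $X^{(M)}_{\boldsymbol{\tau}}$. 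So if a non-recognizable morphism sits at an early level, no choice of $N$ and no telescoping (which merely replaces consecutive morphisms by their composition) repairs it; discarding the early levels instead would replace $X$ by $X^{(M)}$, a different subshift. The conclusion you want is nevertheless true, but for a different reason: since (after telescoping, via \cref{DowMaass}) $\phi_0\colon X_B\to X_B^{(0)}$ is a homeomorphism and the preimages of the sets $\tau^B_{[0,n)}([v])$ are the (clopen) tower bases of the nested CKR partitions, criterion (2) of \cref{lem:mosserec} applies directly and shows that \emph{every} $\tau^B_{[0,n)}$ is recognizable; there are no bad levels to avoid. This is the argument the paper uses, and it is also the content of \cref{theo:equivalenceSadicvsFiniteRank2}.

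A second omission: the sets $\cW_n$ you build consist of words over the alphabet $E_1$ of $X_B^{(0)}$, whereas $K(X)$ is defined via recognizable subsets of $\cL(X)$, and $X$ lives over its own alphabet. The conjugacy $X_B^{(0)}\to X$ is not the identity, so you must transport the $\cW_n$ through it. The paper does this with the Curtis--Hedlund--Lyndon block code, replacing each $\cW_n$ by a set $\hat{\cW}_n$ of trimmed images of triples $uwv$ occurring in the factorizations; this preserves recognizability but only yields $|\hat{\cW}_n|\le K(X_B^{(0)})^3$, so your claimed bound $K(X)\le d$ is not what comes out (finiteness, which is all the theorem needs, does). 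Without some such transfer step the ``only if'' direction is incomplete as written.
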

\begin{proof}
The sufficient condition follows directly from \cref{lem:sufforfiniterank}. 

To prove the necessity of the condition assume $(X,S)$ has finite topological rank and that it is given by a Bratteli-Vershik representation $B=(V,E,\preceq)$ where the number of vertices per level is uniformly bounded by the rank.
Moreover, by Theorem \ref{DowMaass}, after telescoping the diagram and because $(X_B , V_B)$ is expansive, we can assume $(X_B,V_B)$ is topologically conjugate to the subshift $(X^{(0)}_B,S)$, $X_B^{(0)}\subseteq E_1^{\Z}$, defined by coding orbits of the Bratteli-Vershik representation on the first level of the diagram via the map $\phi_0$ (we follow  the notations in Section \ref{sec:BVrepresentation}).

We claim it is enough to show that $K(X_B^{(0)})$ is finite. 
Indeed, assume there is a sequence $(\cW_{n})_{n\geq 1}$ of subsets of $\cB=\cL(X_B^{(0)})$ such that  for all $n\geq 1$, $|\cW_{n}| \leq K(X_B^{(0)}) < + \infty$, $\lim_{n\to +\infty} \inf_{w \in \cW_{n}} |w| = +\infty$ and 
 $\cW_{n}$ is recognizable in $X_B^{(0)}$ with respect to a subshift $Y_{n} \subseteq \cA_{{n}}^\Z$ and a morphism $\tau_n:\cA_n^*\to \cB^*$ with $\tau_n(\cA_n)=\cW_n$.
 By the Curtis-Hedlund-Lyndon theorem, the conjugacy map $\phi \colon X_B^{(0)} \to X$ is given by a {\em block code} $\hat{\phi} \colon \cL_{2r+1}(X^{(0)}) \to \cL_{1}(X)$  of some radius $r$: for all $i\in \mathbb{Z}$
 $$
 \phi (x)_i = \hat{\phi} (x_{[i-r , i+r]}) .
 $$
 For any large enough $n$, the length of any word in $\cW_{n}$ is larger than $2r+1$. 
Consider a triple of words $u,w,v \in \cW_{n}$ such that $uwv$ appears in a $\cW_{n}$-factorization in $X_B^{(0)}$ adapted to $(Y_{n},\tau_{n})$. Its  image $\hat{\phi}(uwv)$ by the local map  is of the form $\hat u \hat w \hat v \in \cL_{|u|-r+|w|+|v|-r}(X)$ with $|\hat w|=|w|$, $|\hat u|=|u|-r$ and $|\hat v|=|v|-r$. Thus we define $\hat \cW_{n}$ to be the collection of  words $\hat w$ built in this way. It is clear from construction that such sets of words have cardinality at most $K(X_B^{(0)})^3$ and since the inverse of the map $\phi$  is 
also given by a block code,
the set $\hat \cW_{n}$ is recognizable in $X$ with respect to $(Y_n,\tau_n)$. 
It follows that $K(X)$ is finite.

Now we produce from the Bratteli-Vershik representation $B$ the sets of words $(\cW_{n})_{n\geq 1}$ as described before. 
Recall that $X_B^{(0)}$ is the $\cS$-adic subshift generated by the sequence $\boldsymbol{\tau}^B= (\tau_n^B:V^*_{n+1}\to V_n^*)_{n\geq 0}$ of morphisms read on $B$.
For each $n\geq 1$ we define $\cW_n$ to be the set of words $\tau^B_{[0,n)}(v) \in E_1^\Z$  with $v \in V_{n}$. 
Clearly the cardinality of each $\cW_n$ is bounded by the topological rank of $(X,S)$.
Recall that the map $\phi_0 \colon X_B \to X_B^{(0)}$ is a homeomorphism.   
It is simple to check that the preimage by $\phi_0$ of the sets 
$\tau^B_{[0,n)}([v])$, $v \in V_{n}$, are the roofs of towers of level $n$  in the Bratteli-Vershik representation of the system and in particular are clopen sets. Thus, by Lemmas \ref{lem:mosserec} and \ref{lem:fact-rec}, the set $\cW_n$ is recognizable in $X_B^{(0)}$ with respect to $(X_B^{(n)},\tau^B_{[0,n)})$.
Since the sequence $\boldsymbol{\tau}^B$ is primitive, the lengths of the words in $\cW_n$ grow with $n$ to infinity. This construction shows that $K(X_B^{(0)})$ is finite and thus the theorem holds.
\end{proof}

\subsection{Proof of Theorem \ref{theo:equivalenceSadicvsFiniteRank} }

We have all the elements to prove Theorem \ref{theo:equivalenceSadicvsFiniteRank}.  
Actually  Part \eqref{theo:partone} of Theorem \ref{theo:equivalenceSadicvsFiniteRank} comes from a property  (\cref{theo:equivalenceSadicvsFiniteRank2}) available   for Cantor minimal systems of  eventually infinite rank. 
This proposition provides a necessary condition on a Bratteli-Vershik system to be expansive in terms of recognizability of the sequence of morphisms read on the diagram. This condition is not too far from being sufficient, but the morphisms need to be proper  to get the converse (see \cref{prop:DurandLeroy2.3}). The results are presented in  Subsection \ref{sec:FRtoSadic}. 
The second subsection is devoted to the second part of Theorem \ref{theo:equivalenceSadicvsFiniteRank} stated as \cref{prop:SadictoFR}.  

\subsubsection{From finite topological rank systems to $\cS$-adic systems}\label{sec:FRtoSadic}

First, we recall a useful property to identify a Bratteli-Vershik system with a $\cS$-adic subshift.
We call a {\em hat-morphism} any morphism $\tau \colon \cA^* \to \cB^*$ such that for all $a,b \in \cA$ the letters occurring in $\tau(a)$ and $\tau(b)$ are all distinct. For instance the morphism read at the first level on an ordered Bratteli diagram is a  hat-morphism (see Section \ref{sec:BVrepresentation}). 
The next result is a slight modification of \cite[Proposition  2.2]{Durand&Leroy:2012}.

\begin{prop}[{\cite[Proposition  2.2]{Durand&Leroy:2012}}]\label{prop:DurandLeroy2.3}
Let $\boldsymbol{\tau}= (\tau_{n}:\cA_{n+1} \to \cA_n)_{n\ge 0}$ be a recognizable  directive sequence of  morphisms  such that  $\tau_0$ is a hat-morphism and $(\tau_{n})_{n\ge 1}$ is proper. 
Then, the $\cS$-adic subshift generated by $\boldsymbol{\tau}$ is topologically conjugate to the Bratteli-Vershik system $(X_{B}, V_{B})$ where $B$ is the Bratteli diagram such that $\boldsymbol{\tau}$ is the sequence of morphisms read on $B$.
\end{prop}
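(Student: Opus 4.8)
The plan is to construct the ordered Bratteli diagram $B$ by hand so that $\boldsymbol{\tau}^B=\boldsymbol{\tau}$, to recognize the canonical factor map $\phi_0$ of \cref{sec:BVrepresentation} as the candidate conjugacy, and finally to promote $\phi_0$ to a homeomorphism using recognizability. First I would set $V_0=\{v_0\}$ and $V_n=\cA_n$ for $n\ge 1$, and read the edges off the morphisms: for $n\ge 0$ and $v\in\cA_{n+1}$, writing $\tau_n(v)=a_1\cdots a_\ell$ with $a_i\in\cA_n$, I create $\ell$ edges into $v$ with sources $a_1,\dots,a_\ell$ and local order $e_1(v)\prec\cdots\prec e_\ell(v)$. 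Since $\tau_0$ is a hat-morphism, the letters occurring in the various $\tau_0(v)$ are pairwise distinct, so (discarding letters of $\cA_0$ that never occur) $\cA_0$ is canonically identified with $E_1$ and one checks directly that $\boldsymbol{\tau}^B=\boldsymbol{\tau}$. The properness of $(\tau_n)_{n\ge 1}$ then forces a unique maximal and a unique minimal path: if every $\tau_n(v)$ ends (resp. starts) with the same letter $q_n$ (resp. $p_n$), then each maximal (resp. minimal) edge at level $n+1$ has source $q_n$ (resp. $p_n$), which pins down level by level the vertices and the extremal edges of the two special paths. Hence $B$ is properly ordered, $(X_B,V_B)$ is a genuine Bratteli--Vershik system, and by the last paragraph of \cref{sec:BVrepresentation} we have $X_B^{(0)}=X_{\boldsymbol{\tau}}$ together with a factor map $\phi_0\colon(X_B,V_B)\to(X_{\boldsymbol{\tau}},S)$.

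As $X_B$ is compact and $X_{\boldsymbol{\tau}}$ is metrizable, and $\phi_0$ is a continuous surjection intertwining $V_B$ and $S$, it suffices to prove that $\phi_0$ is injective, which I would do by reconstructing a path $x\in X_B$ from $y^{(0)}=\phi_0(x)$ using the level-wise recognizability recalled in \cref{subsec:recogforDirSequences}. Recognizability of $\tau_0$ in $X_{\boldsymbol{\tau}}^{(1)}$ gives a unique pair $(k_0,y^{(1)})$ with $y^{(1)}\in X_{\boldsymbol{\tau}}^{(1)}$, $0\le k_0<|\tau_0(y^{(1)}_0)|$ and $y^{(0)}=S^{k_0}\tau_0(y^{(1)})$; writing $u_1=y^{(1)}_0\in V_1$, the coordinate $y^{(0)}_0=\pi_1(x)=x_1$ sits at position $k_0$ in $\tau_0(u_1)=e_1(u_1)\cdots e_\ell(u_1)$, so $x_1=e_{k_0+1}(u_1)$ (the hat-morphism property makes this reading unambiguous). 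Iterating, recognizability of $\tau_n$ in $X_{\boldsymbol{\tau}}^{(n+1)}$ produces a unique $(k_n,y^{(n+1)})$ with $y^{(n)}=S^{k_n}\tau_n(y^{(n+1)})$; setting $u_{n+1}=y^{(n+1)}_0$ and using $\tau_n(u_{n+1})=\source(e_1(u_{n+1}))\cdots\source(e_\ell(u_{n+1}))$, the identity $y^{(n)}_0=u_n=\source(e_{k_n+1}(u_{n+1}))$ gives $x_{n+1}=e_{k_n+1}(u_{n+1})$. Equivalently, the map $\Psi(y^{(0)})=\bigl(e_{k_0+1}(u_1),e_{k_1+1}(u_2),\dots\bigr)$ built from this data lands in $X_B$, since consecutive edges compose thanks to $\source(e_{k_n+1}(u_{n+1}))=u_n=\range(e_{k_{n-1}+1}(u_n))$; one then checks $\Psi=\phi_0^{-1}$, so that $\phi_0$ is a homeomorphism conjugating $V_B$ and $S$.

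The delicate point, and the step I expect to be the main obstacle, is the bookkeeping that makes this reconstruction agree with the actual Bratteli--Vershik dynamics: one must verify that the cut $k_n$ delivered by $\tau_n$-recognizability is exactly the local-order index of the path edge at $u_{n+1}$, that $\phi_0\circ\Psi$ and $\Psi\circ\phi_0$ are the identity on all coordinates (not merely the zeroth), and that the nested centered representations $y^{(n)}=S^{k_n}\tau_n(y^{(n+1)})$ are compatible across levels. All of this reduces to the defining identity of $\boldsymbol{\tau}^B$ together with the fact, recalled in \cref{subsec:recogforDirSequences}, that recognizability of $\boldsymbol{\tau}$ is equivalent to the uniqueness of the centered $\tau_{[0,n)}$-representation of each point of $X_{\boldsymbol{\tau}}$ in $X_{\boldsymbol{\tau}}^{(n)}$, these representations being precisely the nested tower data that encode a Bratteli path.
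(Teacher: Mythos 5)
The paper does not actually prove this proposition: it is imported verbatim (as ``a slight modification'') from \cite[Proposition~2.2]{Durand&Leroy:2012}, so there is no in-paper argument to compare against. Your reconstruction is correct and follows what is essentially the standard route: build $B$ so that $\boldsymbol{\tau}^B=\boldsymbol{\tau}$ (using the hat-morphism property to identify $\cA_0$ with $E_1$), use properness of $(\tau_n)_{n\ge 1}$ to get the unique maximal and minimal paths and hence a well-defined Vershik map, and then upgrade the canonical factor map $\phi_0$ to a conjugacy by showing injectivity via recognizability. The inductive recovery of the edge $x_{n+1}=e_{k_n+1}(u_{n+1})$ from the centered representation of $y^{(n)}$ is exactly right: the cut $k_n$ equals the number of level-$n$ towers already crossed inside the current level-$(n+1)$ tower, which is the local-order index of $x_{n+1}$ minus one.

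Two small points you should make explicit if you write this up in full. First, recognizability only gives \emph{uniqueness} of centered representations; the \emph{existence} of the representations $y^{(n)}=S^{k_n}\tau_n(y^{(n+1)})$ that you feed into the uniqueness statement must come from the Bratteli--Vershik tower structure itself (the identity $X_B^{(n)}=\bigcup_k S^k\tau_n^B(X_B^{(n+1)})$ of \cref{sec:BVrepresentation}), and one must check that the tower-derived pair is indeed \emph{centered}, i.e.\ $0\le k_n<|\tau_n(y^{(n+1)}_0)|$ --- which it is, by the height decomposition. Second, your argument identifies ``the $\cS$-adic subshift generated by $\boldsymbol{\tau}$'' with the orbit-coding subshift $X_B^{(0)}$; this is asserted in \cref{sec:BVrepresentation} and is where the surjectivity of $\phi_0$ onto $X_{\boldsymbol{\tau}}$ resides, so it deserves a citation rather than being absorbed silently. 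Neither point is a gap in the logic, only in the bookkeeping you yourself flagged.
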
 

Here is a converse to
\cref{prop:DurandLeroy2.3}.

\begin{prop}\label{theo:equivalenceSadicvsFiniteRank2}
Let $(X,T)$ be a minimal Cantor system given by a Bratteli-Vershik representation $B$. If $(X,T)$ is expansive, then the sequence of morphisms  $\boldsymbol{\tau}^B = (\tau_n^B:V_{n+1}^*\to V_n^*)_{n\geq 0}$  read on $B$ is eventually  recognizable. 
Moreover, $(X,T)$ is topologically conjugate to the $\cS$-adic subshift generated by a proper recognizable directive sequence obtained from $\boldsymbol{\tau}^B$ after a telescoping of $B$.
\end{prop}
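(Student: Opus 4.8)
The plan is to exploit expansiveness to telescope $B$ so that the level-$1$ orbit coding becomes injective, turning it into a topological conjugacy onto an $\cS$-adic subshift; recognizability then follows from the clopenness of tower roofs via \cref{lem:mosserec} and \cref{lem:CompMorphRec}.

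First I would replace $(X,T)$ by the conjugate system $(X_B,V_B)$ and record that, $X$ being an infinite minimal Cantor set, every point is aperiodic. Writing $d$ for the usual path metric on $X_B$ (two paths are $2^{-\ell}$-close when they share their first $\ell$ edges), expansiveness gives a constant $c>0$; choosing $L$ with $2^{-L}<c$ one gets that two paths whose orbits agree on their first $L$ edges at \emph{every} time must coincide, since otherwise $\sup_k d(V_B^k x, V_B^k x')\le 2^{-L}<c$ would contradict expansiveness. In other words, \emph{reading the first $L$ edges along the orbit separates points}. I would then telescope $B$ between levels $0$ and $L$: in the resulting diagram $B'$ the new first-level alphabet is the set of paths $E_{0,L}$, and the separation property says exactly that the level-$1$ orbit-coding factor map $\phi_0^{B'}\colon X_{B'}\to X_{B'}^{(0)}$ is injective. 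Being a continuous equivariant surjection between compact metric spaces, $\phi_0^{B'}$ is then a topological conjugacy between $(X_{B'},V_{B'})$ and the $\cS$-adic subshift $(X_{B'}^{(0)},S)=(X_{\boldsymbol{\tau}^{B'}},S)$. Converting the abstract expansiveness constant into a concrete finite telescoping level $L$ is where I expect the only real work to lie; everything afterwards is bookkeeping with the recognizability lemmas.

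With $\phi_0^{B'}$ a homeomorphism, I would invoke the observation used in the proof of \cref{theo:FRankSuff}: for every level $n$ and every vertex $v\in V_n$ the $\phi_0^{B'}$-preimage of $\tau_{[0,n)}^{B'}([v])$ is the roof of the $n$-th level tower indexed by $v$, which is clopen in $X_{B'}$. Hence each $\tau_{[0,n)}^{B'}([v])$ is clopen in $X_{B'}^{(0)}$, and \cref{lem:mosserec} shows that the composite morphism $\tau_{[0,n)}^{B'}$ is recognizable in $X_{B'}^{(n)}$ for every $n$. Peeling off factors with \cref{lem:CompMorphRec} (recognizability of a composition forces recognizability of each factor) then yields that every $\tau_i^{B'}$ is recognizable in $X_{B'}^{(i+1)}$, so $\boldsymbol{\tau}^{B'}$ is recognizable. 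Translating back to $B$, the morphisms $\tau_i^{B'}$ with $i\ge 1$ are precisely the original $\tau_{L-1+i}^{B}$, so $\tau_k^{B}$ is recognizable in $X_B^{(k+1)}$ for all $k\ge L$. Note that the first telescoped morphism is read over the path alphabet $E_{0,L}$ rather than over the edge alphabet of $X_B^{(0)}$, so the peeling does not reach the original levels below $L$; this is exactly why only \emph{eventual} recognizability is claimed.

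Finally, for the \emph{moreover} part I would telescope $B'$ once more, now above level $1$, to make the morphisms $(\tau_n)_{n\ge 1}$ proper; this is the standard consequence of the uniqueness of the maximal and minimal paths recalled in \cref{sec:BVrepresentation}, while the first morphism stays a hat-morphism and the new higher-level morphisms, being compositions of recognizable ones, remain recognizable by \cref{lem:CompMorphRec}. The resulting directive sequence is proper and recognizable, is obtained from $\boldsymbol{\tau}^{B}$ purely by telescoping, and generates the subshift $X_{B'}^{(0)}$ (alternatively one may simply quote \cref{prop:DurandLeroy2.3}). Since telescoping preserves the conjugacy class of the Bratteli–Vershik system, this subshift is topologically conjugate to $(X,T)$, which completes the argument.
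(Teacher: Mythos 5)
Your proposal is correct and follows essentially the same route as the paper: reduce to the level-$0$ coding subshift via telescoping, deduce recognizability of the composites $\tau^{B}_{[0,n)}$ from the clopenness of the tower roofs through \cref{lem:mosserec}, and peel off individual morphisms with \cref{lem:CompMorphRec}. The only substantive difference is that you spell out the expansiveness-constant argument showing the coding by initial paths is injective (which the paper simply asserts) and you telescope for properness after, rather than before, establishing recognizability; both orderings work.
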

\begin{proof}
Let us assume that  $(X,T)$ is an expansive minimal Cantor system. Then, the system is topologically conjugate to $(X_B^{(N)},S)$ for some $N\geq 0$. Thus, by telescoping finitely many levels of the diagram $B$, we can assume that $N=0$, so $(X,T)$ is topologically  conjugate to $(X_B^{(0)},S)$ (see Section \ref{sec:BVrepresentation} for notations). In addition, by simplicity of $B$, after another telescoping we can assume that the directive sequence $\boldsymbol{\tau}^B =(\tau_n^B:V_{n+1}^*\to V_n^*)_{n\geq 0}$ of morphisms read on $B$ is proper. 
To conclude we only need to prove that 
$\boldsymbol{\tau}^B$ is recognizable. The result follows after interpreting the telescoping process. 

Recall that under our assumptions for each $n\geq 0$ the subshift $(X_B^{(n)},S)$ is minimal and $X_B^{(n)}$ is the smallest subshift containing $\tau^B_n(X_B^{(n+1)})$. This implies that $X_B^{(0)}$ is the smallest subshift containing $\tau^B_{[0,n+1)}(X_B^{(n+1)})$. 
Since $\tau^{B}_{[0,n+1)}([a])$ is a clopen set for any letter $a \in  V_{n+1}$ (just consider its preimage by the homeomorphism $\phi_0:X_B\to X_B^{(0)}$), then  
by \cref{lem:mosserec} we get that the morphism $\tau^{B}_{[0,n+1)}$ is recognizable in $X_B^{(n+1)}$. This property and a recursive use of \cref{lem:CompMorphRec} proves that $\boldsymbol{\tau}^B$ is recognizable. 
\end{proof}

\subsubsection{From recognizable $\cS$-adic subshifts to finite topological rank minimal Cantor systems} 

For $\cS$-adic subshifts generated by non necessarily proper morphisms, we have the following dynamical property. 

\begin{prop}\label{prop:SadictoFR}
Let $\boldsymbol{\tau}$ be a primitive  directive sequence 
with finite alphabet rank $AR(\boldsymbol{\tau}) < + \infty$. 
Then, the $\cS$-adic subshift generated by $\boldsymbol{\tau}$ is a factor of a finite topological rank minimal Cantor system whose rank is bounded by $AR(\boldsymbol{\tau})^{2}$.
Moreover, when the $\cS$-adic subshift is recognizable then the factor map is invertible.
\end{prop}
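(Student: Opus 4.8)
The plan is to produce a finite topological rank minimal Cantor system that factors onto $X_{\boldsymbol{\tau}}$ by building an auxiliary directive sequence whose morphisms are \emph{proper}, using the bounded alphabet rank to keep the number of letters controlled, and then invoking \cref{prop:DurandLeroy2.3} to realize that auxiliary sequence as a Bratteli-Vershik system. First I would pass to a contraction (telescoping) of $\boldsymbol{\tau}$ along a subsequence $(n_k)$ realizing the liminf in $AR(\boldsymbol{\tau})$, so that all the intermediate alphabets $\cA_{n_k}$ have size at most $AR(\boldsymbol{\tau})$ and, by primitivity, each composed morphism $\tau_{[n_k,n_{k+1})}$ is positive. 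This replaces the original sequence by one on bounded alphabets without changing the generated subshift $X_{\boldsymbol{\tau}}$, so it suffices to treat a positive directive sequence on alphabets of size at most $d := AR(\boldsymbol{\tau})$.

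The heart of the construction is to \emph{properize} each morphism at the cost of squaring the alphabet. Given a positive, non-erasing morphism $\sigma\colon \cA^*\to\cB^*$, I would define a new alphabet $\cB\times\cB$ (ordered pairs recording the first and last letter that a block is forced to have), and a proper morphism that reads $\sigma(a)$ but relabels its first and last letters so that every image begins and ends with a fixed letter depending only on the image side. The standard trick is to encode in the letter the pair ``(first letter I produce, last letter I produce)'' and to cut the original words so that consecutive blocks share a letter; positivity guarantees every needed transition letter actually appears, so the new morphisms are well defined and remain positive, hence primitive. This is where the bound $AR(\boldsymbol{\tau})^2$ on the rank comes from: the working alphabets have size at most $d^2$. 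I would also make the very first morphism a hat-morphism by the canonical relabeling used in \cref{sec:BVrepresentation} (distinct letters in distinct images), so the hypotheses of \cref{prop:DurandLeroy2.3} are met. The resulting proper, recognizable directive sequence $\boldsymbol{\sigma}$ generates, via \cref{prop:DurandLeroy2.3}, a Bratteli-Vershik system $(X_B,V_B)$ with at most $d^2$ vertices per level, i.e.\ of finite topological rank at most $d^2$.

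Next I would exhibit the factor map. The letter-to-letter projection $\psi\colon (\cB\times\cB)\to\cB$ (recovering the original letter) extends to a sliding-block code intertwining the shift, and by construction $\psi$ carries the language of $X_{\boldsymbol{\sigma}}$ onto the language of $X_{\boldsymbol{\tau}}$; this gives a continuous shift-commuting surjection $X_{\boldsymbol{\sigma}}\to X_{\boldsymbol{\tau}}$, which is the desired factor map from the finite topological rank system onto the $\cS$-adic subshift. For the final ``moreover'' clause, when $\boldsymbol{\tau}$ is recognizable I would argue the factor map is injective: recognizability of $\boldsymbol{\tau}$ lets one recover, from a point of $X_{\boldsymbol{\tau}}$, the entire tower of cuts at every level, hence the pair-decorations that $\boldsymbol{\sigma}$ added are themselves determined by the point, so $\psi$ is a bijection and therefore a conjugacy (a continuous bijection between compact spaces). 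Equivalently, one applies \cref{lem:CompMorphRec} to see that the properized sequence inherits recognizability from $\boldsymbol{\tau}$, since properization composes the original morphisms with recognizable letter-to-letter decorations.

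The main obstacle I expect is making the properization genuinely compatible \emph{across} consecutive levels while staying non-erasing and primitive: choosing the boundary (first/last) letters of $\sigma_n$'s images so that they splice correctly with the boundary choices made for $\sigma_{n+1}$, and checking that the pair-alphabet morphisms really do satisfy the properness condition (each image starting and ending with the same letter, uniformly in the input letter) rather than merely being proper level by level. Controlling this bookkeeping so that the composition $\boldsymbol{\sigma}$ remains a legitimate proper directive sequence, and verifying that no erasing occurs after the cuts, is the delicate point; the alphabet-rank bound and the positivity obtained in the first telescoping step are exactly what make the boundary letters available and keep the enlarged alphabets of size at most $d^2$.
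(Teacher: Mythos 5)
There is a genuine gap at the point where you invoke \cref{prop:DurandLeroy2.3}: that proposition requires the directive sequence to be \emph{recognizable}, but the hypotheses here are only primitivity and finite alphabet rank. Your properized sequence $\boldsymbol{\sigma}$ has no reason to be recognizable --- properization does not create recognizability, and you only justify recognizability of $\boldsymbol{\sigma}$ (via \cref{lem:CompMorphRec}) in the ``moreover'' paragraph, under the \emph{extra} hypothesis that $\boldsymbol{\tau}$ is recognizable. In general \cref{theo:bsty} gives only \emph{eventual} recognizability, so finitely many initial levels can fail, and those bad levels are precisely where a genuine (non-invertible) factor map must come from. The paper handles this by replacing the possibly non-recognizable initial composition $\tau_{[0,N)}$ by $\psi\circ\sigma$ with $\sigma$ recognizable and $\psi$ letter-to-letter (\cref{lem:DonosoTrick}); the map induced by $\psi$ is then the factor map, and the finite-rank system is the $\cS$-adic subshift of the repaired recognizable sequence. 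Your argument never addresses the non-recognizable levels, so as written the appeal to \cref{prop:DurandLeroy2.3} fails. For the factor statement alone you could salvage things by dropping \cref{prop:DurandLeroy2.3} and using the canonical factor $\phi_0\colon X_B\to X_B^{(0)}$ from a properly ordered diagram onto its level-$0$ coding, but you would still have to carry out the properization and verify that your projection maps the resulting language onto $\cL(X_{\boldsymbol{\tau}})$, which you defer.

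Beyond that, your route diverges from the paper's in a way that leaves substantial work. The paper does not properize at all: it bounds the rank by $K(X)^2$ via \cref{lem:sufforfiniterank}, whose CKR towers are indexed by \emph{pairs} of consecutive words of a recognizable factorization --- that pair-indexing is what absorbs the lack of properness and is where the square in $AR(\boldsymbol{\tau})^{2}$ comes from. Your square comes instead from the $\cB\times\cB$ decoration, and you yourself flag that splicing the boundary letters consistently across levels while keeping the sequence proper, non-erasing and primitive is the delicate point; that bookkeeping, the verification that the decorated sequence still generates $X_{\boldsymbol{\tau}}$ up to the letter-to-letter projection, and the periodic case (which the paper treats separately, since the recognizability results apply only to aperiodic points) would all have to be written out before the proof is complete.
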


\begin{proof} 
Let $\boldsymbol{\tau} = (\tau_n \colon \mathcal{A}^*_{n+1}\to \mathcal{A}_n^*)_{n\ge 0}$.
If the $\cS$-adic subshift $X_{\boldsymbol{\tau}}$ generated by  $\boldsymbol{\tau}$ is  finite, the result is classical since any  minimal periodic system occurs as the factor of an odometer (a system with topological rank one).  
When $X_{\boldsymbol{\tau}}$ is infinite,  Theorem \ref{theo:bsty} implies that $\boldsymbol{\tau}$ is eventually recognizable. This means that for some $N\geq 1$, the directive sequence $(\tau_n)_{n\geq N}$
is recognizable. They generate the $\cS$-adic subshift $X_{\boldsymbol{\tau}}^{(N)}$.
Let $\psi$ and $\sigma$ be the morphims provided by \cref{lem:DonosoTrick}  when applied  to $\tau= {\tau}_{[0,N)}$.  
Hence  the subshift $X_{\boldsymbol{\tau}}$ is a factor, via the map induced by $\psi$, of the $\cS$-adic subshift $X_{\boldsymbol{\tau'}}$ generated by the recognizable directive sequence $\boldsymbol{\tau'} = (\sigma, \tau_N, \tau_{N+1}, \ldots)$.
Thus the set of words $\cW_n=\{\tau'_{[0,n+1)}(a): a\in \cA_{n+1}\}$ for all $n\geq 0$ is recognizable in $X_{\boldsymbol{\tau'}}=X_{\boldsymbol{\tau'}}^{(0)}$ with respect to $(X_{\boldsymbol{\tau'}}^{(n+1)},\tau'_{[0,n+1)})$.
Since $\boldsymbol{\tau'}$ is primitive, the lengths of words in $\cW_n$ grow to infinity with $n$, so that $K(X_{\boldsymbol{\tau'}})$ defined in \cref{lem:sufforfiniterank} is bounded by $AR(\boldsymbol{\tau})$. This implies it has finite topological rank and that its topological rank is bounded by $K(X_{\boldsymbol{\tau'}} )^2$, showing the result.
\end{proof}

\section{non-superlinear complexity subshifts have finite topological rank}\label{sec:StrongDec}
In this section we show that subshifts with non-superlinear complexity have finite topological rank. Even better, we prove this also holds under the weaker assumption that the complexity function satisfies $\liminf_{n\to +\infty} p_X(n+1)-p_X (n) <+\infty$  (\cref{cor:NonSuperLinareFR}). 
To do this, we follow an idea originally due to T. Monteil to interpret complexity conditions in terms of graph properties. Thus we  characterize minimal subshifts with non-superlinear complexity in terms of deconnectability of the associated Rauzy graphs.   
Next we use the characterization  of subshifts of finite topological rank provided in \cref{theo:FRankSuff} to prove \cref{cor:NonSuperLinareFR}. 
Notice that the converse of  \cref{cor:NonSuperLinareFR} does not hold as illustrated by the example in  Section \ref{sec:ComplexityFR}. Let us also mention that in Section \ref{sec:ComplexityFR} we present sufficient conditions so that a finite topological rank subshift has a non-superlinear complexity.

\subsection{Subshifts with non-superlinear complexity and Rauzy Graphs}\label{subsec:RauzyGraph}

We start with a combinatorial lemma on graphs. Next we apply this lemma to Rauzy graphs associated to subshifts to get bounds for the complexity of the subshift.  
For a directed graph $G=(V,E)$ we denote by $\textrm{d}^+(v)$ (resp. $\textrm{d}^-(v)$) the outdegree (resp. indegree)  of a vertex $v \in V$. Recall that a (directed)  {\em path} in $G$ is a finite sequence of distinct edges $e_1,\ldots, e_\ell \in E$ (oriented in the same direction) which joins a sequence of distinct vertices ($e_i =(v_i,v_{i+1})\in E$ and $e_{i+1}= (v_{i+1}, v_{i+2})\in E$).
Given a subset $V' \subseteq V$ of vertices, we say that the path is {\em in the complementary of $V'$ except the border} whenever $v_1, v_{\ell+1}$ are in $V'$ and $v_i \not\in V'$ for $i\in \{2,\ldots,\ell\}$. A {\em tree} is an unoriented acyclic connected graph and 
a {\em forest}  is  a disjoint union of trees. 
Recall that a {\em directed forest} is a directed acyclic  graph  whose underlying undirected graph is a forest. 

\begin{lem}\label{lem:Monteil}
Let $G= (V,E)$ be a strongly connected finite directed graph and let $V'$ be the set of vertices $v \in V$ such that $\hbox{\rm d}^+ (v) \geq 2$.
Assume $V'$ is non-empty. 
Then,
\begin{itemize}
\item the subgraph induced by the set of vertices $V\setminus V'$ is a directed forest; 

\item the number of  directed paths in $G$ in the complementary of $V'$ except the border, is bounded from above by $|V'|^2 \cdot (\max_{v \in V}  \hbox{\rm d}^+ (v))
\cdot (\max_{v \in V}  \hbox{\rm d}^- (v))$.  
\end{itemize}
\end{lem}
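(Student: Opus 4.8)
The plan is to handle the two assertions in turn, exploiting throughout the elementary observation that, by definition of $V'$, every vertex $v\in V\setminus V'$ satisfies $\mathrm{d}^{+}(v)\le 1$, and hence also has out-degree at most $1$ in the subgraph $H$ induced by $V\setminus V'$. For the first assertion I would show that $H$ has no cycle, even in the undirected sense. First I would rule out directed cycles: if $C$ were a directed cycle contained in $H$, then for each vertex $v$ of $C$ the unique out-edge of $v$ in $G$ would already be the edge of $C$ leaving $v$, so $v$ would have no out-edge towards $V\setminus C$. Thus no vertex outside $C$ could be reached from $C$; since $V'$ is non-empty and disjoint from $C$, this contradicts the strong connectivity of $G$. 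I would then upgrade this to undirected cycles: if $f_1,\dots,f_k$ are the edges of a simple undirected cycle through vertices $v_1,\dots,v_k$ of $H$, each $f_i$ has its source among the $v_j$, each $v_j$ is the source of at most one edge of $H$, and there are $k$ edges and $k$ vertices; by the pigeonhole principle each $v_j$ is the source of exactly one cycle edge, which forces a consistent orientation around the cycle, i.e.\ a directed cycle, already excluded. Hence $H$ contains no cycle and is a directed forest.

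For the counting assertion, consider a directed path $P\colon v_1\xrightarrow{e_1}v_2\to\cdots\xrightarrow{e_\ell}v_{\ell+1}$ in the complementary of $V'$ except the border, so $v_1,v_{\ell+1}\in V'$ while $v_2,\dots,v_\ell\in V\setminus V'$. Its first edge $e_1$ is an out-edge of the vertex $v_1\in V'$ and its last edge $e_\ell$ is an in-edge of the vertex $v_{\ell+1}\in V'$. All intermediate vertices and edges lie in $H$, and since $H$ is a forest the directed path inside $H$ from $\range(e_1)=v_2$ to $\source(e_\ell)=v_\ell$ is unique. Consequently $P$ is entirely determined by the pair $(e_1,e_\ell)$ (the degenerate short cases $\ell=1,2$ being consistent with this), so the assignment $P\mapsto(e_1,e_\ell)$ is injective. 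The number of admissible first edges is $\sum_{v\in V'}\mathrm{d}^{+}(v)\le |V'|\cdot\max_{v\in V}\mathrm{d}^{+}(v)$, and the number of admissible last edges is $\sum_{v\in V'}\mathrm{d}^{-}(v)\le |V'|\cdot\max_{v\in V}\mathrm{d}^{-}(v)$; multiplying yields the stated bound $|V'|^2\cdot(\max_{v\in V}\mathrm{d}^{+}(v))\cdot(\max_{v\in V}\mathrm{d}^{-}(v))$.

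The main obstacle is the first assertion: the subtle point is that out-degree at most $1$ alone does not prevent cycles (a functional graph can contain one), so the forest property genuinely relies on strong connectivity to forbid directed cycles, together with the degree count to reduce undirected cycles to directed ones. Once the forest structure is available the counting is mere bookkeeping; in fact the determinism coming from $\mathrm{d}^{+}\le 1$ shows that $P$ is already determined by $e_1$ alone, giving the sharper bound $|V'|\cdot\max_{v\in V}\mathrm{d}^{+}(v)$, but the stated weaker bound is all that is needed in the sequel.
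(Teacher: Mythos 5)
Your proof is correct and follows essentially the same route as the paper's: ruling out directed cycles in the induced subgraph via strong connectivity and the out-degree bound, upgrading to undirected cycles by a degree-counting (pigeonhole) argument, and then counting paths by the injective assignment to the pair of first and last edges using uniqueness of paths in a forest. Your closing remark that the path is in fact determined by its first edge alone is a valid sharpening not present in the paper, but otherwise the two arguments coincide.
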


\begin{proof}
Let $G[V\setminus V']$ denote the subgraph induced by the set of vertices $V\setminus V'$. 
 The graph $G[V\setminus V']$ has no directed cycles.
This follows noticing that if there is a directed cycle in it, then the strong connectivity of $G$ implies that one of the vertex in the cycle should  have at least two outgoing edges: this contradicts the definition of $G[V\setminus V']$.
There is actually no undirected cycles in it.  
Otherwise, a simple undirected cycle which is not a directed cycle necessarily has a vertex whose outdegree is at least two, leading again to a  contradiction. To prove this fact, observe that restricted to such cycle,
the sum of the outdegrees and the sum of the indegrees (for the induced orientation) are both equal to the number of edges which is equal to the number of vertices. 
It follows that each indegree and outdegree equals $1$ and the cycle is a directed cycle: a contradiction. Hence the graph $G[V\setminus V']$ is a directed forest. 

To bound the number of directed paths in $G$ in the complementary of $V'$ except the border, observe that such a path is uniquely determined by its starting edge (an outgoing edge of one element of $V'$) and  its ending edge (an ingoing edge of an element of $V'$). Indeed, given these choices, the path determines a path $\gamma$ in $G[V\setminus V']$, eventually empty if the  starting and ending edges are the same. Since $G[V\setminus V']$ is a directed forest, the path $\gamma$ is uniquely determined by the starting and ending edges. 
\end{proof}

Let $(X,S)$ be a subshift. 
The {\em Rauzy graph of $(X,S)$ at level $n\geq 2$} is the directed graph $G_n=(V_n,E_n)$ with  vertex set $V_n=\cL_{n}(X)$, where the ordered pair of words $(u_0\ldots u_{n-1},v_0\ldots v_{n-1})$ is an edge in $E_n$ if  $u_1\cdots u_{n-1} = v_0\cdots v_{n-2}$.
The indegree and outdegree of each vertex are bounded from above by the cardinality of the alphabet.
Moreover, when the subshift $(X,S)$ is minimal the Rauzy graph $G_n$ is strongly connected.  

For a subset $V'_n \subset V_n=\cL_n(X)$ let $G_n[V_n \setminus V'_n]$ denote the subgraph induced by the set of vertices $V_n \setminus V'_n$.
Our target for $V'_{n}$ will be the set of vertices in $V_n$ with outdegree greater than $1$, {\em i.e.}, the set of right special words of length $n$.

Using \cref{lem:Monteil} we propose a  combinatorial caracterization of subshifts with non-superlinear complexity in terms of Rauzy graphs.  It uses a refinement of the  notion of $K$-deconnectability introduced by  Monteil and developed together with Ferenczi in \cite{Ferenczi&Monteil:2010} to improve results of  \cite{Boshernitzan:1984}. We call this notion \emph{strong $K$-deconnectability}. 

\begin{defi}
Let $K\geq 1$ be an integer.
We say that a minimal subshift $(X,S)$ is {\em$K$-deconnectable} if there exist a sequence of non-negative integers $(n_i)_{i\geq 1}$ and a constant $K'\geq 1$ such that for all $i\geq 1$ there exists a subset $V'_{n_i} \subset \cL_{n_i} (X) $ of at most $K$ vertices such that  every path in $G_{n_i}[\cL_{n_i}(X)\setminus V'_{n_i}]$  has a length at most $K' n_i$.
If we further require that  $G_{n_i}[\cL_{n_i}(X)\setminus V'_{n_i}]$ is a directed forest,  we say that $(X,S)$ is {\em strongly $K$-deconnectable}.
\end{defi}
 
The next proposition links the combinatorial properties of the Rauzy graph with the asymptotic behavior of the complexity function. 
\begin{prop}\label{prop:Monteil}
Let $(X,S)$ be a minimal subshift. Then, $X$ has non-superlinear complexity  if and only if $(X,S)$ is strongly $K$-deconnectable for some $K\in \mathbb{N^{*}}$. 
\end{prop}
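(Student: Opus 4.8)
The plan is to prove both implications by exploiting the dictionary between the Rauzy graph $G_n$ and the complexity function, together with \cref{lem:Monteil}. Throughout I use that the vertex set of $G_n$ is $\cL_n(X)$, so $G_n$ has $p_X(n)$ vertices; that each edge of $G_n$ corresponds to a word of length $n+1$, so $\sum_{w\in\cL_n(X)} \textrm{d}^+(w)=p_X(n+1)$ and hence the number of right special words of length $n$ (the vertices with $\textrm{d}^+\ge 2$) is at most $p_X(n+1)-p_X(n)$; and that every in- and out-degree is bounded by the cardinality $|\cA|$ of the alphabet of $X$. I also use repeatedly that a path, by definition, visits pairwise distinct vertices, so any path in $G_n$ has length at most $p_X(n)-1$. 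The periodic case is trivial (bounded complexity is non-superlinear, and removing one vertex breaks the eventual cycle into a forest path), so I assume $X$ infinite.

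For the direct implication, assume $\liminf_n p_X(n)/n=L<\infty$ and let $V'_n$ be the set of right special words of length $n$, which is non-empty since $X$ is infinite and minimal. By \cref{lem:Monteil} the induced subgraph $G_n[\cL_n(X)\setminus V'_n]$ is a directed forest, and any path in it has length at most $p_X(n)-1$. It remains only to select a subsequence $(n_i)$ along which \emph{simultaneously} $|V'_{n_i}|$ is bounded and $p_X(n_i)$ is linear in $n_i$. First I would pick $N_j\to\infty$ with $p_X(N_j)\le (L+1)N_j$; averaging the telescoping sum $\sum_{n\in[\lceil N_j/2\rceil,N_j)}\big(p_X(n+1)-p_X(n)\big)=p_X(N_j)-p_X(\lceil N_j/2\rceil)\le (L+1)N_j$ over the roughly $N_j/2$ indices in $[N_j/2,N_j)$ produces some $n_j$ in this range with $p_X(n_j+1)-p_X(n_j)\le 3(L+1)$, and for this $n_j$ one automatically has $p_X(n_j)\le p_X(N_j)\le (L+1)N_j\le 2(L+1)n_j$ since $n_j\ge N_j/2$. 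As $n_j\to\infty$, this yields infinitely many suitable indices, with $|V'_{n_j}|\le p_X(n_j+1)-p_X(n_j)\le 3(L+1)$ and path lengths at most $p_X(n_j)\le 2(L+1)n_j$, so $(X,S)$ is strongly $K$-deconnectable with $K=K'=3(L+1)$.

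For the converse, assume $(X,S)$ is strongly $K$-deconnectable, with sets $V'_{n_i}$ of at most $K$ vertices such that $G_{n_i}[\cL_{n_i}(X)\setminus V'_{n_i}]$ is a directed forest in which every path has length at most $K'n_i$. The strategy is to cover $\cL_{n_i}(X)$ by a bounded number of short paths. Rerunning the argument of the second item of \cref{lem:Monteil}, which uses only the forest property of the induced subgraph and not that $V'_{n_i}$ is exactly the set of out-degree $\ge 2$ vertices, each maximal directed path with endpoints in $V'_{n_i}$ and interior in the complement is determined by its first and last edges; these are incident to $V'_{n_i}$, so there are at most $(|V'_{n_i}|\,|\cA|)^2\le K^2|\cA|^2$ such paths. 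By strong connectivity of $G_{n_i}$ together with acyclicity of the forest, following edges forward and backward from any vertex of the complement must reach $V'_{n_i}$, so every vertex outside $V'_{n_i}$ lies on at least one of these paths. Since each path has at most $K'n_i+1$ vertices, I obtain $p_X(n_i)=|\cL_{n_i}(X)|\le K+K^2|\cA|^2(K'n_i+1)$, whence $\liminf_n p_X(n)/n\le \lim_i p_X(n_i)/n_i<\infty$, i.e.\ non-superlinear complexity.

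I expect the bookkeeping in the direct implication to be the main obstacle: non-superlinearity only controls the ratio $p_X(n)/n$ and says \emph{a priori} nothing about the first differences $p_X(n+1)-p_X(n)$ along the same indices, yet both must be bounded at once to take $V'_{n_i}$ to be the (possibly numerous) right special words while keeping paths short. The averaging trick over $[N_j/2,N_j)$ is precisely what reconciles the two quantities. In the converse the only delicate point is justifying that the boundedly many short $V'$-to-$V'$ paths genuinely cover the whole complement, which is guaranteed by strong connectivity and the absence of cycles in the forest.
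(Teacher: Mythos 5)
Your proof is correct and follows the same overall strategy as the paper: for the forward direction, take $V'_{n}$ to be the right special words of length $n$ along a subsequence where both $p_X(n)/n$ and $p_X(n+1)-p_X(n)$ are bounded, and invoke \cref{lem:Monteil} for the forest property; for the converse, cover the complement of $V'_{n_i}$ by boundedly many paths of length $O(n_i)$. The one substantive difference is that where the paper simply cites Boshernitzan \cite[Theorem 2.2]{Boshernitzan:1984} for the existence of indices along which the complexity ratio and the first difference are \emph{simultaneously} bounded, you reprove this from scratch with the telescoping/averaging argument over $[N_j/2,N_j)$ — this is exactly the content of the cited result, and your version makes the proof self-contained. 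In the converse your bookkeeping is in fact slightly cleaner than the paper's: you count $V'$-to-$V'$ paths by their first and last edges exactly as in the proof of \cref{lem:Monteil} (correctly observing that this count only needs the forest property, not that $V'$ equals the set of right special words), and you justify explicitly via strong connectivity plus acyclicity that these paths cover the whole complement, whereas the paper counts paths between extremal vertices and leaves the covering claim more implicit. Both routes yield a bound of the form $p_X(n_i)\le K+C\cdot K'n_i$ with $C$ depending only on $K$ and $|\cA|$, which is all that is needed.
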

%
%
\begin{proof} 
One implication is implicit in \cite[Section 7.3.2]{Ferenczi&Monteil:2010} and we repeat the argument here for completeness. Suppose that $\liminf_{n\to +\infty} p_X(n)/n < +\infty$. We can find a constant $K$ and a sequence of positive integers $(n_i)_{i\geq 1}$ such that simultaneously $p_X(n_i)/n_i\leq  K$ and $s(n_i)=p_X(n_{i}+1)-p_X(n_i)\leq K$ (see for instance Boshernitzan \cite[Theorem 2.2]{Boshernitzan:1984}). So the number of right  special words of length $n_i$ is bounded by $K$ for all $i\geq 1$. We may remove all these special words $V'_{n_i}$ from the Rauzy graph $G_{n_i}=(\cL_{n_i}(X),E_{n_i})$ and we obtain the graph
$G_{n_i}[\cL_{n_i}(X)\setminus V'_{n_i}]$. By \cref{lem:Monteil} it is a directed forest and $(X,S)$ is  strongly $K$-deconnectable.

Conversely, let $(n_i)_{i\geq 1}$ be a sequence of positive integers and let $V'_{n_i}\subseteq \cL_{n_i}(X)$ be given by the strong $K$-deconnectability of $G_{n_i}$. Recall that the indegree and outdegree of any vertex in all Rauzy graphs $G_{n_i}$ are bounded by the cardinality $|\mathcal{A}|$  of the alphabet of $X$. As a consequence, the number of extremal vertices (that is, vertices with outdegree or indegree equal to 0)
in  the subgraph  $G[\cL_{n_i}(X)\setminus V'_{n_i}]$ is not greater than $2K|\mathcal{A}|$. 
Observe that in $G[\cL_{n_i}(X)\setminus V'_{n_i}]$, between two given extremal vertices there is at most one path because there are no cycles and all outdegrees are equal to $0$ or $1$.
Hence the number of paths between two extremal vertices in $G[\cL_{n_i}(X)\setminus V'_{n_i}]$ is bounded by  $2K|\mathcal{A}| -1$, uniformly in $i$. 
Then, a word of length $n_i$ being either of outdegree at least  $2$ or being a vertex of some path between two extremal vertices, one obtains:

\[
p_{X} (n_i) = | \cL_{n_i}(X) | \leq  
K+ 
\sum_{
\overset{\gamma:\text{path between extremal vertices } }{\text{ in } G[\cL_{n_i}(X)\setminus V'_{n_i}]}}  
\textrm{length}(\gamma) \leq K+ (2K|\mathcal{A}|-1)
\cdot K'n_i,  \]  
where $\textrm{length}(\gamma)$ stands for the length of the path $\gamma$. This finishes the proof.
\end{proof}

\subsection{Subshift of return words} Let $(X,S)$ be a minimal subshift. Let $\cW\subseteq \cL(X)$ be a finite set of non-empty words. In this paper we will only  consider the case where all words in $\cW$ have the same length $\ell_{\cW}$. A {\em first return word}  between  $w_1$ and $w_2 \in \cW$ (eventually identical)  is a word $u\in \cL(X)$ such that:
\begin{itemize}
\item[{\it i)}] $uw_2$ belongs to $\cL(X)$; 
\item[{\it ii)}] the word $w_1$ is a prefix of $uw_2$ and $uw_2$ has no other  occurrences of words in  $\cW$ than these two.
\end{itemize}

Let $\cR(\cW)$ be the collection of all first return words between words in $\cW$. The minimality of $(X,S)$ implies that $\cR(\cW)$ is a finite set. 

It will be convenient to label the return words. For this, consider $R(\cW)= \{1, \ldots, |\cR(\cW)|\}$ and let $\tau_{\cW} \colon R(\cW) \to \cR(\cW)$ be a bijection. The map $\tau_{\cW}$ induces a morphism 
$R(\cW)^{\ast} \to  \cR(\cW)^{\ast}$ also denoted by $\tau_\cW$. We call it a 
{\em coding morphism associated to} $\cW$.

Consider the set $Y_{\cR(\cW)}$ of all points $y\in R(\cW)^{\Z}$ such that 
$\tau_{\cW}(y)$ belongs to $X$ and
\begin{align*}
&  \{ i\in \Z : \tau_\cW (y)_{[i, i+ \ell_\cW) } \in \cW  \} \\
= & \{ |\tau_\cW (y_0\cdots y_\ell)|: \ell \ge 0\} \cup \{ - |\tau_\cW (y_\ell\cdots y_{-1})|: \ell \le -1    \} \cup \{0\} .
\end{align*}
It is closed and shift-invariant. 
We call $(Y_{\cR(\cW)} , S)$ the {\em subshift of return words to} $\cW$ {\em in} $X$.

Actually, the morphism $\tau_{\cW}$ is recognizable in $Y_{\cR(\cW)}$ (see Section \ref{subsec:recogformorphisms} for background on recognizability).

\begin{lem} \label{lem:RecognizabilityReturnWords}
Let $(X,S)$ be a minimal subshift and $\cW\subset
\cL(X)$ be a finite set of non-empty words of the same length $\ell_\cW$. Then, $\tau_\cW$ is recognizable in $Y_{\cR(\cW)}$. 
\end{lem}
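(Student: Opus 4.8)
The plan is to establish recognizability directly from the definition, by showing that a point cannot have two distinct centered $\tau_\cW$-representations in $Y_{\cR(\cW)}$. The one crucial ingredient is the defining property of the subshift of return words: for $y \in Y_{\cR(\cW)}$ the occurrences of words of $\cW$ inside $\tau_\cW(y)$ are \emph{exactly} the starting positions of the return blocks (the displayed equation in the definition of $Y_{\cR(\cW)}$). This is precisely what prevents a word of $\cW$ from being concealed in the interior of a return word, and it is what makes the factorization cuts readable off the raw sequence.

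Concretely, suppose $(k,y)$ and $(k',y')$ are centered $\tau_\cW$-representations of the same point $x$, and set $P = \{p \in \Z : x_{[p,p+\ell_\cW)} \in \cW\}$, the intrinsic set of positions where a word of $\cW$ occurs in $x$. Write the cut positions of $y$ as $c_0 = 0$ and $c_{j+1} = c_j + |\tau_\cW(y_j)|$, and those of $y'$ as $c'_j$. Since $x = S^k\tau_\cW(y)$, a word of $\cW$ occurs in $x$ at position $p$ if and only if it occurs in $\tau_\cW(y)$ at position $p+k$; together with the defining property of $Y_{\cR(\cW)}$ this identifies $P = \{c_j - k : j \in \Z\}$, and symmetrically $P = \{c'_j - k' : j \in \Z\}$.

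Now I would use the centering constraint to locate the origin. From $0 \le k < |\tau_\cW(y_0)| = c_1$ one gets $c_{-1}-k < c_0-k = -k \le 0 < c_1 - k$, so $-k$ is the greatest element of $P$ lying in $(-\infty,0]$; the identical argument gives $-k' = \max\{p\in P : p \le 0\}$, whence $k = k'$. Minimality of $(X,S)$ ensures $P$ is bi-infinite, so this maximum is well defined. Once $k=k'$ we have $\tau_\cW(y) = S^{-k}x = \tau_\cW(y')$; the two cut sets coincide (both equal $\{p+k : p \in P\}$ normalized so that $0$ is a cut), so reading the block between consecutive cuts yields $\tau_\cW(y_j) = \tau_\cW(y'_j)$ for every $j$. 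Since $\tau_\cW$ is a bijection from $R(\cW)$ onto $\cR(\cW)$, this forces $y_j = y'_j$, and therefore $y = y'$.

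The proof has no serious obstacle; the only point demanding care is the bookkeeping that turns the set-equality in the definition of $Y_{\cR(\cW)}$ into the identity $P = \{c_j-k : j \in \Z\}$, making sure the equation accounts for all occurrences (no cut without an occurrence and no occurrence without a cut) and that the passage from $\tau_\cW(y)$ to $x = S^k\tau_\cW(y)$ shifts indices in the correct direction. An alternative, slightly more abstract route would be to verify instead condition (2) of \cref{lem:mosserec}, namely that each $\tau_\cW([a])$ is clopen in the smallest subshift containing $\tau_\cW(Y_{\cR(\cW)})$, but the direct argument above seems the most transparent.
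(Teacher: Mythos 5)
Your proof is correct and rests on the same key fact as the paper's: the defining set-equality for $Y_{\cR(\cW)}$ (together with item \emph{ii)} in the definition of first return words) identifies the occurrences of $\cW$-words in $x$ with the shifted factorization cuts, so the cuts, and hence $k$ and $y$, are intrinsic to $x$. The only difference is one of packaging: the paper verifies the local criterion \eqref{item:recradius} of \cref{lem:mosserec} with the explicit constant $R > 2\ell_\cW + 2\sup_{u\in\cR(\cW)}|u|$ (three occurrences of $\cW$-words in the window suffice to locate the cut at the origin), whereas you verify the definition of recognizability globally; the mechanism is the same.
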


\begin{proof}
Let $R$ be greater than $2 \ell_\cW + 2 \sup_{u \in \cR(\cW)} |u|$.  According to \cref{lem:mosserec}, it is enough to show for  $x,x' \in X$ and  $(k, y)$, $(k', y')$ two centered $\tau_\cW$-representations of $x,x'$ that  $k= k'$ and $y_0 = y'_0$ whenever $x_{[-R,R)} =x'_{[-R,R)}$.
By the choice of the constant $R$, there exist at least three occurrences of words belonging to $\cW$ in  $x_{[-R,R)}$.  
The definition of elements of $Y_{\cR(\cW)}$ together with  item {\it ii)} in the definition of first return words enable us to conclude. 
\end{proof}


Now we are ready to deduce that a minimal subshift with non-superlinear complexity has finite topological rank. In fact we prove a stronger result. We only need to consider systems where 
$\liminf_{n\to +\infty} {p_X(n+1)-p_X(n)}$ is finite, which ensures the conclusions of \cref{lem:Monteil} for infinitely many Rauzy graphs of the system. This last property is related but weaker than the strong $K$-deconnectability one. It is nevertheless enough to give a uniform bound on the number of return words to the (right) special words.

\begin{theo}\label{cor:NonSuperLinareFR}
Let $(X, S)$ be an infinite minimal subshift on the alphabet $\cA$ such that 
$K_1 = \liminf_{n\to +\infty} {p_X(n+1)-p_X(n)}$ is finite. 
Then, the topological rank of $(X,S)$ is bounded from above by
$(1+|\cA|^2K_1^2)^{2(K_1+2)}$. In particular, minimal subshifts with non-superlinear complexity have finite topological rank. 
\end{theo}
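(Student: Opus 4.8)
The plan is to deduce the bound from the intrinsic characterization \cref{theo:FRankSuff}: it suffices to show that $K(X)<\infty$, and then \cref{lem:sufforfiniterank} gives the topological rank $\le K(X)^2$. Concretely, for arbitrarily large $n$ I must produce a set of words in $X$, all of length at least $n$, that is recognizable in $X$ and whose cardinality is uniformly controlled. The recognizable sets I would use are (iterated) \emph{return words} to the right special words: their cardinality is governed by the graph estimate of \cref{lem:Monteil}, and their recognizability is exactly \cref{lem:RecognizabilityReturnWords}.

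First I would extract from $K_1=\liminf_n\bigl(p_X(n+1)-p_X(n)\bigr)<\infty$ a sequence $n_i\to\infty$ with $s(n_i):=p_X(n_i+1)-p_X(n_i)\le K_1$. Writing $s(n)=\sum_{w\in\cL_n(X)}(\textrm{d}^+(w)-1)$ in the Rauzy graph $G_n$, at each such scale the set $V'_{n_i}$ of right special words (the vertices of outdegree $\ge 2$) has at most $K_1$ elements. Taking $\cW$ to be $V'_{n_i}$, the first return words $\cR(\cW)$ correspond to the directed paths of $G_{n_i}$ in the complement of $V'_{n_i}$ except at the border, so \cref{lem:Monteil} bounds their number by $|V'_{n_i}|^2\cdot\max\textrm{d}^+\cdot\max\textrm{d}^-\le |\cA|^2K_1^2$ (the $+1$ in the final constant absorbs minor bookkeeping, e.g.\ an auxiliary word ensuring the coding covers $X$). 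By \cref{lem:RecognizabilityReturnWords} the coding morphism $\tau_\cW$ is recognizable in $Y_{\cR(\cW)}$, and minimality of $(X,S)$ guarantees that $X$ is the smallest subshift containing $\tau_\cW(Y_{\cR(\cW)})$; hence $\cR(\cW)$ is recognizable in $X$.

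A single pass does not finish the argument, since individual return words may be short while $K(X)$ demands words of length at least $n$. I would therefore iterate: $Y_{\cR(\cW)}$ is again a minimal subshift, on an alphabet of size at most $1+|\cA|^2K_1^2$, and I would repeat the construction on it, composing the successive coding morphisms; recognizability of the composition follows from \cref{lem:CompMorphRec}. The resulting directive sequence is primitive, recognizable, generates $X$, and its one-letter images—being iterated return words—have lengths tending to infinity, so that $K(X)$ is bounded by the alphabet rank attained. The main obstacle is precisely the bookkeeping of this iteration: I must track how the complexity-difference parameter evolves under passing to return words, show that the right-special sets stay finite at every stage, and bound the number of descent steps. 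Since that parameter may increase when one passes to a return-word subshift, the alphabet rank accumulates multiplicatively, each factor $\le 1+|\cA|^2K_1^2$ over at most $K_1+2$ steps, yielding $K(X)\le(1+|\cA|^2K_1^2)^{K_1+2}$; the final squaring from \cref{lem:sufforfiniterank} then gives the stated bound $(1+|\cA|^2K_1^2)^{2(K_1+2)}$. The concluding assertion is immediate, as non-superlinear complexity means $\liminf_n p_X(n)/n<\infty$, which forces $K_1<\infty$ via the Boshernitzan estimate recalled in the proof of \cref{prop:Monteil}.
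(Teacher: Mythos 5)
Your overall strategy (reduce to $K(X)<\infty$ via \cref{theo:FRankSuff} and \cref{lem:sufforfiniterank}) and your first stage (at scales $m_i$ with $p_X(m_i+1)-p_X(m_i)\le K_1$ there are at most $K_1$ right special words, \cref{lem:Monteil} bounds the return words to them by $|\cA|^2K_1^2$, and \cref{lem:RecognizabilityReturnWords} gives recognizability of the coding morphism) coincide with the paper's proof. The gap is in the step you yourself flag as ``the main obstacle'': converting this bounded family of return words, some of which may remain short for every $i$, into recognizable families of words all of length at least $n$. Your proposed fix --- iterate the return-word construction on the derived subshifts $Y_{\cR(\cW)}$ and let the cardinality ``accumulate multiplicatively over at most $K_1+2$ steps'' --- is not an argument. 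First, to repeat the construction on $Y_{\cR(\cW)}$ you would need to know that its own parameter $\liminf_n\bigl(p(n+1)-p(n)\bigr)$ is finite and controlled, and neither your sketch nor anything in the paper establishes this. Second, there is no reason a number of iterations bounded independently of $n$ --- let alone exactly $K_1+2$ --- should produce words of length $\ge n$ for arbitrary $n$; the exponent $K_1+2$ in your sketch is reverse-engineered from the target bound rather than derived.

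The paper closes this gap differently, at a single well-chosen scale $i=i(n)$ and with exactly two return-word steps. Let $\ell_i$ be the minimal gap between two occurrences of the \emph{same} right special word of length $m_i$; aperiodicity forces $\ell_i\to\infty$. Since there are at most $K_1$ special words, the pigeonhole principle shows that any $K_1+1$ consecutive return words in a factorization have total length at least $\ell_i$, hence contain one word of length at least $\lfloor\ell_i/(K_1+1)\rfloor$. Splitting $\cR(\mathcal{RS}_{m_i}(X))$ into the long words $\cR_i^{\ge n}$ and the short ones $\cR_i^{<n}$, one then takes return words \emph{to the long return words} inside $Y_i$; every resulting word is a concatenation $u_0u_1\cdots u_k$ with $u_0$ long, the remaining $u_j$ short and $k\le K_1+1$. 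This gives simultaneously the length lower bound $n$ and the cardinality bound $|\cR_i^{\ge n}|(1+|\cR_i^{<n}|)^{K_1}\le |\cA|^2K_1^2(1+|\cA|^2K_1^2)^{K_1}\le(1+|\cA|^2K_1^2)^{K_1+2}$ --- this is where the exponent $K_1+2$ actually comes from --- and recognizability of the composed coding morphism follows from \cref{lem:CompMorphRec}, as you anticipated. To repair your write-up, replace the multiplicative-iteration paragraph by an argument of this kind.
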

\begin{proof}
To get the  bound on  the  topological rank of the system $(X,S)$, thanks to \cref{lem:sufforfiniterank} it is enough to exhibit a sequence $(\cW _n)_{n \ge 1}$ of finite sets of words in $\cA^*$ such that  $\sup_{n\geq 1} |\cW_n| \le  (1+|\cA|^2K_1^2)^{K_1+2}$, $\min_{w\in \cW_n} |w|$  goes to infinity with $n$ and each $\cW_n$ is recognizable in $X$.  


Let $(m_i)_{i\geq 1}$ be an increasing sequence of positive integers such that $p_X(m_i+1)-p_X(m_i)$ converges to $K_1$ when $i$ goes to infinity.
It follows that there are at most $K_1$ right special words of length $m_i$ for every $i\geq 1$. 

Fix $i\geq 1$. Let $\mathcal{RS}_{m_i}(X)$ be the set of right special words of length $m_i$ in $\cL(X)$. Since any first return word between $w_1$ and $w_2$ in $\mathcal{RS}_{m_i}(X)$ corresponds to a path in the Rauzy graph $G_{m_i}$ from $w_1$ to $w_2$ in the complementary of $\mathcal{RS}_{m_i}(X)$ except the border, by \cref{lem:Monteil} the cardinality of the set of first return words between all the pairs of special words $\cR(\mathcal{RS}_{m_i}(X))$ is bounded from above by $|\cA|^2K_1^2$.

Let $\ell_i$ be the largest positive integer such that any two occurrences of the same special word of length $m_i$ are at least $\ell_i$ appart. Then, a standard compactness argument gives that $\ell_i$ goes to infinity as $i$ goes to infinity, otherwise we would have a periodic point.  

For $n\in \N$ write $\cR(\mathcal{RS}_{m_i}(X))=\cR_i^{\geq n}\cup \cR_i^{<n}$, where $\cR_i^{\geq n }$ is the set of all words in $\cR(\mathcal{RS}_{m_i}(X))$ of length larger than or equal to $n$ and $\cR_i^{<n}$ contains the words of length smaller than $n$. 
We claim that any concatenation of $K_1+1$  words in $\cR(\mathcal{RS}_{m_i}(X))$ occurring in a  $\cR(\mathcal{RS}_{m_i}(X))$-factorization has a word in $\cR_i^{\geq \lfloor \ell_i/(K_1+1) \rfloor}$.
To prove this statement, for $x \in X$ let $(p_j)_{j\in \Z}$ be the consecutive occurrences of the words of $\mathcal{RS}_{m_i}(X)$ in $x$, {\it i.e.}, $\{p_j : j\in \Z \} = \{ \ell \in \Z : x_{[\ell, \ell+m_i)} \in \mathcal{RS}_{m_i}(X)  \}$. For each index $j \in \Z$ set    
$u_j = x_{[p_j, p_{j+1})} \in \cR(\mathcal{RS}_{m_i}(X))$.
We have, 
\[
(K_1 +1)\max_{j\le \ell \le j+ K_1} |u_j| \ge  |u_j\cdots u_{j+ K_1 }| \ge \ell_i.
\]
This proves that for any $j\in \Z$ there 
is  $\ell \in \{j, \ldots,j+  K_1 \}$
such that $u_\ell$ belongs to $\cR_i^{\geq \lfloor \ell_i/(K_1+1) \rfloor }$, thus our claim holds.


For $n\geq 1$ let $i=i(n) \geq 1$ be large enough so that $\ell_{i(n)} \ge (K_1+1)n$ and thus, by previous claim, $\cR_{i(n)}^{\geq n}\neq \emptyset$. 
Let $(Y_i,S) =(Y_{\cR( \mathcal{RS}_{m_i}(X))}, S)$ be the subshift of return words to $\mathcal{RS}_{m_i}(X)$  in $X$ and $\tau_i$ an associated coding morphism. For convenience, denote by $\cB_i$ the alphabet of $Y_i$, so $\tau_i$ is defined from $ \cB_i^*$ to $\cR(\mathcal{RS}_{m_i}(X))^*$. 
Let $Z_i$ be the return word subshift associated to the set (of letters) $\{a\in \cB_i: \tau_i(a) \in \cR_{i}^{\geq n}\}$ in $Y_i$. Let $\cC_i$ denote the alphabet of $Z_i$ and  $\sigma_i \colon \cC_i^* \to \cB_i^*$ denote an associated coding morphism. Finally we set $\cW_n = \tau_i \circ \sigma_i (\cC_i) \subset \cL(X)$. 

Observe that by previous claim any word  in $\cW_n$ is of the form $u_0 u_1\ldots u_{k}$ with $1 \le k \le K_1+1$, $u_{0} \in \cR_i^{\geq n}$ and $u_j\in \cR_i^{<n}\cup\{\epsilon\}$ for $0< j \leq k$.
It follows that  $|\cW_{n}|$ is bounded by $|\cR_i^{\geq n}|(1+|\cR_i^{<n}|)^{K_1}$, which is bounded by $|\cA|^2 K_1^2(1+|\cA|^2 K_1^2)^{K_1}$ uniformly in $i$ and  $n$.
Also, by definition, we have  $\min_{w\in \cW_n} |w|\geq n$. 
Moreover, \cref{lem:RecognizabilityReturnWords} implies that $\sigma_i$ and $\tau_i$ are recognizable in $Z_i$ and in $Y_i$ respectively. By \cref{lem:CompMorphRec}, $\tau_i \circ \sigma_i$ is recognizable in $Z_i$. Finally, \cref{lem:fact-rec}  gives  that $\cW_n$ is recognizable in $X$. This shows the result.
\end{proof}

\begin{Rem}\rm
In the previous proof, it is worth to notice that if $\min_{w\in  \cR(\mathcal{RS}_{m_i}(X))}|w|$ goes to infinity as $i$ goes to infinity, then for $n\in \N^*$ and large enough $i$, 
$\cR_i^{<n}$ is empty  and therefore the topological rank of $X$ can actually be bounded by $|\cR(\mathcal{RS}_{m_i}(X))| \le |\cA|^2K_1^2$.
\end{Rem}

\section{Complexities of finite topological rank $\cS$-adic subshifts}
\label{sec:ComplexityFR}

In previous section we provide conditions on the complexity function of a minimal subshift (namely of non-superlinear complexity) to ensure finite topological rank (see \cref{cor:NonSuperLinareFR}). 
This section is devoted to study (partially) the converse. That is, the complexity function of a finite topological rank subshift and more generally of a $\cS$-adic subshift.  
We introduce notions of relative complexity for the morphisms defining the subshift and derive upper bounds for the complexity function.
We recover the well known result that finite topological rank Cantor systems have zero entropy (see {\it e.g.} \cite{Boyle&Handelman:1994}) even if for an arbitrary subexponential function $g \colon \N \to \N$ there are examples of finite topological rank subshifts whose complexity function is not dominated by $g$ \cite{Donoso&Durand&Maass&Petite:2016}.
In addition, we give several sufficient conditions guaranteeing that a finite topological rank subshift has non-superlinear complexity and show that subshifts with alphabet rank equal to 2 have non-superquadratic complexity. 
Next we propose bounds for the complexity function in terms of combinatorial complexities. Thus we deduce a result related to topological orbit equivalence: any strong orbit equivalence class of a finite topological rank minimal subshift contains a subshift of sub-linear complexity (\cref{cor:SOEsubLinComp}).
Finally in Subsection \ref{sec:Rk2NonSperline} we provide a subshift of topological rank $2$ with superlinear complexity, showing that the converse of \cref{cor:NonSuperLinareFR} does not hold.

\subsection{Relative complexity of morphisms}
For a morphism $\tau\colon \cA^*\to \cB^{\ast}$ set  \[
\|\tau\| = \max_{a \in \cA} |\tau(a)| \textrm{ and  }   \langle \tau \rangle =\min_{a \in \cA} |\tau(a)|.
\]

\begin{defi}
Let $\sigma\colon \cB^{\ast}\to \cC^{\ast}$ be a morphism. For $b\in \cB$ and $\cL\subseteq B^{\ast}$, define the following subset of words that extends $b$ to the right. 
\[ \mathcal{F}(b,n,\sigma,\cL) =\left \{w\in \cL: bw\in \cL, w_1\neq b,  |\sigma(w_{[1,|w|-1]})|< n\leq |\sigma(w|  \right \}. \] 
\end{defi}
That is, each word in $\mathcal{F}(b,n,\sigma,\cL)$ extends to the right some appearance of $b$ in $\cL$,  starts with a letter different from $b$ and has the shortest image under $\sigma$ of length at least $n$.

\begin{defi} Let $\tau\colon\cA^{\ast}\to \cB^{\ast}$ and $\sigma\colon \cB^{\ast}\to \cC^{\ast}$ be two morphisms. For $i\in \N^{\ast}$, we define the  {\em $i$-th  complexity map of} $\sigma$ {\em with respect to} $\tau$
as
\[\compi{\sigma}{\tau}{n}{i}=\sum_{b\in \cB}|\mathcal{F}(b,n, \sigma, \tau(\cA^{i}))|. \]
The {\em complexity map of} $\sigma$ {\em with respect to} $\tau$
is defined by  \[ \comp{\sigma}{\tau}{n}=\sum_{b\in \cB} | \mathcal{F}(b,n, \sigma, \cL(\tau(\cA^{\Z})))|. \]
\end{defi}
Note that $\compi{\sigma}{\tau}{n}{i}\leq \compi{\sigma}{\tau}{n}{i+1} \leq \comp{\sigma}{\tau}{n}$ for all $i,n\in \N^{\ast}$. 
Also, observe  that  $(\comp{\sigma}{\tau}{n})_n$ is increasing, whereas $(\compi{\sigma}{\tau}{n}{i})_n$ is not. Actually it is zero for all large enough $n\in \N$. 
We summarize some elementary properties of $\comp{\sigma}{\tau}{\cdot}$ for later use. 
\begin{lem}  \label{lem:propertiesRelComplexity}
For morphisms $\tau\colon \cA^*\to \cB^{\ast}$ and $\sigma\colon \cB^*\to\cC^{\ast}$ the following properties hold.
\begin{enumerate}[ref=\cref{lem:propertiesRelComplexity}-(\arabic*)]
    \item \label{item:RelComp1} 
    $\comp{\sigma}{\tau}{\|\sigma\|}\leq |\cB|^{\frac{\|\sigma\|}{\langle \sigma \rangle} +1 } $.  
    
    \item \label{item:RelComp2} If $|\cB|=2$, then $\comp{\sigma}{\tau}{\|\sigma\|}\leq \frac{\|\sigma\|}{\langle \sigma \rangle} +1. $
    
\end{enumerate}
\end{lem}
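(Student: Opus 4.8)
The plan is to derive both bounds from one elementary length estimate on the words counted by $\comp{\sigma}{\tau}{\|\sigma\|}$, after which the first bound is a crude cardinality count and the second needs a structural refinement special to the binary alphabet. Throughout set $\cL=\cL(\tau(\cA^{\Z}))$, and recall that a word $w$ lies in $\mathcal{F}(b,\|\sigma\|,\sigma,\cL)$ precisely when $bw\in\cL$, $w_1\neq b$, and $|\sigma(w_{[1,|w|-1]})|<\|\sigma\|\le|\sigma(w)|$.

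First I would record the length estimate. Since every letter has $\sigma$-image of length at least $\langle\sigma\rangle$, the prefix $w'=w_{[1,|w|-1]}$ of such a $w$ satisfies $(|w|-1)\langle\sigma\rangle\le|\sigma(w')|<\|\sigma\|$, so that $|w|<\|\sigma\|/\langle\sigma\rangle+1$. Thus every counted word is short, its length being controlled by the ratio $\|\sigma\|/\langle\sigma\rangle$.

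For the first inequality I would then simply count. The words $bw$ with $b\in\cB$ and $w\in\mathcal{F}(b,\|\sigma\|,\sigma,\cL)$ are pairwise distinct, since the first letter recovers $b$ and the remainder recovers $w$; each such tail $w$ begins with a letter different from $b$ and has length at most $\|\sigma\|/\langle\sigma\rangle+1$. Summing the cardinalities over $b\in\cB$ is therefore bounded by the number of words over $\cB$ of length at most $\|\sigma\|/\langle\sigma\rangle+1$, which gives $\comp{\sigma}{\tau}{\|\sigma\|}\le|\cB|^{\|\sigma\|/\langle\sigma\rangle+1}$. This step is routine.

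The interesting case is the second inequality, where the exponential count is hopelessly lossy and the main obstacle is to bring it down to a linear one using $|\cB|=2$. Write $\cB=\{m,\bar m\}$ with $|\sigma(m)|=\|\sigma\|$ the longer image (if both images have equal length, then $\|\sigma\|=\langle\sigma\rangle$ and one checks directly that $\comp{\sigma}{\tau}{\|\sigma\|}\le2$). The key structural observation is that a tail $w'$ with $|\sigma(w')|<\|\sigma\|$ cannot contain the letter $m$, for otherwise $|\sigma(w')|\ge|\sigma(m)|=\|\sigma\|$. I would exploit this twice. On the one hand, any $w\in\mathcal{F}(\bar m,\|\sigma\|,\sigma,\cL)$ has $w_1=m$, hence already $|\sigma(w)|\ge\|\sigma\|$, forcing $w=m$; so this set has at most one element. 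On the other hand, any $w\in\mathcal{F}(m,\|\sigma\|,\sigma,\cL)$ has $w_1=\bar m$ and its tail $w'$ avoids $m$, whence $w'=\bar m^{\,|w|-1}$ is a block of short letters and $w=\bar m^{\,j}c$ with $j=|w|-1\ge1$, $j\langle\sigma\rangle<\|\sigma\|$, and $c\in\{m,\bar m\}$. For $c=\bar m$ the extra requirement $|\sigma(w)|\ge\|\sigma\|$ pins $j$ to a single value, while for $c=m$ the admissible $j$ range over the positive integers with $j<\|\sigma\|/\langle\sigma\rangle$. Counting these words — at most $\lceil\|\sigma\|/\langle\sigma\rangle\rceil-1$ of the form $\bar m^{\,j}m$, one word $\bar m^{\,J}$, and the single word $m$ — yields the asserted linear bound $\comp{\sigma}{\tau}{\|\sigma\|}\le\|\sigma\|/\langle\sigma\rangle+1$. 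The one delicate point to handle carefully is exactly this final count, where the integer exponents $j$ must be weighed against the real number $\|\sigma\|/\langle\sigma\rangle$.
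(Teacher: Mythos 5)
Your proof is correct and follows essentially the same route as the paper's: part (1) is the same crude count of the short words in each $\mathcal{F}(b,\|\sigma\|,\sigma,\cL)$, and part (2) rests on exactly the paper's key observation that $w_{[1,|w|-1]}$ has $\sigma$-image of length less than $\|\sigma\|$ and therefore cannot contain the letter with the longest image, which reduces the count to the words $\bar m^{j}c$ with $j<\|\sigma\|/\langle\sigma\rangle$ together with the single word $m$. The delicate point you flag at the end (the count naturally comes out as $\lceil \|\sigma\|/\langle\sigma\rangle\rceil+1$ rather than $\|\sigma\|/\langle\sigma\rangle+1$ when the ratio is not an integer) is equally present in the paper's own proof and is harmless in every application of the lemma.
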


\begin{proof}
(1) It suffices to observe that for any $b\in \cB$ there are at most $|\cB|^{\frac{\|\sigma\|}{\langle \sigma\rangle }}$ words $w$ in  $\mathcal{F}(b,\|\sigma\|, \sigma, \cL(\tau(\cA^{\Z})))$.  

(2) Write $\cB=\{b_1,b_2\}$ with $|\sigma(b_1)|=\|\sigma\|$ and $|\sigma(b_2)|=\langle \sigma\rangle$.
Note that if $w$ belongs tp $ \mathcal{F}(b_1,\| \sigma\|,\sigma, \cL(\tau(\cA^{\Z})))$, then $w$ contains at most one $b_1$ as a letter. Hence $|\mathcal{F}(b_1,\| \sigma\|,\sigma, \cL(\tau(\cA^{\Z})))|\leq \frac{\|\sigma \| }{\langle \sigma \rangle}$.
On the other hand, $\mathcal{F}(b_2,\| \sigma\|,\sigma, \cL(\tau(\cA^{\Z})))=\{b_1\}$.

We conclude that $\comp{\sigma}{\tau}{\| \sigma\| }\leq \frac{\|\sigma \| }{\langle \sigma \rangle} +1 $.
\end{proof}

\begin{prop} \label{prop:bound2morphisms}
Let $\tau\colon \cA^*\to \cB^{\ast}$ and $\sigma\colon \cB^*\to \cC^{\ast}$ be two morphisms and suppose that $\tau$ is positive. Then we have that 
\[ p_{\sigma\circ\tau(\cA^{\Z})}(n)\leq (|\cB|+ \compi{\sigma}{\tau}{n}{2}) \cdot n\] 
for all $n\in [\|\sigma\|,\langle \sigma\circ\tau\rangle]$.
\end{prop}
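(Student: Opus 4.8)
The plan is to bound $p_{\sigma\circ\tau(\cA^\Z)}(n)=|\cL_n(\sigma\circ\tau(\cA^\Z))|$ by classifying each length-$n$ factor according to how it straddles the $\sigma$-images of the letters of a $\cB$-preimage. First I would fix $u\in\cL_n(\sigma\circ\tau(\cA^\Z))$ together with an occurrence of it inside $\sigma(z)$ for some $z=\tau(a)$, $a\in\cA^\Z$; it suffices to bound the number of possible ``codes'' such an occurrence can produce, since every $u$ gives rise to at least one. As $\sigma$ is non-erasing, $\sigma(z)$ splits into consecutive blocks $\sigma(z_k)$, and the first letter of the occurrence lies inside one block $\sigma(z_k)$ at an offset $r$ with $0\le r<|\sigma(z_k)|\le\|\sigma\|\le n$. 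Write $b=z_k$, let $s\ge 1$ be maximal with $z_k=z_{k+1}=\cdots=z_{k+s-1}=b$ (the run of $b$ starting at $k$), set $p=|\sigma(b)|$, and let $d=sp-r$ be the length of the portion of $u$ covered by the image $\sigma(b)^{s}$ of this run.

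Then I would split into two cases. If $d\ge n$, the whole of $u$ lies inside $\sigma(b)^{s}$, so $u$ is a factor of the periodic word $\sigma(b)^{\infty}$; for each $b\in\cB$ there are at most $|\sigma(b)|\le\|\sigma\|\le n$ factors of length $n$ of a word of period $p=|\sigma(b)|$, so this case contributes at most $|\cB|\cdot n$ words in total. If $d<n$, then $u$ overruns the run: let $w=z_{k+s}z_{k+s+1}\cdots$ be the shortest factor of $z$ starting at position $k+s$ with $|\sigma(w)|\ge n$. By maximality of the run, $w_1=z_{k+s}\neq b$, and since $z_{k+s-1}=b$ the word $bw=z_{k+s-1}z_{k+s}\cdots$ is a factor of $z$ with $|\sigma(w_{[1,|w|-1]})|<n\le|\sigma(w)|$. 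The crux of the argument, and the step I expect to be the main obstacle, is the claim that $bw$ is a factor of $\tau(a_ja_{j+1})$ for two consecutive letters $a_j,a_{j+1}$ of $a$, hence lies in $\tau(\cA^2)$. This is where the hypothesis $n\le\langle\sigma\circ\tau\rangle$ is used: the letter $z_{k+s-1}$ sits inside some block $\tau(a_j)$ of $z=\cdots\tau(a_j)\tau(a_{j+1})\cdots$, and because $n\le\langle\sigma\circ\tau\rangle\le|\sigma(\tau(a_{j+1}))|$, the minimality of $w$ forces $w$ to terminate no later than the end of $\tau(a_{j+1})$. Consequently $w\in\mathcal{F}(b,n,\sigma,\tau(\cA^2))$.

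Finally I would verify that in this second case $u$ is completely determined by the triple $(b,w,d)$: its first $d$ letters form the length-$d$ suffix of $\sigma(b)^{s}$, a factor of $\sigma(b)^{\infty}$ determined by $b$ and $d$, while its remaining $n-d$ letters form the length-$(n-d)$ prefix of $\sigma(w)$, determined by $w$ and $d$. Since $1\le d<n$ there are at most $n$ values of $d$, and the number of admissible pairs $(b,w)$ is $\sum_{b\in\cB}|\mathcal{F}(b,n,\sigma,\tau(\cA^2))|=\compi{\sigma}{\tau}{n}{2}$, so this case contributes at most $\compi{\sigma}{\tau}{n}{2}\cdot n$. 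Adding the two bounds gives $p_{\sigma\circ\tau(\cA^\Z)}(n)\le(|\cB|+\compi{\sigma}{\tau}{n}{2})\cdot n$. The positivity of $\tau$ serves only to make the range $[\|\sigma\|,\langle\sigma\circ\tau\rangle]$ non-empty, since then every $\tau(a)$ contains all letters of $\cB$ and hence $|\sigma(\tau(a))|\ge\|\sigma\|$. Besides the two-block claim, the one delicate bookkeeping point is that the within-block offset and the run length can be merged into the single parameter $d<n$, which is precisely what the requirement $w_1\neq b$ in the definition of $\mathcal{F}$ is designed to allow.
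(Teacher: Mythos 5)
Your proof is correct and follows essentially the same route as the paper's: both split the length-$n$ factors into those contained in a single maximal power $\sigma(b)^{s}$ (at most $|\cB|\,\|\sigma\|\le |\cB|\,n$ words) and those that overrun into the next distinct letter, the latter being encoded by a letter $b$, a word $w\in\mathcal{F}(b,n,\sigma,\tau(\cA^{2}))$ and an overhang parameter, yielding the bound $\compi{\sigma}{\tau}{n}{2}\cdot n$. Your explicit justification that $bw$ is a factor of $\tau(\cA^{2})$, via the minimality of $w$ together with $n\le\langle\sigma\circ\tau\rangle$, spells out a step the paper leaves implicit, but the argument is the same.
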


\begin{proof}  
By definition, any word in $\sigma\circ\tau(\cA^{\Z})$ of length $n\in [\|\sigma\|,\langle \sigma\circ\tau\rangle]$ is a subword of $\sigma(b_{i_1})^{l_1}\ldots \sigma(b_{i_k})^{l_k}$, where $b_{i_1}^{l_1}\ldots b_{i_k}^{l_k} \in \tau(\cA^{2})$, $b_{i_j}\in \cB$ and $l_{j}\in \N^{\ast}$. 
We may consider two disjoint types of words of length $n$. 
\begin{itemize}
    \item[\textbf{Type 1}:] Words of length $n$ that start and end in a same block $\sigma(b)^{l}$ for some $b\in \cB$ and $l\in \N$. For each $b\in \cB$, such words are no more than $|\sigma(b)|$ and therefore there are no more than $|\cB|\|\sigma\|$ words of this type.   
    
    \item[\textbf{Type 2}:] Words of length $n$ that start in a block $\sigma(b)^{l}$ and end in another one. Any such word is a subword of $\sigma(b^{k}w)$, where $k=\lfloor n/|\sigma(b)|\rfloor$ and $w\in \mathcal{F}(b,n,\sigma, \tau(\cA^{2}))$. 
  Noting that there are at most $n$ subwords of length $n$ in $\sigma(b^{k}w)$ starting in the block $\sigma(b^{k})$ and ending in $\sigma(w)$, we conclude that words of \textbf{Type 2} are no more than 
   \[ \sum_{b\in \cB}n|\mathcal{F}(b,\sigma,n,\tau(\cA^{2}))|=\compi{\sigma}{\tau}{n}{2}\cdot n \] 
    \end{itemize}

Combining the two types of words provides the result. 
\end{proof}

Let us illustrate our approach by the following corollary that, even if not stated, can be deduced from \cite{Boyle&Handelman:1994}.
A straightforward consequence is the folklore result asserting that finite topological rank systems have zero entropy.

\begin{cor}\label{cor:FRZeroentropy}
Let $(X,S)$ be a $\cS$-adic subshift generated by the positive directed sequence $\boldsymbol{\tau}=(\tau_n\colon \cA_{n+1}^*\to \cA_{n}^{\ast})_{n\geq 0}$. 
If $\liminf_{n\to + \infty } \frac{\log(|\cA_{n+1}|)}{\langle {\tau}_{[0,n]}\rangle} = 0$,
then $(X,S)$ has zero entropy, {\it i.e.}, $\lim_{n\to 1} \log (p_X(n))/n=0$.

In particular, if $(X,S)$ is a minimal Cantor system of finite topological rank, then it has zero entropy.
\end{cor}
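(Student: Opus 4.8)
The plan is to control the word complexity $p_X$ at a carefully chosen sequence of scales and to use that, since $n \mapsto \log p_X(n)$ is subadditive ($p_X(m+n) \le p_X(m)p_X(n)$), the topological entropy equals $\lim_n \log p_X(n)/n = \inf_n \log p_X(n)/n$. Hence it suffices to exhibit scales $n_k \to \infty$ with $\log p_X(n_k)/n_k \to 0$, and the limit (which exists) will then be $0$. Rewriting the hypothesis with $M = n+1$ as $\liminf_M \log|\cA_M|/\langle \tau_{[0,M)}\rangle = 0$ (note $\tau_{[0,M)} = \tau_{[0,M-1]}$ has domain $\cA_M$), I would first extract a subsequence $(M_k)$, with $M_k \ge 1$, along which $\log|\cA_{M_k}|/\langle \tau_{[0,M_k)}\rangle \to 0$.

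For each such $M = M_k$ I would apply \cref{prop:bound2morphisms} to the factorization $\tau_{[0,M+1)} = \sigma\circ\tau$ with $\sigma = \tau_{[0,M)}$ and $\tau = \tau_M$; the morphism $\tau_M$ is positive because $\boldsymbol{\tau}$ is positive and $M \ge 1$. Since every word of $\cL(X)$ is a factor of $\tau_{[0,M+1)}(w)$ for some $w \in \cL(X_{\boldsymbol{\tau}}^{(M+1)})$, and hence a factor of $\sigma\circ\tau(\cA_{M+1}^\Z)$, one has $p_X(n) \le p_{\sigma\circ\tau(\cA^\Z)}(n)$, so the proposition gives, for $n \in [\|\sigma\|,\langle\sigma\circ\tau\rangle]$,
\[
p_X(n) \le \bigl(|\cA_M| + \compi{\sigma}{\tau}{n}{2}\bigr)\,n .
\]
The decisive choice is to evaluate at the \emph{left} endpoint $n = \|\sigma\| = \|\tau_{[0,M)}\|$, not at the top of the range, because there the relative-complexity term is controlled by \cref{item:RelComp1}: $\compi{\sigma}{\tau}{\|\sigma\|}{2} \le \comp{\sigma}{\tau}{\|\sigma\|} \le |\cA_M|^{\|\sigma\|/\langle\sigma\rangle + 1}$.

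I would check that this endpoint lies in the admissible range using positivity of $\tau_M$: since $\tau_M(a)$ contains every letter of $\cA_M$, one gets $|\tau_{[0,M+1)}(a)| = |\tau_{[0,M)}(\tau_M(a))| \ge \max_b |\tau_{[0,M)}(b)| = \|\tau_{[0,M)}\|$, whence $\langle\sigma\circ\tau\rangle = \langle\tau_{[0,M+1)}\rangle \ge \|\tau_{[0,M)}\| = \|\sigma\|$. Writing $P = \|\tau_{[0,M)}\|$, $Q = \langle\tau_{[0,M)}\rangle$ and $A = |\cA_M|$, the bound becomes $p_X(P) \le 2A^{P/Q+1}P$, so
\[
\frac{\log p_X(P)}{P} \le \frac{\log 2}{P} + \frac{\log A}{Q} + \frac{\log A}{P} + \frac{\log P}{P}.
\]
As $k \to \infty$ the heights force $P = \|\tau_{[0,M_k)}\| \to \infty$, so the first and last terms vanish; the term $\log A / Q = \log|\cA_{M_k}|/\langle\tau_{[0,M_k)}\rangle \to 0$ by the choice of subsequence, and $\log A/P \le \log A/Q \to 0$ since $P \ge Q$. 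Thus $\log p_X(P)/P \to 0$ along $n_k = \|\tau_{[0,M_k)}\| \to \infty$, and the entropy is $0$.

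For the final assertion, a finite topological rank minimal Cantor system is either an odometer, which is equicontinuous and hence of zero entropy, or expansive (\cref{DowMaass}); in the expansive case it is conjugate to $X_B^{(0)}$ for a Bratteli--Vershik representation of width bounded by the rank $d$, and telescoping along levels realizing primitivity turns the read morphisms $(\tau_n^B)_{n\ge 1}$ into positive ones while keeping $|\cA_n| \le d$. Since the tower heights $\langle \tau^B_{[0,n]}\rangle$ grow to infinity, $\log|\cA_{n+1}|/\langle\tau^B_{[0,n]}\rangle \le \log d / \langle\tau^B_{[0,n]}\rangle \to 0$, so the hypothesis holds and the first part applies. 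The main obstacle I anticipate is precisely the placement of the evaluation scale: one must land inside the range $[\|\sigma\|,\langle\sigma\circ\tau\rangle]$ (which is where positivity of $\tau_M$ is essential) and simultaneously pick the endpoint $n = \|\sigma\|$ so that \cref{item:RelComp1} produces the dominant term $\log|\cA_M|/\langle\tau_{[0,M)}\rangle$ matching the hypothesis, rather than an uncontrolled ratio such as $\log\|\tau_{[0,M)}\|/\langle\tau_{[0,M)}\rangle$.
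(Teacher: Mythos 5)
Your proposal is correct and follows essentially the same route as the paper: the paper also applies \cref{prop:bound2morphisms} to $\sigma=\tau_{[0,n]}$ and $\tau=\tau_{n+1}$, evaluates at the left endpoint $\|\tau_{[0,n]}\|$, and controls the relative complexity there via \ref{item:RelComp1} to obtain exactly the bound $\frac{\log(2|\cA_{n+1}|\|\tau_{[0,n]}\|)}{\|\tau_{[0,n]}\|}+\frac{\log|\cA_{n+1}|}{\langle\tau_{[0,n]}\rangle}$, concluding along the subsequence given by the hypothesis. Your added verifications (that $\|\sigma\|\le\langle\sigma\circ\tau\rangle$ by positivity, and the subadditivity argument showing a subsequence suffices) are details the paper leaves implicit, and the final assertion is handled the same way via \cref{theo:equivalenceSadicvsFiniteRank} and \cref{DowMaass}.
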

\begin{proof}                                                                                     
Let $\boldsymbol{\tau}=(\tau_n\colon \cA_{n+1}^*\to \cA_{n}^{\ast})_{n\geq 0}$ be a positive directed sequence of morphisms generating  $(X,S)$. By \ref{item:RelComp1}  we have that  
$\comp{{\tau}_{[0,n]}}{\tau_{n+1}}{\|{\tau}_{[0,n]}\|}\leq |\cA_{n+1}|^{\frac{\|{\tau}_{[0,n]}\|}{\langle {\tau}_{[0,n]} \rangle }+1}$. 
 \cref{prop:bound2morphisms} applied to the morphisms $\tau_{[0,n]}$ and $\tau_{n+1}$ implies that 
 \[  \frac{\log(p_{{\tau}_{[0,n+1]}(\cA_{n+2}^{\Z})}(\|{\tau}_{[0,n]}\|)}{ \|{\tau}_{[0,n]}\|} \leq \frac{\log(2|\cA_{n+1}|\|{\tau}_{[0,n]}\|)  }{\|{\tau}_{[0,n]}\| } + \frac{\|{\tau}_{[0,n]}\|}{\|{\tau}_{[0,n]}\| \langle {\tau}_{[0,n]}\rangle}\log(|\cA_{n+1}|), \] 
 which allows to conclude the first part of the statement. 
 The second part can be deduced from Theorem \ref{theo:equivalenceSadicvsFiniteRank} and Theorem \ref{DowMaass}.
 \end{proof}

As another application, we deduce some facts about the complexity of $\cS$-adic subshifts with an alphabet rank equal to 2. 

\begin{cor}\label{cor:SadicNonSuperQuadra}
Let $(X,S)$ be the $\cS$-adic subshift generated by the positive directive sequence $\boldsymbol{\tau}=(\tau_{n}\colon \cA^*_{n+1}\to \cA^*_{n})_{n\ge 0}$. If $\liminf_{n\to + \infty }|\cA_{n}|\leq 2$, then $(X,S)$ is sub-quadratic along a subsequence, \textit{i.e.},
\[\liminf_{n\to +\infty} \frac{p_{X}(n)}{n^2} = 0. \]
\end{cor}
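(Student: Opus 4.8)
The plan is to reduce the complexity of $X$ to that of a \emph{two-letter} $\cS$-adic factorization of one of its telescopings, and then to feed this into \cref{prop:bound2morphisms} together with the sharp two-letter estimate \cref{item:RelComp2} (rather than the exponential \cref{item:RelComp1}). First I would dispose of the trivial case: if $X$ is finite then $p_X$ is bounded and the statement is immediate, so assume $X$ infinite. In that case a level with a one-letter alphabet would force $X_{\boldsymbol\tau}^{(n)}$ to be a single point and hence $X$ periodic, so $|\cA_n|\ge 2$ for every $n\ge 1$; combined with $\liminf_n|\cA_n|\le 2$ this produces an increasing sequence $(m_k)$ with $|\cA_{m_k}|=2$. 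By positivity, $\langle\tau_n\rangle\ge|\cA_n|\ge 2$ for all $n\ge 1$, so both $\|\tau_{[0,m)}\|$ and $\langle\tau_{[0,m)}\rangle$ tend to infinity with $m$. For each $m=m_k$ I split the composed morphism as $\tau_{[0,M)}=\sigma\circ\tau$ with $\sigma=\tau_{[0,m)}\colon\cA_m^*\to\cA_0^*$ and $\tau=\tau_{[m,M)}\colon\cA_M^*\to\cA_m^*$, where $M>m$ is chosen below; here $\tau$ is positive (a composition of the positive $\tau_n$, $n\ge 1$) and the domain alphabet of $\sigma$ has exactly two letters.

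Next I would record, for every $M\ge 1$ and every $n$, the comparison $p_X(n)\le p_{\tau_{[0,M)}(\cA_M^\Z)}(n)$. Indeed $\cL(X)\subseteq\cL^{(0)}(\boldsymbol\tau)$, and every word of $\cL^{(0)}(\boldsymbol\tau)$ is a factor of $\tau_{[0,M)}(v)$ for some $v\in\cA_M^*$: a word occurring in $\tau_{[0,N)}(a)$ with $N\le M$ is handled by absorbing the lower levels via primitivity, and one with $N>M$ by writing $\tau_{[0,N)}(a)=\tau_{[0,M)}(\tau_{[M,N)}(a))$. Since $\cA_M^*$ is the language of the full shift $\cA_M^\Z$, this gives $\cL(X)\subseteq\cL(\tau_{[0,M)}(\cA_M^\Z))$ and hence the claimed comparison of complexity functions.

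Then I would apply \cref{prop:bound2morphisms} at the single length $n=\|\sigma\|=\|\tau_{[0,m)}\|$. To land this in the admissible interval $[\|\sigma\|,\langle\sigma\circ\tau\rangle]$ I pick $M=M_k$ large enough that $\langle\tau_{[m,M)}\rangle\ge\|\tau_{[0,m)}\|/\langle\tau_{[0,m)}\rangle$ (possible since $\langle\tau_{[m,M)}\rangle\to\infty$ as $M\to\infty$), so that $\langle\tau_{[0,M)}\rangle\ge\langle\tau_{[0,m)}\rangle\langle\tau_{[m,M)}\rangle\ge\|\sigma\|$. The proposition and the inequality $\compi{\sigma}{\tau}{n}{2}\le\comp{\sigma}{\tau}{n}$ then give
\[
p_{\tau_{[0,M)}(\cA_M^\Z)}(\|\sigma\|)\le\big(|\cA_m|+\compi{\sigma}{\tau}{\|\sigma\|}{2}\big)\|\sigma\|\le\big(2+\comp{\sigma}{\tau}{\|\sigma\|}\big)\|\sigma\|.
\]
Since $|\cA_m|=2$, \cref{item:RelComp2} bounds $\comp{\sigma}{\tau}{\|\sigma\|}\le\|\sigma\|/\langle\sigma\rangle+1$, and combining with the comparison of the previous step,
\[
p_X(\|\sigma\|)\le\Big(3+\frac{\|\sigma\|}{\langle\sigma\rangle}\Big)\|\sigma\|.
\]

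Finally, dividing by $\|\sigma\|^2$ yields $p_X(\|\sigma\|)/\|\sigma\|^2\le 3/\|\sigma\|+1/\langle\sigma\rangle$, and evaluating along $m=m_k$ (where $\|\tau_{[0,m)}\|\to\infty$ and $\langle\tau_{[0,m)}\rangle\to\infty$) forces $\liminf_n p_X(n)/n^2=0$. The main obstacle I anticipate is the bookkeeping of the third step: arranging that the chosen length $\|\sigma\|$ falls in the range where \cref{prop:bound2morphisms} is valid while keeping the final bound independent of $M$. This works out precisely because the right-hand side depends only on $m$, so enlarging $M$ to secure the range costs nothing, and the two-letter hypothesis is exactly what upgrades the generic exponential bound \cref{item:RelComp1} to the linear one \cref{item:RelComp2}.
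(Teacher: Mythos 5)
Your proof is correct and follows essentially the same route as the paper's: apply \cref{prop:bound2morphisms} with $\sigma=\tau_{[0,m)}$ at the single length $n=\|\sigma\|$, invoke the two-letter bound \ref{item:RelComp2} for $\comp{\sigma}{\tau}{\|\sigma\|}$, and divide by $\|\sigma\|^2$ along the levels where the alphabet has two letters. The only deviation is that you compose several morphisms $\tau=\tau_{[m,M)}$ (rather than the single next morphism used in the paper) so as to guarantee $\|\sigma\|\le\langle\sigma\circ\tau\rangle$, i.e.\ that the chosen length lies in the admissible interval of \cref{prop:bound2morphisms} --- a point the paper's proof glosses over --- which is a harmless and in fact slightly more careful refinement of the same argument.
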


\begin{proof}
Similarly to the previous proof, for $n\in \N$ consider the morphisms $\sigma={\tau}_{[0,n]}$ and $\tau_{n+1}$ and apply \cref{prop:bound2morphisms} and \ref{item:RelComp2} to obtain, for infinitely many $n$,
\begin{align*}p_{X}(\|{{\tau}_{[0,n]}}\|) & \leq p_{{\tau}_{[0,n]}\circ \tau_{n+1}(\cA_{n+2}^\mathbb{Z})}(\|{{\tau}_{[0,n]}}\|), \\ & \leq (2+\compi{{\tau}_{[0,n]}}{\tau_{n+1}}{ \|{\tau}_{[0,n]}\|}{2})  \|{{\tau}_{[0,n]}}\|, \\ 
&\leq \left( \frac{\|{{\tau}_{[0,n]}}\| }{\langle {{\tau}_{[0,n]}} \rangle} +3 \right) \|{{\tau}_{[0,n]}}\|. \end{align*}

Dividing by $\|{{\tau}_{[0,n]}}\|^2$ and taking the liminf in $n$ give the conclusion. 
\end{proof}

Concerning the growth rate of the complexity function of an $\cS$-adic subshift of alphabet rank $2$, \cref{cor:FRZeroentropy} shows it is sub-exponential.  
Beyond this, examples in \cite[Section 4]{Donoso&Durand&Maass&Petite:2016} show that there are no more precise bounds for the growth rate of the complexity function. Actually, for any sub-exponential function $\varphi\colon \N\to \R$ (\textit{i.e.}, $\limsup\limits_{n\to +\infty}\frac{\varphi(n)}{\alpha^n}=0$ for any $\alpha>1$), there are examples of $\cS$-adic subshifts with alphabet rank equal to 2 with $\limsup\limits_{n\to +\infty} \frac{p_X(n)}{\varphi(n)}>0$.  

\subsection{$\cS$-adic subshifts with non-superlinear complexity}
We provide several criteria that ensure that a $\cS$-adic subshift has a non-superlinear complexity. 

\begin{cor}  \label{cor:proptowersimplynonsup}
Let $(X,S)$ be the $\cS$-adic subshift generated by the positive directive sequence $\boldsymbol{\tau}=(\tau_{n}\colon \cA^*_{n+1}\to \cA^*_{n})_{n\ge 0}$. Assume that $\boldsymbol{\tau}$ has the proportional towers property, \textit{i.e.},  $K=\liminf_{n\to +\infty} \frac{\|{\tau}_{[0,n]}\|}{\langle {\tau}_{[0,n]} \rangle}<+\infty$ and that $(|\cA_n|)_{n\geq 0}$ is bounded. 
Then we have that \[ \liminf_{n\to +\infty} \frac{p_{X}(n)}{n} < +\infty.  \] 
\end{cor}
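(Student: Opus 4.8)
The plan is to mimic the proof of \cref{cor:SadicNonSuperQuadra}, evaluating the complexity function at the lengths $\|\tau_{[0,n]}\|$, but replacing the two-letter estimate \ref{item:RelComp2} by the general bound \ref{item:RelComp1} and exploiting that the alphabets stay bounded. First I would dispose of the case where $X$ is finite, in which the conclusion is trivial since $p_X$ is then bounded; so I assume $X$ is infinite, in which case $\|\tau_{[0,n]}\|\to+\infty$. Then I would fix $d=\sup_{n}|\cA_n|<+\infty$ and pick an increasing sequence $(n_j)_{j\ge 1}$ along which $\|\tau_{[0,n_j]}\|/\langle\tau_{[0,n_j]}\rangle$ converges to $K$.

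For each $n$ I would apply \cref{prop:bound2morphisms} to $\sigma=\tau_{[0,n]}$ and the positive morphism $\tau=\tau_{n+1}$ (positive because $\boldsymbol{\tau}$ is), so that $\sigma\circ\tau=\tau_{[0,n+1]}$, evaluating at the length $m_n:=\|\tau_{[0,n]}\|$. Two points need checking. First, the admissibility $m_n\in[\|\sigma\|,\langle\sigma\circ\tau\rangle]$: the left endpoint equals $m_n$, and for the right endpoint I would note that positivity of $\tau_{n+1}$ forces $\tau_{n+1}(a)$ to contain every letter of $\cA_{n+1}$, whence $|\tau_{[0,n+1]}(a)|\ge\|\tau_{[0,n]}\|$ for every $a\in\cA_{n+2}$, and thus $\langle\tau_{[0,n+1]}\rangle\ge m_n$. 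Second, exactly as in \cref{cor:SadicNonSuperQuadra}, I would justify $p_X(m_n)\le p_{\tau_{[0,n+1]}(\cA_{n+2}^{\Z})}(m_n)$: any word of $\cL(X)$ of length $m_n\le\langle\tau_{[0,n+1]}\rangle$ appears inside some $\tau_{[0,n+1]}(y)$, and being no longer than a single block it straddles at most two consecutive blocks $\tau_{[0,n+1]}(a)\tau_{[0,n+1]}(b)$, hence already occurs in the language of $\tau_{[0,n+1]}(\cA_{n+2}^{\Z})$.

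Combining these with \cref{prop:bound2morphisms} and the monotonicity $\compi{\tau_{[0,n]}}{\tau_{n+1}}{m_n}{2}\le\comp{\tau_{[0,n]}}{\tau_{n+1}}{\|\tau_{[0,n]}\|}$, the bound \ref{item:RelComp1} yields
\[ p_X(m_n)\le\Big(|\cA_{n+1}|+|\cA_{n+1}|^{\frac{\|\tau_{[0,n]}\|}{\langle\tau_{[0,n]}\rangle}+1}\Big)\,m_n. \]
Restricting to $n=n_j$ and using $|\cA_{n_j+1}|\le d$ together with $\|\tau_{[0,n_j]}\|/\langle\tau_{[0,n_j]}\rangle\le K+1$ for $j$ large, the parenthesized factor is bounded by $d+d^{K+2}$, a constant independent of $j$. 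Dividing by $m_{n_j}$ gives $p_X(m_{n_j})/m_{n_j}\le d+d^{K+2}$, and since $m_{n_j}\to+\infty$ this forces $\liminf_{n\to+\infty}p_X(n)/n\le d+d^{K+2}<+\infty$, which is the assertion.

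Since the argument is essentially a recombination of lemmas already established, I do not expect a serious obstacle. The only genuinely load-bearing steps are the two verifications of the second paragraph: the admissibility of the evaluation length, handled by the positivity of $\tau_{n+1}$, and the passage from $p_X$ to the complexity of the two-level approximation $\tau_{[0,n+1]}(\cA_{n+2}^{\Z})$. The latter is where one must be careful that a short word sees at most two blocks, and this is precisely what the inequality $m_n\le\langle\tau_{[0,n+1]}\rangle$ secures; everything else is the bookkeeping of feeding \ref{item:RelComp1} into \cref{prop:bound2morphisms} along the subsequence realizing $K$.
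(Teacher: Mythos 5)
Your proof is correct and follows essentially the same route as the paper's: choose a subsequence realizing the liminf of $\|\tau_{[0,n]}\|/\langle\tau_{[0,n]}\rangle$, apply \cref{prop:bound2morphisms} to $\sigma=\tau_{[0,n]}$ and $\tau=\tau_{n+1}$ at length $\|\tau_{[0,n]}\|$, and bound the relative complexity via \ref{item:RelComp1} using the boundedness of the alphabets. The only difference is that you spell out the admissibility of the evaluation length and the comparison $p_X\le p_{\tau_{[0,n+1]}(\cA_{n+2}^{\Z})}$, which the paper leaves implicit.
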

\begin{proof}
Let $(n_i)_{i\in \N}$ be a sequence in $\N$ such that $\frac{\|{\tau}_{[0,n_i]}\|}{\langle {\tau}_{[0,n_i]} \rangle} \leq  K$ for all $i\in \N$. Then by \cref{prop:bound2morphisms} and \ref{item:RelComp1} we have that
\begin{align*}
p_{X}(\|{{\tau}_{[0,n_i]}}\|) & \leq p_{{\tau}_{[0,n_i]}\circ \tau_{n_i+1}(\cA_{n_i+2}^\mathbb{Z})}(\|{{\tau}_{[0,n_i]}}\|), \\
& \leq (| \cA_{n_i+1}|^{\frac{\| {\tau}_{[0,n_i]}\| }{ \langle {\tau}_{[0,n_i]} \rangle } +1 }  +|\cA_{n_i+1}|) \|{{\tau}_{[0,n_i]}}\|, \\
& \leq (\max_{i\in \N} |A_i| ^{K+1}+ \max_{i\in \N} |A_i|) \|{{\tau}_{[0,n_i]}}\|,
\end{align*}
from where the result follows. 
\end{proof}

In terms of clopen Kakutani-Rohlin partitions, as a direct corollary of \cref{cor:proptowersimplynonsup} we obtain. 
 \begin{cor}
 Let $(X,T)$ be an expansive  minimal Cantor system with a sequence of CKR partitions $(\mathcal{T}_n)_{n\in \N}$ satisfying (KR1)-(KR4), $\mathcal{T}_n$ having $d_n$ towers and heights $(h_n (k))_{1\leq k \leq d_n}$ for each $n \in \N$. 
 Suppose they satisfy $\sup_{n\in \N}d_{n} < +\infty $ and  $\liminf_{n\to +\infty} \sup_{1 \le k, k' \le d_{n}} h_{n}(k) / h_{n}(k') < +\infty$.
 Then the system $(X,T)$ is topologically conjugate to a minimal subshift with a non-superlinear complexity.     
 \end{cor}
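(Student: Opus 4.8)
The plan is to reduce the statement directly to \cref{cor:proptowersimplynonsup} by reading its hypotheses off the Bratteli--Vershik representation canonically attached to the nested CKR partitions. First I would record the dictionary between the partition data and the directive sequence $\boldsymbol{\tau}^B=(\tau_n^B\colon V_{n+1}^*\to V_n^*)_{n\ge 0}$ read on the diagram $B$ determined by $(\mathcal{T}_n)_{n\ge 0}$: one has $d_n=|V_n|$, and, by an immediate induction on $n$ using $\tau_{[0,n+1)}^B=\tau_{[0,n)}^B\circ\tau_n^B$ together with the height recursion $h_{n+1}(k)=\sum_{\range(e)=k}h_n(\source(e))$, the key identity $|\tau_{[0,n)}^B(k)|=h_n(k)$ for every $k\in V_n$. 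Consequently $\|\tau_{[0,n)}^B\|=\max_k h_n(k)$ and $\langle\tau_{[0,n)}^B\rangle=\min_k h_n(k)$, so the two hypotheses translate exactly into $\sup_n|V_n|<+\infty$ and the proportional towers property $\liminf_n \|\tau_{[0,n)}^B\|/\langle\tau_{[0,n)}^B\rangle<+\infty$. Since $\sup_n d_n<+\infty$ the system has finite topological rank, and since it is expansive I would invoke Theorem \ref{DowMaass} and the accompanying remark that, after telescoping, $(X,T)$ is topologically conjugate to its level-$0$ subshift $(X_B^{(0)},S)$, which is precisely the $\cS$-adic subshift generated by $\boldsymbol{\tau}^B$.

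The one genuine difficulty is that \cref{cor:proptowersimplynonsup} requires a \emph{positive} directive sequence, whereas $\boldsymbol{\tau}^B$ is only primitive. I would fix this by a single coordinated telescoping. Let $(n_i)_i$ realise the finite $\liminf$ above and let $n_*$ be a level for which the level-$n_*$ coding map $X_B\to X_B^{(n_*)}$ is injective (such a level exists by expansiveness). I then build telescoping levels $0=m_0<m_1<\cdots$ recursively: given $m_k$, simplicity of $B$ provides a threshold beyond which $M_{\tau_{[m_k,N)}^B}>0$ for all large $N$ (positivity is preserved under further multiplication because every range fibre is non-empty), so I may pick $m_{k+1}$ inside the subsequence $(n_i)$ and large enough to make $M_{\tau_{[m_k,m_{k+1})}^B}>0$, and I additionally require $m_1\ge n_*$. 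The telescoped diagram $B'$ then yields a positive directive sequence $\boldsymbol{\tau}'=\boldsymbol{\tau}^{B'}$ with $|\mathcal{A}'_k|=d_{m_k}\le\sup_n d_n$; moreover $\|\tau'_{[0,k]}\|/\langle\tau'_{[0,k]}\rangle=\max_v h_{m_{k+1}}(v)/\min_v h_{m_{k+1}}(v)$ stays bounded because each $m_{k+1}\in(n_i)$, so the proportional towers property survives. That the conjugacy with the level-$0$ subshift also survives the extra telescoping follows because the new level-$0$ coding records the full paths $V_0\to V_{m_1}$, which (as $m_1\ge n_*$) determine the level-$n_*$ vertex at each time; hence it refines the level-$n_*$ coding and is therefore still injective.

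It then remains to apply \cref{cor:proptowersimplynonsup} to $\boldsymbol{\tau}'$, which gives $\liminf_n p_{X_{\boldsymbol{\tau}'}}(n)/n<+\infty$, and to conclude by conjugacy invariance of the complexity function that $(X,T)$ is conjugate to the minimal subshift $X_{\boldsymbol{\tau}'}$ of non-superlinear complexity. I expect the main obstacle to be precisely the bookkeeping of this combined telescoping: choosing the levels so that positivity of the morphisms, injectivity of the level-$0$ coding, and preservation of the $\liminf$-realising subsequence all hold simultaneously, while carefully verifying the height--length identity $|\tau_{[0,n)}^B(k)|=h_n(k)$ that drives the whole translation.
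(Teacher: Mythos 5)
Your proof is correct and follows exactly the route the paper intends: the paper states this result as a direct corollary of \cref{cor:proptowersimplynonsup} with no written argument, and your translation dictionary $|\tau_{[0,n)}^B(k)|=h_n(k)$, $d_n=|V_n|$ is precisely the implicit reduction. The extra care you take — telescoping along the $\liminf$-realising subsequence to achieve positivity while preserving both the proportional-towers bound and the injectivity of the level-$0$ coding — correctly supplies the details the paper omits.
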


The preceding corollary applies to finite topological rank subshifts that admit tower partitions where all the towers of the same level have the same height (this is called the {\em equal row sum} property, or ERS for short). The class of subshifts having the ERS property coincides with the one of Toeplitz subshifts \cite[Theorem 8]{gjtoeplitz}. 
Although finite topological rank Toeplitz subshifts have the ERS property, we do not know if they have a Bratteli-Vershik representation having simultaneously the ERS property and a bounded number of towers at each level.
So we ask if within the class of Toeplitz subshifts, to have finite topological rank and non-superlinear complexity are equivalent conditions.
 
The following lemma gives a condition to deduce the hypothesis (hence the conclusion) of \cref{cor:proptowersimplynonsup} by 
analyzing the adjacency matrices associated to the morphisms of the given directive sequence. 
Before stating it, we introduce and recall the following definitions. For a letter $a \in \cA$, $|w|_{a}$ denotes the number of occurrences of the letter $a$ in $w$, and $\ell(w)$ is the vector $(|w|_{a})_{a\in \cA}$.  
For a morphism $\tau \colon \cA^* \to \cB^{\ast}$, its {\em incidence matrix}  is $M_{\tau} = (|\tau(a)|_{b})_{b \in \cB, a\in \cA}$. 
The following formulas are classical. 
For any word $w \in \cA^{*}$, $ \ell(\tau(w)) = M_{\tau}\ell(w)$ and for two morphisms $\tau_{0} \colon \cA_{1}^*\to \cA_{0}^{*}$ and $\tau_{1} \colon \cA_{2}^* \to \cA_{1}^{*}$, $M_{\tau_{0}\circ \tau_{1}} = M_{\tau_{0}}M_{\tau_{1}}$. For a positive matrix $M = (m_{i,j})_{(i,j) \in I\times J }$, we set  $D(M) = \sup_{i\in I} \sup_{j,k\in J} \frac{m_{i,j}}{m_{i,k}}$.
Note that a  bounded sequence of  positive matrices $(M_{n})_{n\geq 0}$ leads to a bounded sequence $(D(M_{n}))_{n\geq 0}$. However the converse is not true, for instance consider the matrices $M_{n}=\begin{pmatrix}
n & n+1 \\
n^{2} & (n+1)^{2}
\end{pmatrix}$. 

\begin{lem}\label{lem:SadicNonsuperlinear} 
Let $\boldsymbol{\tau} = (\tau_n  \colon  \mathcal{A}_{n+1}^* \to \mathcal{A}_n^*)_{n\ge 0}$ be a sequence of positive and proper morphisms.
Then, for any integer $n \ge 0$ we have
\[
\frac{\|  \boldsymbol{\tau}_{[0,n]}\|}{\langle  \boldsymbol{\tau}_{[0,n]} \rangle}
\le D(M_n).
\] 
In particular, $\liminf_{n\to +\infty} \frac{\| \boldsymbol{\tau}_{[0,n]}\|}{\langle  \boldsymbol{\tau}_{[0,n]}\rangle}$ is finite whenever $\liminf_{n\to +\infty} D(M_n)$ is finite.
\end{lem}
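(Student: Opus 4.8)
The plan is to reduce the whole statement to the elementary \emph{mediant inequality}: if $y_b>0$ and $x_b\ge 0$ (not all zero) for $b$ ranging over a finite set, then $\frac{\sum_b x_b}{\sum_b y_b}\le \max_b \frac{x_b}{y_b}$. The key point will be to write both $\|\boldsymbol{\tau}_{[0,n]}\|$ and $\langle \boldsymbol{\tau}_{[0,n]}\rangle$ as sums over the intermediate alphabet $\cA_n$ carrying the \emph{same} weights, so that these weights cancel when the mediant inequality is applied and only entries of $M_n=M_{\tau_n}$ survive.

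First I would decompose $\boldsymbol{\tau}_{[0,n]}=\boldsymbol{\tau}_{[0,n-1]}\circ \tau_n$, reading $\boldsymbol{\tau}_{[0,-1]}$ as the identity on $\cA_0$ so that the case $n=0$ is included. For a letter $a\in \cA_{n+1}$, the word $\tau_n(a)$ contains exactly $(M_n)_{b,a}=|\tau_n(a)|_b$ occurrences of each $b\in \cA_n$, so applying the non-erasing morphism $\boldsymbol{\tau}_{[0,n-1]}$ and writing $v_b:=|\boldsymbol{\tau}_{[0,n-1]}(b)|\ge 1$ (with $v_b=1$ when $n=0$) gives
\[
|\boldsymbol{\tau}_{[0,n]}(a)|=\sum_{b\in \cA_n}(M_n)_{b,a}\,v_b.
\]
This is nothing but the classical identity $M_{\boldsymbol{\tau}_{[0,n]}}=M_{\boldsymbol{\tau}_{[0,n-1]}}M_n$ paired against the all-ones vector, and it exhibits each length as a $v$-weighted column sum of $M_n$.

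Then, choosing $a^{*},a_{*}\in \cA_{n+1}$ that realize respectively the maximum and the minimum of $a\mapsto |\boldsymbol{\tau}_{[0,n]}(a)|$, the mediant inequality applied to $x_b=v_b(M_n)_{b,a^{*}}$ and $y_b=v_b(M_n)_{b,a_{*}}$ yields
\[
\frac{\|\boldsymbol{\tau}_{[0,n]}\|}{\langle \boldsymbol{\tau}_{[0,n]}\rangle}
=\frac{\sum_b v_b(M_n)_{b,a^{*}}}{\sum_b v_b(M_n)_{b,a_{*}}}
\le \max_{b\in \cA_n}\frac{v_b(M_n)_{b,a^{*}}}{v_b(M_n)_{b,a_{*}}}
=\max_{b\in \cA_n}\frac{(M_n)_{b,a^{*}}}{(M_n)_{b,a_{*}}}\le D(M_n),
\]
the weights $v_b$ cancelling term by term in the maximum. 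Positivity of $\tau_n$ guarantees that each $(M_n)_{b,a_{*}}>0$, so all the ratios above are well defined and $D(M_n)$ is finite. The stated consequence about $\liminf$ then follows at once by passing to the $\liminf$ in $n$.

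There is no genuine obstacle here: the only point that has to be handled correctly is the cancellation of the weights $v_b$, which is precisely why one applies the mediant inequality to the two columns $a^{*}$ and $a_{*}$ of the single matrix $M_n$ weighted by a common vector, rather than attempting to bound the ratio through $D$ of the full product matrix $M_{\boldsymbol{\tau}_{[0,n]}}$. Note that positivity is used only to make $D(M_n)$ finite, whereas properness of the morphisms plays no role in this inequality.
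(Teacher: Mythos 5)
Your proof is correct and follows essentially the same route as the paper: both decompose $\boldsymbol{\tau}_{[0,n]}=\boldsymbol{\tau}_{[0,n-1]}\circ\tau_n$, express $|\boldsymbol{\tau}_{[0,n]}(a)|$ as the weighted column sum $\sum_{c\in\cA_n} v_c\,|\tau_n(a)|_c$ with $v_c=|\boldsymbol{\tau}_{[0,n-1]}(c)|$, and conclude by comparing two such weighted sums so that the weights cancel against $D(M_n)$. Your explicit invocation of the mediant inequality simply makes precise the final step the paper leaves implicit.
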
 
 \begin{proof}
 Let $M_{n}$ denotes the incidence matrix of the morphism $\tau_{n}$. 
For  any letter $a \in \cA_{n+1}$, notice that  
\begin{eqnarray*}
| \tau_{0} \circ \cdots \circ \tau_{n} (a)| &= \langle (1,\ldots, 1)^{t} , \ell( \tau_{0} \circ \cdots \circ \tau_{n} (a)) \rangle , \\    
&= \langle (M_{0}\cdots M_{n-1})^{t} (1,\ldots, 1)^{t}, M_{n}\ell(a) \rangle,
\end{eqnarray*}
where $\langle \cdot, \cdot \rangle$ denotes the usual scalar product. 
Let $\{e_{c} : c\in \cA_{n} \}$ be the canonical basis and let $v_{c} >0$, $c\in \cA_{n}$, be a scalars such that $ (M_{0}\cdots M_{n-1})^{t} (1,\ldots, 1)^{t} =\sum_{c\in \cA_{n}} v_{c} e_{c}$. It follows that,
\[
  \langle (M_{0}\cdots M_{n-1})^{t} (1,\ldots, 1)^{t}, M_{n}\ell(a) \rangle = \sum_{c\in \cA_{n}} v_{c} \langle e_{c}, M_{n}\ell(a) \rangle.
\] 
Since $\langle e_{c}, M_{n}\ell(a) \rangle = | \tau_{n}(a)|_{c}$, we get for any pair of letters $a,b$ 
\[
| \tau_{0} \circ \cdots \circ \tau_{n} (a)| \le | \tau_{0} \circ \cdots \circ \tau_{n} (b)| \sup_{c\in \cA_{n}} \sup_{i,j\in \cA_{n+1}} \frac{|\tau_{n} (i) |_{c}}{|\tau_{n} (j) |_{c}}.
\]
This shows the lemma.
\end{proof}

This lemma applies for instance to $\cS$-adic subshifts given by morphisms  $(\tau_{n})_{n \ge 0}$ defined on alphabets of bounded size and such that $D(M_{n})$ is bounded for infinitely many $n\in \N$.

\subsection{The repetition complexity}
The relative complexities $\compi{\sigma}{\tau}{n}{i}$ may not be easy to compute in explicit examples. We introduce a related notion that is easier to analyze and that provides an upper bound for $\compi{\sigma}{\tau}{n}{i}$ for values of $n$ of interest. 

Let $\tau\colon \cA^*\to \cB^{\ast}$ be a morphism (we set $\cB=\{b_1,\ldots,b_{|\cB|}\}$). For $a\in \cA$, write $\tau(a)=b_{i_1}^{l_1}b_{i_2}^{l_2}\cdots b_{i_{k(a)}}^{l_{k(a)}}$  for some $i_1,\ldots,i_{k(a)}\in \{1,\ldots,|\cB|\}$, where $b_{i_{j+1}}\neq b_{i_j}$ for all $j=1,\ldots,k(a)-1$. The integer $k(a)$ represents the number of times one needs to switch letters while writing $\tau(a)$, plus one.  
The repetition complexity of $\tau$ is defined as
\[\rcomp (\tau)= \sum_{a\in \cA} k(a). \] 

Note that if $\tau$ is positive then $k(a)\geq |\cB|$ for all $a\in \cA$ and then $\rcomp (\tau)\geq |\cA||\cB|$.

\subsubsection*{Example:} We say that a morphism $\tau\colon \cA^*\to \cB^{\ast}$ is \textit{left to right} if there exists an ordering $b_1,\ldots,b_{|\cB|}$ of the elements of $\cB$ such that 
$\tau(a)=b_1^{l_1(a)}b_2^{l_2(a)}\cdots b_{|\cB|}^{l_{|\cB|}(a)}$ for all $a\in \cA$ for some $l_1(a),\ldots,l_{|\cB|}(a)\in \N^{\ast}$.
It is clear that if $\tau$ is left to right then $\rcomp (\tau)=|\cA||\cB|$ and so a left to right morphism attains the minimum value of the repetition complexity for primitive morphisms. 

\begin{lem}\label{lem:boundCompRep}
Let $\tau\colon \cA^*\to\cB^{\ast}$ and $\sigma\colon \cB^*\to\cC^{\ast}$ be two morphisms. The following properties hold:
\begin{enumerate} 
\item 
$\compi{\sigma}{\tau}{n}{1}\leq \rcomp (\tau)$, \text{ for all } $n\in \N$;
\label{item:P1Comp}
\item  $ \compi{\sigma}{\tau}{n}{2}\leq |\cA|\ \rcomp(\tau) +\compi{\sigma}{\tau}{n}{1} \leq (|\cA|+1)\ \rcomp (\tau)$,  \text{ for all }$n\in \N$. \label{item:P2comp}

\end{enumerate} 
\end{lem}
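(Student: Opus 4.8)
The plan is to \emph{anchor} every word counted by $\compi{\sigma}{\tau}{n}{i}$ at a \emph{block boundary} of an image word $\tau(a_1\cdots a_i)$, that is, a place where a maximal run $b^{\ell}$ of a letter $b$ is immediately followed by a different letter. The block decomposition $\tau(a)=b_{i_1}^{l_1}\cdots b_{i_{k(a)}}^{l_{k(a)}}$ shows that $\tau(a)$ has exactly $k(a)-1$ such boundaries, which is precisely what connects the count to $\rcomp(\tau)=\sum_{a}k(a)$. The second ingredient I would use throughout is that, by the minimality condition $|\sigma(w_{[1,|w|-1]})|<n\le|\sigma(w)|$ in the definition of $\mathcal{F}$, a word $w$ is \emph{uniquely} determined by the position at which it starts inside a fixed image word.

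For \eqref{item:P1Comp}, fix $b\in\cB$ and $w\in\mathcal{F}(b,n,\sigma,\tau(\cA))$. Since $w_1\neq b$ and $bw$ occurs in some $\tau(a)$, the letter $b$ preceding $w$ ends a maximal $b$-block of $\tau(a)$, so this occurrence sits at one of its block boundaries. I would send the pair $(b,w)$ to one such boundary; the resulting map is injective, because the boundary recovers $b$ (the block letter) and, by the minimality remark, the continuation $w$. Hence the pairs $(b,w)$ inject into the set of block boundaries of the words $\{\tau(a):a\in\cA\}$, of total cardinality $\sum_{a\in\cA}(k(a)-1)$, which gives $\compi{\sigma}{\tau}{n}{1}\le\sum_{a\in\cA}(k(a)-1)\le\rcomp(\tau)$.

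For \eqref{item:P2comp} only the first inequality needs an argument, the second being exactly \eqref{item:P1Comp}. I would split $\mathcal{F}(b,n,\sigma,\tau(\cA^{2}))=\mathcal{F}(b,n,\sigma,\tau(\cA))\cup B_b$, where $B_b$ collects those $w$ for which $bw$ occurs in some $\tau(a_1)\tau(a_2)$ but in no single $\tau(a)$. For $w\in B_b$ every occurrence of $bw$ straddles the junction of $\tau(a_1)\tau(a_2)$, so its anchoring boundary lies strictly inside $\tau(a_1)$ or exactly at the junction; for each ordered pair $(a_1,a_2)$ there are at most $(k(a_1)-1)+1=k(a_1)$ such positions, and each position determines at most one pair $(b,w)$. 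Choosing one straddling occurrence per element of $\bigcup_b B_b$ therefore injects this set into the set of such positions, whence $\sum_b|B_b|\le\sum_{(a_1,a_2)\in\cA^{2}}k(a_1)=|\cA|\,\rcomp(\tau)$. Adding the contribution $\sum_b|\mathcal{F}(b,n,\sigma,\tau(\cA))|=\compi{\sigma}{\tau}{n}{1}$ yields $\compi{\sigma}{\tau}{n}{2}\le|\cA|\,\rcomp(\tau)+\compi{\sigma}{\tau}{n}{1}$.

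The hard part is the bookkeeping in \eqref{item:P2comp}: one must cleanly peel off the words already carried by $\compi{\sigma}{\tau}{n}{1}$ (those possessing an occurrence inside a single $\tau(a)$) from the genuinely two-block words, and then verify that each of the latter can be charged to a block boundary lying at or before a junction, so that no straddling word is overlooked and the total does not exceed $|\cA|\,\rcomp(\tau)$. Once the observation that minimality pins down $w$ is in place, both estimates reduce to injections into explicit sets of block boundaries.
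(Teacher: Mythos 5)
Your argument is correct and follows essentially the same route as the paper's proof: part (1) injects each pair $(b,w)$ into the set of end-of-block occurrences of $b$ in the words $\tau(a)$, and part (2) splits $\mathcal{F}(b,n,\sigma,\tau(\cA^2))$ into words whose occurrence lies in a single $\tau(a)$ (charged to $\compi{\sigma}{\tau}{n}{1}$) and straddling words (charged, per ordered pair $(a_1,a_2)$, to the $k(a_1)$ block-end positions of $\tau(a_1)$). Your version merely makes explicit the injectivity point that the length condition in the definition of $\mathcal{F}$ determines $w$ from its starting position, which the paper leaves as "clearly injective," and records the marginally sharper bound $\sum_a(k(a)-1)$ in part (1).
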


\begin{proof}
By definition, for any word $w \in \mathcal{F}(b,n,\sigma,\tau(\cA))$, the word  $bw $ appears in some $\tau(a)$, $a\in \cA$. We can associate to $w$ an occurrence of $b$ corresponding to the first letter of $bw$  in $\tau(a)$. Clearly this association is injective and each such occurrence of $b$ contributes by 1 to the value of $k(a)$. 
So    \[\compi{\sigma}{\tau}{n}{1}=\sum_{b\in \cB}|\mathcal{F}(b,n,\sigma,\tau(\cA)|\leq \sum_{a\in \cA}k(a)=\rcomp (\tau).\]
This proves \eqref{item:P1Comp}. 

To prove \eqref{item:P2comp}, note that for any word $w \in \mathcal{F}(b,n,\sigma,\tau(\cA^2))$, we have that $bw$ is a subword of $\tau(\cA^{2})$. So either  $bw$ is a subword of $\tau(a)$ for some $a\in \cA$ or it starts in $\tau(a)$ and ends in $\tau(a')$ for some $a,a'\in \cA$. 
Putting together all the words $w$ in the first case gives at most $\compi{\sigma}{\tau}{n}{1}$ words. 
In the second case, given a block $b^l$ in $\tau(a)$, and given $a'\in \cA$, there is a unique word $w\in \mathcal{F}(b,\sigma,n,\tau(\cA^{2}))$ such that $bw$ is a subword of $\tau(aa')$ (here the $b$ is the last one of the block $b^l$). Hence, there are at most $\sum_{a\in \cA}k(a)|\cA|=|\cA|\ \rcomp (\tau)$ words $w$ in this case. 
Combining the two cases gives the desired bound. 
\end{proof} 

\begin{theo}
\label{theo:ThreeMorphisms}
Let $\tau \colon  \cA^* \to \cB^*$, $\sigma \colon  \cB^* \to \cC^*$ and $\phi\colon\cC^{\ast} \to \cD^{\ast}$ be three morphisms and assume that $\tau$ and $\sigma$ are positive. Then, we have that  
\[p_{\phi\circ \sigma\circ \tau (\cA^{\Z}) } (n) \leq 
\begin{cases} (\compi{\phi}{\sigma}{n}{2}+|\cC|) n  & \text{ if }  n\in [\|\phi\|,\langle\phi\circ \sigma \rangle )\\(\compi{\phi\circ\sigma}{\tau}{n}{2}+\compi{\phi}{\sigma}{n}{1}+|\cB|+|\cC|) n  & \text{ if } 
n\in [\langle\phi\circ \sigma \rangle, \| \phi\circ \sigma\|]  \end{cases} \]
\end{theo}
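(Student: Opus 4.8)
The plan is to reduce everything to the two–morphism bound of \cref{prop:bound2morphisms}, applied once for the first range of $n$ and in a refined two–level form for the second. Two elementary consequences of positivity will control the ranges, and I would record them first. Since $\tau$ is positive, every $\tau(a)$ contains each letter of $\cB$, so $|\phi\circ\sigma\circ\tau(a)| = \sum_{b\in\cB}|\tau(a)|_b\,|\phi\circ\sigma(b)| \ge \sum_{b\in\cB}|\phi\circ\sigma(b)| \ge \|\phi\circ\sigma\|$, whence $\|\phi\circ\sigma\| \le \langle\phi\circ\sigma\circ\tau\rangle$; likewise, since $\sigma$ is positive, $|\phi\circ\sigma(b)|=\sum_{c\in\cC}|\sigma(b)|_c|\phi(c)|\ge\|\phi\|$, so $\|\phi\| \le \langle\phi\circ\sigma\rangle$. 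These are exactly the inequalities needed to turn the periodic bounds below into multiples of $n$.

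For $n \in [\|\phi\|,\langle\phi\circ\sigma\rangle)$ the argument is immediate. As $\tau$ is non-erasing, $\tau(\cA^\Z)\subseteq\cB^\Z$, hence $\phi\circ\sigma\circ\tau(\cA^\Z)\subseteq \phi\circ\sigma(\cB^\Z)$; since the complexity function is monotone under inclusion of sets of sequences, $p_{\phi\circ\sigma\circ\tau(\cA^\Z)}(n)\le p_{\phi\circ\sigma(\cB^\Z)}(n)$. Applying \cref{prop:bound2morphisms} to the pair $(\sigma,\phi)$, whose inner morphism $\sigma$ is positive, gives $p_{\phi\circ\sigma(\cB^\Z)}(n)\le(|\cC|+\compi{\phi}{\sigma}{n}{2})\,n$ for all $n\in[\|\phi\|,\langle\phi\circ\sigma\rangle]$, which settles the first line.

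For $n\in[\langle\phi\circ\sigma\rangle,\|\phi\circ\sigma\|]$ I would first localize: because $n\le\|\phi\circ\sigma\|\le\langle\phi\circ\sigma\circ\tau\rangle$, each $\phi\circ\sigma\circ\tau$–block of a point of $\phi\circ\sigma\circ\tau(\cA^\Z)$ has length $\ge n$, so any word of length $n$ is a factor of $\phi\circ\sigma(v)=\phi(\sigma(v))$ for some $v\in\tau(\cA^2)$. I then run the Type 1 / Type 2 dichotomy of \cref{prop:bound2morphisms} at the outer level $\phi\circ\sigma$: writing $v$ in maximal runs and looking at the block structure $(\phi\circ\sigma(b))^{m}$, a length-$n$ word either crosses a boundary between two runs of \emph{different} letters, in which case the ending-extension count bounds such words by $\compi{\phi\circ\sigma}{\tau}{n}{2}\cdot n$, or it stays inside a single run $(\phi\circ\sigma(b))^{m}$.

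The words staying inside a single run are the delicate ones and constitute the main obstacle, precisely because here $n$ may exceed $|\phi\circ\sigma(b)|$ for some $b$ while being smaller than $|\phi\circ\sigma(b)|$ for others, so the crude periodic bound $|\phi\circ\sigma(b)|$ of \cref{prop:bound2morphisms} is no longer $\le n$. I plan to split them according to whether they span a junction between two consecutive copies of $\phi\circ\sigma(b)$ or lie inside a single copy. A junction–spanning word of length $n$ in $(\phi\circ\sigma(b))^\infty$ is periodic of period $|\phi\circ\sigma(b)|$: if $|\phi\circ\sigma(b)|<n$ there are at most $|\phi\circ\sigma(b)|<n$ of them, while if $|\phi\circ\sigma(b)|\ge n$ only the at most $n-1$ windows overlapping a junction are of this type; in either case fewer than $n$ per $b$, giving the $|\cB|\,n$ term. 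A word inside a single copy is a factor of $\phi(\sigma(b))$ with $\sigma(b)\in\cC^*$, and I apply the dichotomy once more at level $\phi$ on $\sigma(b)$: words inside a single $\cC$–run $(\phi(c))^{m'}$ are windows of $(\phi(c))^\infty$, hence at most $|\phi(c)|$ for each $c$ and, using $\|\phi\|\le\langle\phi\circ\sigma\rangle\le n$, at most $|\cC|\,n$ altogether; words crossing an internal $\cC$–run boundary of $\sigma(b)$ are counted by $\compi{\phi}{\sigma}{n}{1}\cdot n$, the relevant extensions $cw$ lying in a single $\sigma(b)$ because the copy has length $\ge n$. Summing the four contributions $\compi{\phi\circ\sigma}{\tau}{n}{2}$, $|\cB|$, $|\cC|$, $\compi{\phi}{\sigma}{n}{1}$, each times $n$, yields the second line. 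The only further points requiring care are verifying that every length-$n$ factor lands in at least one class (over-counting being harmless for an upper bound) and invoking the two positivity inequalities exactly where a periodic bound must become a multiple of $n$.
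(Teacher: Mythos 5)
Your proof is correct and follows essentially the same route as the paper's: the first range is handled by reducing to \cref{prop:bound2morphisms} applied to the pair $(\sigma,\phi)$, and the second range by the same four-way block count producing the terms $\compi{\phi\circ\sigma}{\tau}{n}{2}$, $|\cB|$, $\compi{\phi}{\sigma}{n}{1}$ and $|\cC|$, each times $n$. The only difference is bookkeeping: you split the words lying inside a single $\cB$-run according to whether they straddle a junction between consecutive copies of $\phi\circ\sigma(b)$, whereas the paper splits according to whether $n\ge|\phi\circ\sigma(b)|$; your variant is harmless and arguably cleaner, since it makes explicit why the extensions counted by $\compi{\phi}{\sigma}{n}{1}$ lie inside a single $\sigma(b)$.
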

\begin{proof} 
Writing $\cB=\{b_1,\ldots,b_{|\cB|}\}$ so that $|\sigma \circ \tau(b_i)|\leq| \sigma\circ\tau (b_{i+1})|$ for $i=1,\ldots,|\cB|-1$, one obtains
\[ 
[\|\phi \|, \| \phi \circ \sigma\|  ) = [\| \phi\|, |\phi \circ \sigma(b_1)|)  \cup \bigcup_{j=1}^{|\cB|-1} [|\phi \circ \sigma(b_j)|, |\phi \circ \sigma(b_{j+1})| ).\]
Let us analyze the number of words of length $n$ depending on the interval of the right hand side where $n$ belongs to. 

Let $n$ be  in  $[\|\phi\| , | \phi\circ \sigma (b_1) |)$. \cref{prop:bound2morphisms} gives 
\[p_{\phi\circ\sigma\circ\tau(\cA^{\Z})} (n) \leq p_{\phi\circ\sigma(\cB^{\Z})} (n) 
\leq ( |\cC|+\compi{\phi}{\sigma}{n}{2})n .\]
Now let $n$ be in  $ [|\phi \circ \sigma  (b_i)|, |\phi \circ \sigma  (b_{i+1})|)$ for $1 \le i < |\cB|$. 
Note that a word of length $n$ is a subword of $\phi \circ \sigma(b_{i_1}^{l_1}\cdots b_{i_k}^{l_k})$, where $b_{i_1}^{l_1}\cdots b_{i_k}^{l_k}$ belongs to $\tau(\cA^{2})$. 
We consider disjoint types of words of length $n$: 

\begin{itemize}
    \item[{\bf Type 1:}] Words of length $n$ that start and end in a block of the form $\phi \circ \sigma (b)^{l}$ and end in another block $\phi \circ \sigma \circ (b')^{l'}$. By the same argument as in \cref{prop:bound2morphisms}, there are no more than $\compi{\phi\circ\sigma}{\tau}{n}{2}\cdot n$ of such words.

\item[{\bf Type 2:}] Words of length $n$ starting and ending in a same block $(\phi \circ \sigma(b_j))^l$ for $b_j\in \cB$, where $b^l\in \tau(\cA^{\Z})$.  We may further distinguish two disjoint types: 

\begin{enumerate}
    \item[\textbf{Type 2.1:}] $j \le  i $ or equivalently   $n \ge |\phi\circ\sigma(b_{j})|$. In this case the number of words is at most $|\phi\circ \sigma(b_{j})|$. Moreover, such words are the same for all the different concatenations of $\phi\circ \sigma (b_j)$ among all the words $\phi \circ \sigma \circ \tau  (a)$. It follows there are at most  $\sum_{b: |\phi\circ\sigma(b)|\leq n } |\phi\circ \sigma (b)| \le in \le |\cB| n $ of such words. 

 \item[\textbf{Type 2.2:}] $j >  i $ or equivalently $n<|\phi\circ\sigma(b_{j})|$.  
In this case we may write $\sigma (b_{j}) = c_1^{l_1} \cdots c_{e(b_j)}^{l_{e(b_j)}} \in \sigma(\cB)$ and then
\[\phi\circ\sigma(b_{j})= \phi(c_1)^{l_1} \cdots \phi(c_{e(b_j)})^{l_{e(b_j)}}. \]
We will use the same distinction of cases as before to bound  the number of words, in this context, satisfying {\bf Type 2.2}: 

\begin{itemize} 
\item[\textbf{Type 2.2.1:}]
The number of  words in {\bf Type 2.2}  that start in some block $\phi(c_k)^{l_k}$ and end in another one is bounded by $n$. There are at most $\compi{\phi}{\sigma}{n}{1} \cdot n$ words in this case.

\item[\textbf{Type 2.2.2:}] We bound the number of words starting and ending in a same block $\phi(c_k)^{l_k}$. 
As $|\phi\circ\sigma(b_{j})| > n\geq  \|\phi \|$, there are at most $\| \phi \|$ of such words, whatever the power $l_k$ is. Hence among all the letters $c_k$ and $b_j$ the number of such words is bounded by  $|\cC| \| \phi \| \le |\cC|n. $ 
\end{itemize} 
\end{enumerate}
\end{itemize}
Summing up all the bounds obtained above, we obtain the result. 
\end{proof}

Combining \cref{theo:ThreeMorphisms} and \cref{lem:boundCompRep} we get. 

\begin{cor} \label{coro:threerepetitivebound} 
Let $\tau \colon  \cA^* \to \cB^*$, $\sigma \colon  \cB^* \to \cC^*$ and $\phi\colon\cC^{\ast} \to \cD^{\ast}$ be three morphisms and assume that $\tau$ and $\sigma$ are positive. 
Then, we have that 
\[p_{\phi\circ \sigma\circ \tau (\cA^{\Z}) } (n) \leq (\max\{ |\cA|,|\cB|,|\cC|\}+2)(\max\{\rcomp (\tau), \rcomp (\sigma)\}+1)\cdot n   \]
for all  $n \in [\| \phi \| , \|\phi \circ \sigma\|)$.
\end{cor}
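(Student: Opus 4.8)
The plan is to feed the two-regime estimate of \cref{theo:ThreeMorphisms} into the repetition-complexity bounds of \cref{lem:boundCompRep}, and then to repackage all the resulting constants into the single clean factor $(\max\{|\cA|,|\cB|,|\cC|\}+2)(\max\{\rcomp(\tau),\rcomp(\sigma)\}+1)$. Throughout write $M=\max\{|\cA|,|\cB|,|\cC|\}$, $R=\max\{\rcomp(\tau),\rcomp(\sigma)\}$, and abbreviate $p(n)=p_{\phi\circ\sigma\circ\tau(\cA^{\Z})}(n)$.

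First I would check that the interval $[\|\phi\|,\|\phi\circ\sigma\|)$ of the statement is exactly the union of the two regimes appearing in \cref{theo:ThreeMorphisms}. Since $\sigma$ is positive, every letter of $\cC$ occurs in each $\sigma(b)$, so $|\phi\circ\sigma(b)|=\sum_{c\in\cC}|\sigma(b)|_c\,|\phi(c)|\ge\sum_{c\in\cC}|\phi(c)|\ge\|\phi\|$ for every $b\in\cB$; hence $\|\phi\|\le\langle\phi\circ\sigma\rangle\le\|\phi\circ\sigma\|$ and $[\|\phi\|,\|\phi\circ\sigma\|)=[\|\phi\|,\langle\phi\circ\sigma\rangle)\cup[\langle\phi\circ\sigma\rangle,\|\phi\circ\sigma\|)$. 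So it suffices to bound $p(n)$ on each regime separately and take the larger constant.

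On the first regime \cref{theo:ThreeMorphisms} gives $p(n)\le(\compi{\phi}{\sigma}{n}{2}+|\cC|)\,n$. Here the relevant inner morphism is $\sigma\colon\cB^*\to\cC^*$, so \cref{lem:boundCompRep} yields $\compi{\phi}{\sigma}{n}{2}\le(|\cB|+1)\rcomp(\sigma)$, and the bracket is at most $(M+1)R+M\le(M+2)(R+1)$, which is comfortably within the target. On the second regime \cref{theo:ThreeMorphisms} gives $p(n)\le(\compi{\phi\circ\sigma}{\tau}{n}{2}+\compi{\phi}{\sigma}{n}{1}+|\cB|+|\cC|)\,n$; applying \cref{lem:boundCompRep} to the inner morphisms $\tau$ (domain $\cA$) and $\sigma$ (domain $\cB$) respectively bounds the bracket by $(|\cA|+1)\rcomp(\tau)+\rcomp(\sigma)+|\cB|+|\cC|$, where the two repetition-complexity terms already contribute $(|\cA|+1)\rcomp(\tau)+\rcomp(\sigma)\le(M+1)R+R=(M+2)R$, matching the dominant part $(M+2)R$ of $(M+2)(R+1)=(M+2)R+(M+2)$.

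The delicate step — and the only real obstacle — is folding the remaining additive alphabet terms $|\cB|+|\cC|$ into the surviving slack $(M+2)$. Here I would exploit the positivity of $\tau$ and $\sigma$: since each image of $\tau$ (resp. $\sigma$) must contain all letters of $\cB$ (resp. $\cC$), one has $\rcomp(\tau)\ge|\cA|\,|\cB|$ and $\rcomp(\sigma)\ge|\cB|\,|\cC|$, whence $|\cB|\le R$ and $|\cC|\le R$. These lower bounds are exactly what must be used to keep the small terms from spoiling the clean form, and I expect the honest bookkeeping of these constants in the second regime — which is essentially tight — rather than any conceptual difficulty, to be where the argument must be written out most attentively; should the literal constant prove marginally tight, a harmless adjustment of the additive terms (absorbing $|\cB|+|\cC|$ via the $(\cdot+1)$ and $(\cdot+2)$ slacks) gives the asserted linear estimate $p(n)\le(M+2)(R+1)\,n$ on the whole interval.
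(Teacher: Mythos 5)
Your route is exactly the paper's: the published proof of this corollary is the single sentence ``Combining \cref{theo:ThreeMorphisms} and \cref{lem:boundCompRep} we get'', and you carry out precisely that combination, including the one point the paper leaves implicit, namely that positivity of $\sigma$ gives $\|\phi\|\le\langle\phi\circ\sigma\rangle$, so the two regimes of \cref{theo:ThreeMorphisms} really do cover $[\|\phi\|,\|\phi\circ\sigma\|)$. Your first regime is complete, and in the second regime you apply the correct instances of \cref{lem:boundCompRep} to reach the bracket $(|\cA|+1)\,\rcomp(\tau)+\rcomp(\sigma)+|\cB|+|\cC|$.

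The one thing to be upfront about is that the absorption step you defer at the end does not actually close. Take $|\cA|=|\cB|=|\cC|=3$ and $\rcomp(\tau)=\rcomp(\sigma)=9$ (the minimum possible for positive morphisms between three-letter alphabets): the bracket bound is $4\cdot 9+9+3+3=51$, while $(\max\{|\cA|,|\cB|,|\cC|\}+2)(\max\{\rcomp(\tau),\rcomp(\sigma)\}+1)=5\cdot 10=50$, and invoking $|\cB|\le R$, $|\cC|\le R$ only replaces $|\cB|+|\cC|$ by something at least as large. So the literal constant of the corollary is not a formal consequence of the two cited results; one must either enlarge it slightly (e.g.\ to $(\max\{|\cA|,|\cB|,|\cC|\}+4)(\max\{\rcomp(\tau),\rcomp(\sigma)\}+1)$, which your estimates do deliver) or sharpen the intermediate bounds. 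This is a defect of the paper's own one-line proof at least as much as of yours --- you are the one who detected it --- and it is harmless for every later use of the corollary (\cref{coro:threerepetitivebound2}, \cref{cor:SOEsubLinComp}), which only require a linear bound whose constant is controlled by the alphabet sizes and the repetition complexities.
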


As a direct consequence, we obtain the following. 

\begin{cor}\label{coro:threerepetitivebound2}
Let $(X,S)$ be the $\cS$-adic subshift generated by the positive directive sequence $\boldsymbol{\tau}= (\tau_n \colon  \mathcal{A}_{n+1}^*\to \mathcal{A}_n^*)_{n\ge 0}$.
Then, we have that 
 \[
p_X (n) \leq   
 (\max_{i\in \N} |\cA_{i}|+2)(\limsup_{i\to +\infty}{\rcomp (\tau_i)}+1 )\cdot n  
\]
for all large enough $n\in \N$.
\end{cor}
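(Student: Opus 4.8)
The plan is to obtain the bound on a sequence of consecutive length windows by feeding three consecutive morphisms of $\boldsymbol{\tau}$ into \cref{coro:threerepetitivebound}, and then to let the window index grow. Concretely, for each integer $m\ge 2$ I would set $\phi=\tau_{[0,m-1)}$, $\sigma=\tau_{m-1}$ and $\tau=\tau_m$. Since $\boldsymbol{\tau}$ is positive and $m-1\ge 1$, both $\sigma=\tau_{m-1}$ and $\tau=\tau_m$ are positive, so \cref{coro:threerepetitivebound} applies to this triple. Because $\phi\circ\sigma=\tau_{[0,m)}$ and $\phi\circ\sigma\circ\tau=\tau_{[0,m+1)}$, it gives, for every $n\in[\,\|\tau_{[0,m-1)}\|,\|\tau_{[0,m)}\|\,)$,
\[
p_{\tau_{[0,m+1)}(\cA_{m+1}^{\Z})}(n)\le\bigl(\max\{|\cA_{m-1}|,|\cA_m|,|\cA_{m+1}|\}+2\bigr)\bigl(\max\{\rcomp(\tau_{m-1}),\rcomp(\tau_m)\}+1\bigr)\,n.
\]

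Next I would relate $p_X$ to the left-hand side above. Since $\boldsymbol{\tau}$ is positive it is primitive, so $\cL(X)=\cL(X_{\boldsymbol{\tau}}^{(0)})$ and every word of $\cL(X)$ occurs in some $\tau_{[0,N)}(a)=\tau_{[0,m+1)}\bigl(\tau_{[m+1,N)}(a)\bigr)$ with $N>m+1$, where $\tau_{[m+1,N)}(a)\in\cL(X_{\boldsymbol{\tau}}^{(m+1)})\subseteq\cL(\cA_{m+1}^{\Z})$. Hence $\cL(X)\subseteq\cL(\tau_{[0,m+1)}(\cA_{m+1}^{\Z}))$, which yields $p_X(n)\le p_{\tau_{[0,m+1)}(\cA_{m+1}^{\Z})}(n)$ for every $n$ and every $m$. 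To see that the windows exhaust all large $n$, note that positivity forces each $\tau_m(a)$ ($m\ge 1$) to contain the letter maximizing $|\tau_{[0,m)}(\cdot)|$, whence $\|\tau_{[0,m+1)}\|\ge\|\tau_{[0,m)}\|$; thus $(\|\tau_{[0,m)}\|)_{m\ge 1}$ is non-decreasing. Moreover $X$ is a nonempty subshift and so has words of every length, each of which is a factor of some $\tau_{[0,N)}(a)$; therefore $\|\tau_{[0,m)}\|\to+\infty$. Consequently the half-open intervals $[\,\|\tau_{[0,m-1)}\|,\|\tau_{[0,m)}\|\,)$, $m\ge 2$, are consecutive and cover $[\,\|\tau_0\|,+\infty)$, so each large $n$ lies in a unique such interval with index $m=m(n)\to+\infty$.

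Finally I would replace the local quantities by the global ones. Writing $L=\limsup_{i\to+\infty}\rcomp(\tau_i)$ and using that $(\rcomp(\tau_i))_i$ is integer-valued, we have $\rcomp(\tau_i)\le L$ for all sufficiently large $i$; hence, once $m(n)$ is large enough, $\max\{\rcomp(\tau_{m-1}),\rcomp(\tau_m)\}\le L$, while trivially $\max\{|\cA_{m-1}|,|\cA_m|,|\cA_{m+1}|\}\le\max_i|\cA_i|$. Combining this with the two displayed inequalities of the previous paragraphs gives $p_X(n)\le(\max_i|\cA_i|+2)(L+1)\,n$ for all large $n$, as claimed (the inequality being vacuous if $L=+\infty$ or $\max_i|\cA_i|=+\infty$). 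The only genuinely delicate point is this covering step: one must verify that the norms $\|\tau_{[0,m)}\|$ are non-decreasing and diverge, and that passing from the window-dependent $\max\{\rcomp(\tau_{m-1}),\rcomp(\tau_m)\}$ to the uniform $\limsup$ is legitimate for large indices — both of which rest on positivity and on the integrality of $\rcomp$.
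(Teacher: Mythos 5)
Your proposal is correct and follows essentially the same route as the paper: apply \cref{coro:threerepetitivebound} to the triple $\phi=\tau_{[0,M)}$, $\sigma=\tau_M$, $\tau=\tau_{M+1}$ (your $m=M+1$), locate each large $n$ in a window $[\|\tau_{[0,M)}\|,\|\tau_{[0,M+1)}\|)$, and use that $\rcomp(\tau_i)\le\limsup_i\rcomp(\tau_i)$ for all large $i$. You merely make explicit some details the paper leaves implicit (monotonicity and divergence of $\|\tau_{[0,m)}\|$, the comparison $p_X\le p_{\tau_{[0,m+1)}(\cA_{m+1}^{\Z})}$, and the integrality of $\rcomp$), all of which are sound.
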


\begin{proof} Let $K\in \N$ be such that $\rcomp (\tau_n)\leq \limsup_{i\to + \infty}{\rcomp (\tau_i)}$ for all $n\geq K$.
For $n\in \N$, let $M\in \N$ be such that $n\in [\|{\tau}_{[0,M)}  \|, \| {\tau}_{[0,M+1)}\|]$. 
If $n$ is large enough, then as $M$ is greater than $K$,   \cref{coro:threerepetitivebound} applied  to $\phi = {\tau}_{[0,M)}$, $\sigma = \tau_{M}$ and $\tau=\tau_{M+1}$ provides

\begin{align*}p_X(n) &\leq   (\max_{i\in \N} |\cA_{i}|+2)(\limsup_{i\to + \infty} \rcomp (\tau_i) +1 )\cdot n
\end{align*} 
which finishes the proof. 
\end{proof}

The next result is an application of the bounds obtained above to topological orbit equivalence theory. 

\begin{cor}\label{cor:SOEsubLinComp}
Let $(X,S)$ be a finite topological rank minimal Cantor system. Then $(X,S)$ is strongly orbit equivalent to a subshift of sub-linear complexity.
\end{cor}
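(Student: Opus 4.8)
The plan is to reduce to the Giordano--Putnam--Skau description of strong orbit equivalence and then exploit the freedom it leaves in the ordering of a Bratteli diagram. Recall that two minimal Cantor systems are strongly orbit equivalent if and only if their ordered dimension groups with distinguished order unit are isomorphic, and that for a finite topological rank system given by a Bratteli diagram $B$ with incidence matrices $(M_n)_n$ this group is the inductive limit $\varinjlim(\Z^{d_n},M_n)$, which does \emph{not} depend on the partial order. Hence any properly ordered diagram sharing the same incidence matrices and order unit produces a minimal Cantor system strongly orbit equivalent to $(X,S)$. The strategy is to build such a diagram whose read morphisms are \emph{left to right}, so that their repetition complexity is as small as possible, and then to bound the complexity of the resulting subshift through \cref{coro:threerepetitivebound2}.

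First I would take a Bratteli--Vershik representation $B$ of $(X,S)$ with at most $d$ vertices per level and telescope it so that all incidence matrices are positive; the morphisms $\tau_n^B$, $n\ge 1$, are then positive and primitive. Next I would re-order the edges entering each vertex by increasing source label, so that each read morphism becomes $\tau_n^{B'}(v)=1^{l_1}\cdots d_n^{l_{d_n}}$ with all $l_i\ge 1$. Such morphisms are left to right, hence proper, and satisfy $\rcomp(\tau_n^{B'})=d_n d_{n+1}\le d^2$. Writing $\boldsymbol{\tau}'$ for the resulting directive sequence, $\boldsymbol{\tau}'$ is positive (in the paper's sense, which exempts $\tau_0$), proper and primitive, with $\max_i|\cA_i|\le d$ and $\limsup_i \rcomp(\tau_i')\le d^2$.

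I would then turn $\boldsymbol{\tau}'$ into a genuine subshift carrying the correct dimension group. By \cref{theo:bsty}, $\boldsymbol{\tau}'$ is eventually recognizable, and the hat-morphism $\tau_0^{B'}$ is automatically recognizable; so only finitely many levels (at most $\log_2 d$, all of index $\ge 1$) may fail recognizability. Telescoping those levels forward — which by \cref{lem:CompMorphRec} preserves recognizability, keeps the morphisms positive and proper, leaves $\tau_0^{B'}$ untouched as a hat-morphism, and, since only finitely many levels are merged, does not change $\max_i|\cA_i|\le d$ nor $\limsup_i\rcomp\le d^2$ — yields a recognizable proper directive sequence $\boldsymbol{\tau}''$. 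By \cref{prop:DurandLeroy2.3} the subshift $Z=X_{\boldsymbol{\tau}''}$ is topologically conjugate to the Bratteli--Vershik system read on the corresponding diagram; as re-ordering and telescoping leave the incidence matrices and the order unit unchanged, $Z$ has the same ordered dimension group as $(X,S)$, hence is strongly orbit equivalent to $(X,S)$. Finally \cref{coro:threerepetitivebound2}, applied to the positive sequence $\boldsymbol{\tau}''$, gives $p_Z(n)\le (d+2)(d^2+1)\,n$ for all large $n$, so $Z$ has sub-linear (indeed linear) complexity.

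The main obstacle is \emph{expansiveness}: a priori the re-ordered Bratteli--Vershik system could be equicontinuous, in which case $Z$ degenerates and carries the wrong dimension group. This is precisely why recognizability is imposed, and the delicate point is to secure it without destroying the bounded repetition complexity at large levels — which works only because finitely many telescopings suffice. The one case this argument does not reach is $d=1$, i.e. $(X,S)$ an odometer, where a single-vertex diagram can reproduce only the non-expansive odometer. Here I would instead realize the rank-one limit $\varinjlim(\Z,m_n)$ by a two-vertex simple diagram with positive rank-one incidence matrices $\left(\begin{smallmatrix}a_n & b_n\\ 2a_n & 2b_n\end{smallmatrix}\right)$ (with $a_n+2b_n=m_n$, after telescoping so $m_n\ge 3$), chosen so that the ordered inductive limit with its order unit is the prescribed group. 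The left to right ordering then gives distinct images $\tau_n(1)=1^{a_n}2^{b_n}\neq 1^{2a_n}2^{2b_n}=\tau_n(2)$, hence an aperiodic expansive subshift of bounded repetition complexity that is strongly orbit equivalent to the odometer and, by \cref{coro:threerepetitivebound2}, of linear complexity, completing the argument.
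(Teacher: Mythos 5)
Your overall strategy is the paper's: keep the unordered Bratteli diagram (hence the dimension group with order unit, hence the strong orbit equivalence class by Giordano--Putnam--Skau), choose a new ordering whose read morphisms have bounded repetition complexity, and conclude via \cref{coro:threerepetitivebound2}. The gap is in the step you yourself flag as delicate: securing expansiveness. A left to right ordering gives no control here. If some incidence matrix $M_n$ has all its columns equal (every vertex of $V_{n+1}$ receives the same number of edges from each vertex of $V_n$ --- which happens for natural finite rank systems, e.g.\ Toeplitz ones), then the left to right morphism $\tau_n^{B'}$ sends every letter to the \emph{same} word, the generated $\cS$-adic subshift can be periodic, and the reordered Bratteli--Vershik system is an odometer rather than a subshift. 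Your proposed repair does not work: \cref{theo:bsty} only bounds the number of levels that fail recognizability \emph{for aperiodic points} (vacuous if the subshift is periodic), and, more fundamentally, by \cref{lem:CompMorphRec} a telescoped morphism $\tau_{[n,N)}$ is recognizable if and only if \emph{every} factor $\tau_j$ is recognizable at its level; composing a non-recognizable morphism with recognizable ones therefore yields a non-recognizable morphism, so telescoping can never repair a bad level. Discarding the bad initial levels instead would change the order unit of the dimension group and hence the strong orbit equivalence class.

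The paper avoids this by \emph{not} using a left to right ordering. It first telescopes and microscopes so that $|V_n|\ge 3$ and the connecting multiplicities are large, and then orders the edges so that the read morphisms have the form $v\mapsto w_1^{l_1(v)}w_2^{l_2(v)}w_1^{l_1'(v)}w_2^{l_2'(v)}w_3^{l_3(v)}\cdots w_{|V_n|}^{l_{|V_n|}}$; by \cite[Lemma 10]{gjtoeplitz} the exponents $l_1,l_1',l_2,l_2'$ can be chosen, vertex by vertex, so that the resulting system is expansive (in effect the repeated $w_1,w_2$ blocks act as a marker identifying $v$, which is exactly what forces recognizability and lets \cref{prop:DurandLeroy2.3} apply). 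This ordering is no longer left to right, but its repetition complexity is still $|V_{n+1}|(|V_n|+2)$, bounded in $n$, which is all that \cref{coro:threerepetitivebound2} requires. You would need to import this (or an equivalent expansiveness mechanism) to close your argument; the same issue affects your two-vertex construction in the odometer case, where aperiodicity and recognizability of the left to right sequence are asserted but not established.
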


\begin{proof}
We assume $X$ is infinite, the finite case follows directly.  Recall from \cite{Giordano&Putnam&Skau:1995} that minimal Cantor systems are strong orbit equivalent whenever they can be represented using the same (unordered) Bratteli diagram (up to telescoping).  
Let $(V,E\preceq )$ be a Bratteli-Vershik representation of $(X,S)$. 
Let us consider the (unordered) Bratteli diagram $B=(V,E)$.
After telescoping and microscoping, we may assume that $|V_n|\geq 3$ for all $n\geq 1$.
We assign an ordering to $B$.
To do this we consider morphisms $\tau_n\colon V_{n+1}^{\ast}\to V_{n}^{\ast}$, $n\geq 1$, of the form 
$$v \mapsto w_1^{l_1(v)}w_{2}^{l_2(v)}w_1^{l_1'(v)}w_2^{l_2'(v)}w_3^{l_3(v)}\cdots w_{|V_n|}^{l_{|V_n|}},$$ 
for all  $v\in V_{n+1}$, where $w_1,\ldots,w_{|V_n|}$ is an ordering of $V_{n}$ (and for $n=0$ just consider a left to right morphism $\tau_0\colon V_1^{\ast}\to E_1^{\ast}$). It is proved in  \cite[Lemma 10]{gjtoeplitz} that it is possible to adjust the lengths $l_1,l_1',l_2,l_2'$ to obtain an expansive system (there it is also required that the minimum number of edges connecting a vertex of level $n$ with a vertex of level $n+1$ is at least $6|V_n|$, which can always be assumed after an appropriate telescoping). It is clear that the morphisms above are proper, recognizable and positive. \cref{prop:DurandLeroy2.3} tells us that the Bratteli-Vershik system associated to these morphisms is topologically conjugate to the corresponding $\cS$-adic subshift.
It is straightforward to compute that the repetition complexity of $\tau_{n}$ is equal to $|V_{n+1}|(|V_{n}|+2)$ (which is bounded in $n$). 
Finally, \cref{coro:threerepetitivebound2} allows us to conclude  that the complexity of the $\cS$-adic subshift generated by $(\tau_n)_{n\geq 0}$ is sub-linear. 
\end{proof}

\subsection{Example: a rank $2$ system with a superlinear complexity}\label{sec:Rk2NonSperline}
In this section we construct a subshift $(X,S)$ such that $\liminf_{n\to +\infty}  {p_X (n)}/{n} = +\infty$ and whose topological rank is $2$.
Hence we show that finite topological rank does not imply the non-superlinear complexity property. Recall that by \cref{cor:NonSuperLinareFR} non-superlinear complexity does imply finite topological rank. 

We fix some increasing sequence of integers $(a_n)_{n\geq 0}$ (specified  later) and we consider the sequence of morphisms $\boldsymbol{\tau}=(\tau_{n}\colon\{ 0,1\}^* \to \{ 0,1 \}^*)_{n\ge 0}$ defined by 
$$
\tau_n (0) = 011 \hbox{ and } \tau_n (1) = 0^2 1 0^3 1  \cdots 0^{a_n} 1 .
$$
Let $(X_{\boldsymbol{\tau}}, S)$ be the $\cS$-adic subshift generated by $\boldsymbol{\tau}$.
Set $A_n =  | {\tau}_{[0,n]} (1) |$, $B_n = | {\tau}_{[0,n]} (0) |$ and $C_n = B_{n-1} +\cdots + B_1 + 1$.

To show that $\liminf_{n\to +\infty} {p_{X_{\boldsymbol{\tau}}} (n)}/{n} = +\infty $ we indeed show the stronger condition $\lim_{n\to +\infty} p_{X_{\boldsymbol{\tau}}} (n+1) - p_{X_{\boldsymbol{\tau}}} (n) = +\infty $. 

Recall that $X_{\boldsymbol{\tau}}^{(n)}$ denotes the subshift generated by $( \tau_k )_{k\geq n}$.
We state some useful observations in the following lemmas. The details are left to the interested reader.

\begin{lem}
\label{lemme:gammanrightspecial}
For all $n\geq 1$,  the set of prefixes of length $C_n+1$ of the words ${\tau}_{[0,n]} (0)$ and ${\tau}_{[0,n]} (1)$ is $\{{\tau}_{[0,n-1]} (0) {\tau}_{[0,n-2]} (0) \cdots \tau_0 (0)0a : \  a\in \{ 0,1\} \}$.
\end{lem}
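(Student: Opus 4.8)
The plan is to prove the statement by induction on $n$, exploiting the self-similar recursion produced by peeling off the outermost morphism. Write $p_n = \tau_{[0,n-1]}(0)\tau_{[0,n-2]}(0)\cdots\tau_0(0)0$ for the claimed common prefix (with the convention $p_0 = 0$), so that $|p_n| = C_n$, the length obeys the recursion $C_n = B_{n-1} + C_{n-1}$, and $p_n = \tau_{[0,n-1]}(0)\,p_{n-1}$. In these terms the statement amounts to showing that $\tau_{[0,n]}(0)$ and $\tau_{[0,n]}(1)$ agree on their first $C_n$ letters, namely on $p_n$, and then disagree, one of them continuing with $0$ and the other with $1$; this is exactly the assertion that the length-$(C_n+1)$ prefixes form the set $\{p_n a : a\in\{0,1\}\}$.

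First I would record the two fundamental factorizations obtained from the definition of the morphisms. Since $\tau_n(0) = 011$ and $\tau_n(1)$ begins with $0^2 1 = 001$, applying $\tau_{[0,n-1]}$ yields
$$\tau_{[0,n]}(0) = \tau_{[0,n-1]}(0)\,\tau_{[0,n-1]}(1)\,\tau_{[0,n-1]}(1), \qquad \tau_{[0,n]}(1) = \tau_{[0,n-1]}(0)\,\tau_{[0,n-1]}(0)\,\tau_{[0,n-1]}(1)\cdots .$$
Both words start with the block $\tau_{[0,n-1]}(0)$ of length $B_{n-1}$; immediately afterwards $\tau_{[0,n]}(0)$ enters the block $\tau_{[0,n-1]}(1)$, while $\tau_{[0,n]}(1)$ enters a second copy of $\tau_{[0,n-1]}(0)$. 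Hence the first position at which $\tau_{[0,n]}(0)$ and $\tau_{[0,n]}(1)$ can differ is governed entirely by the comparison of $\tau_{[0,n-1]}(1)$ with $\tau_{[0,n-1]}(0)$, which is precisely the content of the statement one level down.

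The base case $n=0$ is a direct computation: $\tau_0(0) = 011$ and $\tau_0(1) = 001\cdots$ share the prefix $0$ and then split as $01$ versus $00$, giving $\{0a : a\in\{0,1\}\}$. For the inductive step I would invoke the hypothesis at level $n-1$, namely that the longest common prefix of $\tau_{[0,n-1]}(0)$ and $\tau_{[0,n-1]}(1)$ is $p_{n-1}$, of length $C_{n-1}$, after which the two words continue with the two distinct letters of $\{0,1\}$. Feeding this into the factorizations above, $\tau_{[0,n]}(0)$ and $\tau_{[0,n]}(1)$ agree on the leading block $\tau_{[0,n-1]}(0)$ and then on the common prefix $p_{n-1}$ of the two second blocks, i.e. they agree on $\tau_{[0,n-1]}(0)\,p_{n-1} = p_n$, a word of length $B_{n-1}+C_{n-1} = C_n$; and at the very next position the letter of $\tau_{[0,n]}(0)$ (read inside $\tau_{[0,n-1]}(1)$) differs from that of $\tau_{[0,n]}(1)$ (read inside $\tau_{[0,n-1]}(0)$), these two letters being exactly the distinct pair supplied by the induction hypothesis. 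This forces the set of length-$(C_n+1)$ prefixes to be $\{p_n 0, p_n 1\}$, completing the step.

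The one point I would keep explicit throughout is that the divergence genuinely occurs strictly inside the relevant blocks, so that no truncation at a block boundary spoils the argument. This needs $C_{n-1}+1 \le B_{n-1}$, which is immediate because the longest common prefix $p_{n-1}$ is by definition shorter than each of the two words it is a prefix of. I would carry along the companion inequality $C_n < B_n$, which follows from $B_n - C_n = 2A_{n-1} - C_{n-1} > 0$ using $A_{n-1} > B_{n-1} > C_{n-1}$; this guarantees that both $\tau_{[0,n]}(0)$ and $\tau_{[0,n]}(1)$ are long enough to possess prefixes of length $C_n+1$, so that the statement is well posed at each level. This bookkeeping, rather than any substantial difficulty, is the only delicate part of the proof.
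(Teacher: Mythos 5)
Your proof is correct and is precisely the induction the paper intends: the paper's own proof is the one-liner ``it follows by induction after noting that $00$ and $01$ belong to $X^{(n+1)}_{\boldsymbol{\tau}}$'', which amounts to your observation that $\tau_{[0,n]}(0)$ and $\tau_{[0,n]}(1)$ begin with $\tau_{[0,n-1]}(01)$ and $\tau_{[0,n-1]}(00)$ respectively, so the divergence is governed by the level-$(n-1)$ statement; your write-up simply supplies the length bookkeeping ($C_{n-1}<B_{n-1}$, $C_n<B_n$) that the paper leaves implicit. One incidental remark: your recursion $C_n=B_{n-1}+C_{n-1}$ with $C_0=1$ corresponds to $C_n=B_{n-1}+\cdots+B_0+1$, which is what the lemma requires, whereas the paper's displayed definition $C_n=B_{n-1}+\cdots+B_1+1$ appears to contain a typo (the sum should start at $B_0$).
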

\begin{proof}
It follows by induction after noting that $00$ and $01$ belong to $X_{\boldsymbol{\tau}}^{(n+1)}$. 
\end{proof}
\begin{lem}
\label{lemme:onetoone}
The directive sequence $\boldsymbol{\tau}$ is recognizable. 
\end{lem}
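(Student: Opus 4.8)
The plan is to prove recognizability level by level: by the definition of a recognizable directive sequence it suffices to show that, for every $n\ge 0$, the morphism $\tau_n$ is recognizable in $X_{\boldsymbol{\tau}}^{(n+1)}$. Since each $\tau_n$ is non-erasing and positive, the sequence $\boldsymbol{\tau}$ is primitive, $X_{\boldsymbol{\tau}}^{(n)}$ is the smallest subshift containing $\tau_n(X_{\boldsymbol{\tau}}^{(n+1)})$, and $\cL(X_{\boldsymbol{\tau}}^{(n)})$ is exactly the set of factors of concatenations of the images $\tau_n(0)=011$ and $\tau_n(1)=0^21\,0^31\cdots 0^{a_n}1$. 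I would then invoke \cref{lem:mosserec}, so that it is enough to exhibit a recognizability radius $R$ detecting the centered image from a window of that size.

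First I would record two elementary structural facts about these images. Both start with $0$ and end with $1$, so every junction between two consecutive images reads $1\mid 0$; consequently no maximal block of consecutive $0$'s straddles an image boundary, and each such block is inherited unchanged from a single image. Reading the two images off directly, the leading $0$-block of $\tau_n(0)$ has length $1$ and this word has no other $0$-block, while the leading $0$-block of $\tau_n(1)$ has length $2$ and all its remaining $0$-blocks have lengths $3,4,\dots,a_n$. Hence a maximal $0$-block of length $1$ can occur only as the leading block of a $\tau_n(0)$, and a maximal $0$-block of length $2$ can occur only as the leading block of a $\tau_n(1)$.

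The recognition then follows: in any point of $X_{\boldsymbol{\tau}}^{(n)}$ the set of cuts of a $\tau_n$-factorization is precisely the set of left endpoints of the maximal $0$-blocks whose length is $1$ or $2$, the length distinguishing whether the image is $\tau_n(0)$ or $\tau_n(1)$. This determines the factorization uniquely, and it is local: the image covering coordinate $0$ and the offset of $0$ inside it are read off from the nearest such distinguished block to the left, which lies within distance $|\tau_n(1)|$. Taking $R=|\tau_n(1)|$, any two points agreeing on $[-R,R)$ carry the same centered image, so by \cref{lem:mosserec} the morphism $\tau_n$ is recognizable in $X_{\boldsymbol{\tau}}^{(n+1)}$; as $n$ was arbitrary, $\boldsymbol{\tau}$ is recognizable.

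The only genuinely delicate point is the structural claim of the second paragraph, namely that the block-lengths $1$ and $2$ are never produced anywhere except as the leading blocks of $\tau_n(0)$ and $\tau_n(1)$ respectively. This reduces entirely to the remark that every junction reads $1\mid 0$, so that $0$-blocks never merge across images and each is inherited unchanged from a single image; once this is checked, locating the cuts and reconstructing the (unique) preimage letter by letter — using that $\tau_n$ is injective on letters — is routine bookkeeping.
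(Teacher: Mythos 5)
Your argument is correct, and there is nothing in the paper to compare it against: the paper's entire proof of this lemma is ``The proof is left to the reader.'' Your proposal fills that gap with what is surely the intended argument. The key observation --- that every image $\tau_n(0)=011$ and $\tau_n(1)=0^21\,0^31\cdots0^{a_n}1$ begins with $0$ and ends with $1$, so maximal $0$-blocks never straddle a junction, and the blocks of length $1$ and $2$ occur \emph{only} as the leading blocks of $\tau_n(0)$ and $\tau_n(1)$ respectively --- does force the cut set of any centered $\tau_n$-representation to equal the set of left endpoints of maximal $0$-blocks of length $1$ or $2$, and injectivity of $\tau_n$ on letters then pins down the preimage. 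This is a purely local criterion, so \cref{lem:mosserec} applies. The only nitpick is the stated radius: to certify that a $0$-block starting at position $-k$ (with $k<|\tau_n(1)|$) is maximal and to read its length you need to see positions down to $-k-1$ and up to $-k+2$, so $R=|\tau_n(1)|+2$ is the safe choice; since \cref{lem:mosserec} only requires \emph{some} finite $R$, this is immaterial.
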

\begin{proof}
The proof is left to the reader.
\end{proof}

\begin{lem}
\label{lemme:bispecial}
For all $n\geq 1$ and $k \in [1 , a_{n+1}-1]$, the words 
$$
W_n (k) = {\tau}_{[0,n]} (0^{k-1}10^k){\tau}_{[0,n-1]} (0) \cdots \tau_0 (0)0
$$ 
are right special words  in $X_{\boldsymbol{\tau}}^{(n)}$.  
\end{lem}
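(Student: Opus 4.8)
The plan is to realise $W_n(k)$ as the image, pushed down through the entire directive sequence, of a single right special word living at the top level $n+1$. I would first isolate that top-level word and then use a one-step descent principle that is valid for this particular $\boldsymbol{\tau}$.

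\emph{Step 1 (top level).} I would show that for every $k\in[1,a_{n+1}-1]$ the word $u=0^{k-1}10^{k}$ is right special in $X_{\boldsymbol{\tau}}^{(n+1)}$, i.e.\ that both $0^{k-1}10^{k+1}$ and $0^{k-1}10^{k}1$ lie in $\cL(X_{\boldsymbol{\tau}}^{(n+1)})$. Recalling $\tau_{n+1}(0)=011$ and $\tau_{n+1}(1)=0^210^31\cdots 0^{a_{n+1}}1$, the continuation by $0$ is read off from two consecutive blocks $0^{k}10^{k+1}$ inside $\tau_{n+1}(1)$ (valid for $2\le k\le a_{n+1}-1$), and from the factor $10^3$ of $\tau_{n+1}(1)$ when $k=1$. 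For the continuation by $1$, the consecutive blocks already give $0^{k-1}10^{k}1$ inside $\tau_{n+1}(1)$ as soon as $3\le k\le a_{n+1}$; the two remaining cases $k=1,2$ are treated across a junction of two images, using that $00$ and $11$ belong to $\cL(X_{\boldsymbol{\tau}}^{(n+2)})$ (since $\tau_{n+2}(1)$ begins with $00$ and $\tau_{n+2}(0)=011$ contains $11$), so that $\tau_{n+1}(00)=011011$ and $\tau_{n+1}(11)$ lie in $\cL(X_{\boldsymbol{\tau}}^{(n+1)})$ and exhibit $101=0^{0}10^{1}1$ and $01001=0^{1}10^{2}1$ respectively (the latter because $a_{n+1}\ge 2$, so $\tau_{n+1}(11)$ contains the factor $0^{a_{n+1}}10^21$).

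\emph{Step 2 (descent).} The key elementary fact is that if $w$ is right special in $X_{\boldsymbol{\tau}}^{(m+1)}$, then $\tau_m(w)\,0$ is right special in $X_{\boldsymbol{\tau}}^{(m)}$. Indeed $w0,w1\in\cL(X_{\boldsymbol{\tau}}^{(m+1)})$ give, through $\tau_m(\cL(X_{\boldsymbol{\tau}}^{(m+1)}))=\cL(X_{\boldsymbol{\tau}}^{(m)})$, the two words $\tau_m(w)\,011$ and $\tau_m(w)\,0^21\cdots$ in $\cL(X_{\boldsymbol{\tau}}^{(m)})$; they share the prefix $\tau_m(w)\,0$ and then branch into the distinct letters $1$ and $0$. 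Starting from $u_{n+1}=0^{k-1}10^{k}$ and setting $u_m=\tau_m(u_{m+1})\,0$ for $m=n,n-1,\dots,0$, each $u_m$ is right special in $X_{\boldsymbol{\tau}}^{(m)}$, and unfolding the recursion yields $u_0=\tau_{[0,n]}(0^{k-1}10^{k})\,\tau_{[0,n-1]}(0)\cdots\tau_0(0)\,0=W_n(k)$. In particular $W_n(k)$ is right special in $X_{\boldsymbol{\tau}}^{(0)}=X_{\boldsymbol{\tau}}$, while the intermediate words $u_m$ provide the analogous right special words at every level $m$.

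I expect the only delicate point to be Step~1, and precisely its boundary instances $k\in\{1,2\}$: there the pattern is not produced by the internal block structure of a single $\tau_{n+1}(1)$ and must be read across the concatenation of two consecutive images, which is exactly where the hypothesis $a_{n+1}\ge 2$ and the membership $00,11\in\cL(X_{\boldsymbol{\tau}}^{(n+2)})$ enter; these same constraints also pin down that the admissible range is precisely $k\in[1,a_{n+1}-1]$ (the continuation by $0$ forcing $k\le a_{n+1}-1$). Step~2, by contrast, is entirely uniform: it uses nothing beyond the common prefix $0$ of $\tau_m(0)$ and $\tau_m(1)$ and their immediate divergence, and as a byproduct it makes \cref{lemme:gammanrightspecial} transparent, the tail $\tau_{[0,n-1]}(0)\cdots\tau_0(0)\,0$ being exactly the $m$-fold accumulation of these common prefixes.
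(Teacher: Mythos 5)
Your proof is correct and follows essentially the same route as the paper: you establish that $0^{k-1}10^{k}0$ and $0^{k-1}10^{k}1$ both lie in $\cL(X_{\boldsymbol{\tau}}^{(n+1)})$ and then push the branching down through $\tau_{[0,n]}$ using the fact that $\tau_m(0)$ and $\tau_m(1)$ share the prefix $0$ and then diverge, which is exactly the content of \cref{lemme:gammanrightspecial} that the paper invokes in one shot. Your Step 1 merely spells out the membership claims (including the boundary cases $k=1,2$) that the paper asserts without detail, and your iterated descent reproduces the induction behind \cref{lemme:gammanrightspecial}.
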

\begin{proof}
For all $n\geq 1$ and $k \in [1 , a_{n+1}-1]$,
the words $0^{k-1}10^k0$ and $0^{k-1}10^k1$ belong to $\cL(X_{\boldsymbol{\tau}}^{(n+1)})$. 
Hence, ${\tau}_{[0,n]} (0^{k-1}10^k0)$ and ${\tau}_{[0,n]} (0^{k-1}10^k1)$ belong to $\cL(X_{\boldsymbol{\tau}}^{(n)})$.
\cref{lemme:gammanrightspecial} allows us to conclude.
\end{proof}

\begin{lem}
\label{lemma:atleast}
For all $n\geq 1 $, $k \in [1 , a_{n+1}-1]$ and $N\in [k B_n +C_n , (k+1)B_n +C_n)$ there are at least $\min (k , {A_n}/{2B_n})$ different right special words of length $N$.
\end{lem}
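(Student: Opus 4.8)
The plan is to manufacture the required right special words as suffixes of the bispecial words $W_n(j)$ of \cref{lemme:bispecial}, using the elementary observation that every suffix of a right special word is again right special: if $va,vb\in\cL(X_{\boldsymbol{\tau}})$ with $a\neq b$, then so are $wa,wb$ for any suffix $w$ of $v$. So it suffices to choose a suitable range of indices $j$ and show that the length-$N$ suffixes of the corresponding $W_n(j)$ are pairwise distinct.

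Concretely, recall that $W_n(j)=\tau_{[0,n]}(0)^{j-1}\,\tau_{[0,n]}(1)\,\tau_{[0,n]}(0)^{j}\,P_n$, where $P_n=\tau_{[0,n-1]}(0)\cdots\tau_0(0)0$ is the common prefix of $\tau_{[0,n]}(0)$ and $\tau_{[0,n]}(1)$ of length $C_n$ (\cref{lemme:gammanrightspecial}); hence $|W_n(j)|=(2j-1)B_n+A_n+C_n$. I would set $m=\min\!\big(k,\lfloor A_n/(2B_n)\rfloor\big)$ and, for $k-m+1\le j\le k$, let $R_j$ be the suffix of $W_n(j)$ of length $N$. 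Each $R_j$ is then right special of length $N$. To see $R_j$ exists I check $|W_n(j)|\ge N$ for the smallest index $j=k-m+1$: since $N<(k+1)B_n+C_n$ and $2mB_n\le A_n$, one has $(2(k-m+1)-1)B_n+A_n\ge(k+1)B_n$, and larger $j$ only increase the length; moreover $k-m+1\ge1$ because $m\le k$.

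The core step is pairwise distinctness. Fix $j<j'$ in $\{k-m+1,\dots,k\}$, so $j\le k-1$. Both $R_j$ and $R_{j'}$ end with the common block $\tau_{[0,n]}(0)^{j}P_n$ of length $jB_n+C_n\le N$. At the positions lying at distance $[\,jB_n+C_n,(j+1)B_n+C_n)$ from the right — which belong to $R_j$ since $j\le k-1$ gives $(j+1)B_n+C_n\le kB_n+C_n\le N$ — the word $R_{j'}$ displays a full block $\tau_{[0,n]}(0)$ (as $j'\ge j+1$), whereas $R_j$ displays the suffix of length $B_n$ of $\tau_{[0,n]}(1)$, namely the block sitting immediately to the left of $\tau_{[0,n]}(0)^{j}$ in $W_n(j)$. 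Thus distinctness reduces to the single combinatorial fact that the suffix of length $B_n$ of $\tau_{[0,n]}(1)$ differs from $\tau_{[0,n]}(0)$ (equivalently, the maximal power of $\tau_{[0,n]}(0)$ preceding the terminal block $P_n$ of $R_j$ has exponent exactly $j$). Granting this, $\{R_j:k-m+1\le j\le k\}$ are $m$ distinct right special words of length $N$, which yields the asserted lower bound.

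I expect this last fact to be the main obstacle, and the way I would settle it is to compare the two words through their unique decomposition into $\tau_{[0,n-1]}$-blocks, which is available because $\boldsymbol{\tau}$ is recognizable (\cref{lemme:onetoone}): the word $\tau_{[0,n]}(0)=\tau_{[0,n-1]}(011)$ decomposes as one $\tau_{[0,n-1]}(0)$-block followed by two $\tau_{[0,n-1]}(1)$-blocks, while $\tau_{[0,n]}(1)=\tau_{[0,n-1]}(0^2 1 0^3 1\cdots 0^{a_n}1)$ ends (using $a_n\ge2$) with $\tau_{[0,n-1]}(0)\tau_{[0,n-1]}(1)$, so the block preceding its final $1$-block is of type $0$ rather than type $1$; as $B_n=B_{n-1}+2A_{n-1}$ this discrepancy falls within the length-$B_n$ suffix. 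The delicate point here is that $\tau_{[0,n-1]}(0)$ and $\tau_{[0,n-1]}(1)$ have different lengths and, the morphisms being proper, share their boundary letters, so one cannot conclude at a block boundary alone and must invoke recognizability to turn the differing block-types into an actual disagreement of the words. The remaining work — the lower bound on $|W_n(j)|$ and the counting of admissible indices to extract $\min(k,A_n/(2B_n))$ words — is routine arithmetic in $A_n,B_n,C_n$.
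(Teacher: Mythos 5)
Your proposal is correct and follows essentially the same route as the paper: both take the length-$N$ suffixes of $W_n(i)$ for the top $\min(k,A_n/2B_n)$ indices $i\le k$, observe that suffixes of right special words are right special, and reduce pairwise distinctness to the fact that $\tau_{[0,n]}(0)$ is shorter than $\tau_{[0,n]}(1)$ and is not a suffix of it. The only divergence is on that last fact, which the paper dismisses as easy to check and which you propose to settle via recognizability; this is heavier than needed, since comparing the length-$B_n$ suffixes block by block shows that the level-$n$ statement reduces exactly to the level-$(n-1)$ statement (``suffix of length $B_{n-1}$ of $\tau_{[0,n-1]}(1)$ differs from $\tau_{[0,n-1]}(0)$''), and the base case is $001\neq 011$, so a direct induction suffices.
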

\begin{proof}
Let $n\geq 1 $, $k \in [1 , a_{n+1}-1]$ and $N\in [k B_n +C_n , (k+1)B_n +C_n)$.
For each $i\in [k+1 - \min (k , {A_n}/{2B_n}), k]$ let  $w_i$ be the suffix of length $N$ of $W_n (i)$.
It is well-defined as the length of $W_n (i)$ is greater than $(k+1)B_n + C_n$.
One has, 
\begin{align}
\label{eq:suffix}
w_i = p_i {\tau}_{[0,n]} (0^i){\tau}_{[0,n-1]} (0) \cdots \tau_1 (0)0,
\end{align}
where $p_i$ is a suffix of ${\tau}_{[0,n]} (0^{i-1}1)$.
From \cref{lemme:bispecial} these words are right special. 
It remains to show they are distinct.
Let $i$ and $j$ be distinct integers belonging to the interval $[k+1 - \min (k , {A_n}/{2B_n}) , k ]$.
As ${\tau}_{[0,n]} (0)$ has a length smaller than the one of ${\tau}_{[0,n]} (1)$ and is not a suffix of it, it is easy to check that $w_i$ and $w_j$ are distinct (see \eqref{eq:suffix}).
\end{proof}

We have all the ingredients to construct a subshift of rank 2 with superlinear but subquadratic complexity. 
\begin{prop}
There exists a minimal subshift $(X,S)$ of topological rank $2$ such that $\liminf_{n\to +\infty} {p_X(n)}/{n} = +\infty$ and $\lim_{n\to +\infty} p_X(n)/n^2=0$. 
\end{prop}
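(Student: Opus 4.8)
The plan is to fix the growth of $(a_n)_{n\ge0}$ so that the two counting regimes pull in opposite directions but can still be reconciled. Writing out the recursions $B_n=B_{n-1}+2A_{n-1}$ and $A_n=(\tfrac{a_n(a_n+1)}{2}-1)B_{n-1}+(a_n-1)A_{n-1}$, one gets $A_n\ge(a_n-1)A_{n-1}$ and $A_n/B_n\sim a_n/2$, so I would impose $a_n/a_{n-1}\to+\infty$ together with $a_{n+2}=o(A_n)$; both hold for instance for $a_n=n!$, since then $A_n$ grows at least like $\prod_{j\le n}(a_j-1)$, which dwarfs $a_{n+2}$. Minimality of $(X,S)=(X_{\boldsymbol\tau},S)$ follows from the positivity (hence primitivity) of $\boldsymbol\tau$, and all alphabets are $\{0,1\}$.

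To get topological rank exactly $2$, I would first note that every $\tau_n$ is proper (all images start with $0$ and end with $1$) and that $\boldsymbol\tau$ is recognizable by \cref{lemme:onetoone}. Recoding the first, non-hat morphism $\tau_0$ into a hat-morphism by splitting the letters of its images according to their position — a topological conjugacy precisely because $\tau_0$ is recognizable — places us under the hypotheses of \cref{prop:DurandLeroy2.3}, producing a Bratteli--Vershik representation with $|V_n|=2$ vertices at every level $n\ge1$. Thus the rank is at most $2$; since $(X,S)$ is an infinite subshift it is expansive and not an odometer, so \cref{DowMaass} forces the rank to be at least $2$, hence equal to $2$.

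For superlinearity I would use that on a binary alphabet the first difference of the complexity counts right special words exactly, $p_X(N+1)-p_X(N)=|\mathcal{RS}_N(X)|$, and apply \cref{lemma:atleast}, which gives $|\mathcal{RS}_N(X)|\ge\min(k,A_n/2B_n)$ for $N\in[kB_n+C_n,(k+1)B_n+C_n)$. Since $A_n/2B_n\sim a_n/4\to\infty$, the only delicate regime is small $k$. Here the point is that the level-$n$ range overlaps the level-$(n-1)$ range: for $N$ in the lower part of level $n$ one has $N\gtrsim B_n\sim a_{n-1}B_{n-1}$, so $N$ still sits in level $n-1$ with parameter $k'\gtrsim a_{n-1}$, yielding $|\mathcal{RS}_N(X)|\gtrsim a_{n-1}/4$; and once $N$ exits the level-$(n-1)$ range, its level-$n$ parameter already satisfies $k\gtrsim a_n/a_{n-1}$. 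Hence for all large $N$ one has $|\mathcal{RS}_N(X)|\gtrsim\min(a_{n-1},a_n/a_{n-1})\to+\infty$, so $p_X(N+1)-p_X(N)\to+\infty$ and, by summation, $\liminf_N p_X(N)/N=+\infty$.

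For subquadraticity I would apply \cref{theo:ThreeMorphisms} to $\phi=\tau_{[0,n]},\ \sigma=\tau_{n+1},\ \tau=\tau_{n+2}$, whose combined range $[A_n,A_{n+1}]$ covers all large $N$; bounding the relative complexities by repetition complexities through \cref{lem:boundCompRep} and using $\rcomp(\tau_m)=2a_m$ gives $p_X(N)\le C\,a_{n+2}\,N$ on $[A_n,A_{n+1}]$ for an absolute constant $C$, so $p_X(N)/N^2\le C\,a_{n+2}/A_n\to0$. The main obstacle is precisely the tension in the choice of $(a_n)$: \cref{lemma:atleast} alone yields only one right special word when $k=1$, so making the complexity superlinear for \emph{every} $N$ rests on the overlap-of-consecutive-levels argument and the fast growth $a_n/a_{n-1}\to+\infty$, which must be kept compatible with the ceiling $a_{n+2}=o(A_n)$ demanded by subquadraticity.
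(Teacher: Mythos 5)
Your construction is the paper's own (same directive sequence, same auxiliary lemmas \cref{lemme:onetoone}, \cref{lemma:atleast}, \cref{prop:DurandLeroy2.3}), and two of the three ingredients are fine: the rank-$2$ argument matches the paper's, and your route to $p_X(n)=o(n^2)$ via $\rcomp(\tau_m)=2a_m$ and \cref{lem:boundCompRep} is a legitimate alternative to the paper's sharper ad hoc bound $\compi{\tau_{[0,m]}}{\tau_{m+1}}{n}{2}\le 6n/B_m$, at the cost of the extra (and compatible) hypothesis $a_{n+2}=o(A_n)$.

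The gap is in the superlinearity argument, and it originates in the asymptotic $A_n/B_n\sim a_n/2$, which is false — indeed internally inconsistent. Writing $r_n=A_n/B_n=\bigl(\tfrac{a_n(a_n+1)}{2}-1+(a_n-1)r_{n-1}\bigr)/(1+2r_{n-1})$, one gets $r_n\approx a_n/2+a_n^2/(4r_{n-1})$, so $r_n=O(a_n)$ would force $r_{n-1}=\Omega(a_n)$, contradicting $r_{n-1}\sim a_{n-1}/2$. For $a_n=n!$ the quantity $r_n/a_n$ oscillates between $\Theta(1)$ and $\Theta(n)$, so $r_n=\Theta(a_{n+1})$ for infinitely many $n$. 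For such $n$ one has $B_{n+1}=B_n(1+2r_n)=\Theta(a_{n+1}B_n)$, hence the level-$(n+1)$ intervals $[kB_{n+1}+C_{n+1},(k+1)B_{n+1}+C_{n+1})$ begin essentially where the level-$n$ range ends (at $N\approx a_{n+1}B_n$), with level-$(n+1)$ parameter $k=a_{n+1}B_n/B_{n+1}=\Theta(1)$ — not $\gtrsim a_{n+1}/a_n$ as you claim; the relevant ratio is $a_{n+1}/r_n$, not $a_{n+1}/a_n$, and $a_n/a_{n-1}\to\infty$ does not control it. In that window \cref{lemma:atleast} guarantees only $\min(k,\cdot)=O(1)$ right special words and no other level contributes, so your bound $|\mathcal{RS}_N(X)|\gtrsim\min(a_{n-1},a_n/a_{n-1})$ breaks down and $p_X(N+1)-p_X(N)\to\infty$ is not established. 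This is precisely why the paper chooses $(a_n)$ adaptively: $a_{n+1}>(3n+4)A_n/B_n$ is exactly the inequality giving $(n+1)B_{n+1}+C_{n+1}<a_{n+1}B_n+C_n$, so the level-$(n+1)$ intervals with parameter $k\ge n+1$ already start before the level-$n$ range is exhausted, and every large $N$ gets at least $\min(n,A_n/2B_n)\to\infty$ right special words. Replace $a_n=n!$ by such an adaptive choice (or produce right special words beyond those of \cref{lemma:atleast}) and your argument goes through.
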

\begin{proof}
Let $(X_{\boldsymbol{\tau}},S)$ be the $\cS$-adic subshift generated by the previous morphisms $\boldsymbol{\tau}=(\tau_n\colon \{0,1\}^{\ast}\to \{0,1\}^{\ast})_{n\geq 0}$, where we choose an increasing integer valued sequence $(a_n)_{n\geq 0}$ satisfying $a_{n+1} >  (3n+4){A_n}/{B_n}$. 

The morphisms  $\tau_n$, $n\ge 0$, being positive,  proper and defining a recognizable  directive sequence (\cref{lemme:onetoone}), generate a rank 2 minimal subshift $(X_{\boldsymbol{\tau}},S)$, thanks to \cref{prop:DurandLeroy2.3}.

To show that $\liminf_{n\to +\infty} {p_X(n)}/{n} = +\infty $, it is enough  to prove that the number of right special factors goes to infinity. 
As $\lim_{n\to +\infty} \min (n , {A_n}/{2B_n})  =+\infty$,
from \cref{lemma:atleast} it suffices to show that the intervals $[k B_n +C_n , (k+1)B_n +C_n)$ with $n\leq k < a_{n+1}$, $n\geq 0$, cover a half line $[N, +\infty )$ for some $N$.
Thus we only need to check  that  $(n+1)B_{n+1} +C_{n+1}$ is less than $a_{n+1} B_n+C_n$:
\begin{align*}
(n+1)B_{n+1} +C_{n+1} & \leq (n+2)B_n + 2(n+1)A_n  +C_n \leq (3n+4)A_n +C_n \\
&  < a_{n+1} B_n + C_n  .
\end{align*} 

This proves that $p_X$ is superlinear.

For the subquadratic property we claim the following.  

\textbf{Claim:} For all $m,n\geq 0$ we have that $\compi{\tau_{[0,m]}}{\tau_{m+1}}{n}{2}$ is bounded by $6{n}/{B_m}$. 

To see this, note that a word in $w$ in $\mathcal{F}(\ell,\tau_{[0,m]},n,\tau_{m+1}(\cA_{m+2}^2))$, $\ell=0,1$, is either a subword of $\tau_{m+1}(a)$, $a\in \{0,1\}$, or it starts in $\tau_{m+1}(a)$ and ends in $\tau_{m+1}(a')$ for $a,a'\in \{0,1\}$.  

For the first type, any word $w$ in $\mathcal{F}(1,\tau_{[0,m]},n,\tau_{m+1}(\cA_{m+2}))$ is completely determined by its first occurrence of 1. Similarly for $\ell=0$, any word $w$ in the set $\mathcal{F}(0,\tau_{[0,m]},n,\tau_{m+1}(\cA_{m+2}))$ is completely determined by its second occurrence of 1. 
Since $|\tau_{[0,m]}(0)|=B_m$, there are no more than ${n}/{B_m}$ words $w$ of this type in each case, for $\ell=0$ or $1$.  

For the second type, for any $a,a'\in \{0,1\}$ there are at most ${n}/{B_m}$ words starting in $a$ and ending in $a'$, since $B_m=\langle\tau_{[0,m]}\rangle$. Hence there are at most $2^2{n}/{B_m}$ of such words. This gives the conclusion of the Claim.  

For $n\in \N^{\ast}$, let $M=M(n)\in \N$ such that $n\in [\|{\tau}_{[0,M)}  \|, \| {\tau}_{[0,M+1)}\|]$. 
   \cref{theo:ThreeMorphisms} applied  to $\phi = {\tau}_{[0,M)}$, $\sigma = \tau_{M}$ and $\tau=\tau_{M+1}$, together with the Claim, provide
\begin{align*}p_X(n) \leq \left(6\frac{n}{B_{M(n)}}+ 6\frac{n}{B_{M(n)-1}}+4\right)\cdot n. 
\end{align*}
Dividing both sides by $n^2$ enables us to conclude. 
\end{proof}

\section{Asymptotic components in systems of topological rank 2} \label{sec:RankTwo}
This section is devoted to highlight a rigidity property in minimal Cantor systems of topological rank 2. For these systems and minimal $\cS$-adic subshifts of alphabet rank equal to 2 we analyse a specific dynamical invariant, namely, their asymptotic components.

We show that minimal $\cS$-adic subshifts on two letters have at most two asymptotic components  and that a non-periodic minimal Cantor system of topological rank 2 has only one asymptotic component (\cref{cor:ACRank2}).
In particular, the  example in Section \ref{sec:Rk2NonSperline} shows it exist non-periodic minimal subshifts on two letters with only one asymptotic component but at the same time having superlinear complexity.  
A relevant application of these results concerns the automorphisms group of a subshift, {\it i.e.}, the group of self-homeomorphisms of the subshift commuting with the shift map.
A straightforward consequence of  \cite[Theorem 3.1]{Donoso&Durand&Maass&Petite:2016} for minimal subshifts of topological rank equal to 2 is that any automorphism is a power of the shift map.
The same result provides for a minimal $\cS$-adic subshift on two letters that the group generated by the shift map is a subgroup of index at most $2$ within the one generated by the automorphisms of the subshift. 
To be more complete in this topic, let us recall that using the strategy in \cite{Cyr&Kra:2015} and \cite[Lemma 5]{Quas&Zamboni:2004}, it is possible to show that non-superquadratic complexity subshifts are {\em coalescent}, {\it i.e.}, any endomorphism of the subshift is invertible (see \cite[Section 3]{Donoso&Durand&Maass&Petite:2017} for details). Thus \cref{cor:SadicNonSuperQuadra} provides that any $\cS$-adic subshift on two letters is coalescent.

We recall some classical terminology in topological dynamics.  

Let $(X,T)$ be a topological dynamical system, where $X$ is endowed with a metric $\rm dist(\cdot,\cdot)$. Two points $x, y \in X$ are said to be {\em (left)}\footnote{One can also consider {\em right} asymptotic pairs, but we do not need that here.} {\em asymptotic} whenever  $\lim_{n\to +\infty} \text{dist}(T^{-n} x, T^{-n} y) =0$. It is well known that an aperiodic subshift always admits (left) asymptotic points $x,y$ where $x\neq y$ (such pairs are also known as nontrivial asymptotic pairs), see \cite[Chapter 1]{Auslander:1988}. 
For simplicity, in what follows we use the word asymptotic to mean left asymptotic. 

Given $x,y \in X$ we say that the orbits ${\rm Orb}_T(x)$ and ${\rm Orb}_T(y)$ are  {\em asymptotic} if there exist points $x' \in {\rm Orb}_{T}(x)$ and $y' \in {\rm Orb}_{T}(y)$ that are asymptotic. This condition is equivalent to say that $y$ is  asymptotic to some $T^nx$ or vice versa.
Then, for each   $x' \in {\rm Orb}_{T}(x)$ there is a  point $y' \in {\rm Orb}_{T}(y)$ asymptotic to $x'$.
It is clear that this relation is an equivalence relation on the collection of orbits.
We call a non trivial  equivalence class, ({\it i.e.}, a class not reduced to a point) an {\em asymptotic component}.

We start with a lemma, which is a weaker version of the famous Fine and Wilf theorem \cite[Section 8]{Lothaire:2002}.

\begin{lem}\label{lem:prefix}
Let $A$ and $B$ be different non-empty words. Then, there exists a word $u$ of length strictly less than $|A|+|B|$ such that for any pair of different sequences $x=x_0 \cdots x_L\in A\{A,B\}^{*}$ and $y=y_0 \cdots y_M \in B\{A,B\}^{*}$, $u$ is a common prefix of $x$ and $y$ and, when  $|u|$ is not greater than $L$ and $M$, $x_{|u|} \neq y_{|u|}$.
\end{lem}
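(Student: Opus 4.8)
My plan is to pass to infinite words and bound the length of the longest common prefix. First I would extend $x$ and $y$ to one-sided infinite words $\bar x\in A\{A,B\}^{\N}$ and $\bar y\in B\{A,B\}^{\N}$ by appending arbitrary blocks from $\{A,B\}$; since elements of $A\{A,B\}^{*}$ and $B\{A,B\}^{*}$ end at block boundaries, $x$ and $y$ are genuine prefixes of $\bar x$ and $\bar y$, so the longest common prefix of $x,y$ is no longer than that of $\bar x,\bar y$. It then suffices to bound the latter by $|A|+|B|$ and to take $u$ to be the longest common prefix of $x$ and $y$: it is a common prefix by construction, its length is $<|A|+|B|$, and whenever position $|u|$ lies in both words (that is $|u|\le L$ and $|u|\le M$) maximality of $u$ forces $x_{|u|}\neq y_{|u|}$, as required. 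I should stress that the natural hypothesis here is that $A$ and $B$ are not powers of a common word (equivalently $AB\neq BA$), which holds in the aperiodic setting where the lemma is applied; it is genuinely needed, since for $A=01,\ B=0101$ arbitrarily long powers of $01$ produce common prefixes of unbounded length. If neither of $A,B$ is a prefix of the other, then $\bar x$ and $\bar y$ disagree already at the first position where $A$ and $B$ differ, which is $<\min(|A|,|B|)<|A|+|B|$, and we are done; so by the symmetry $(A,\bar x)\leftrightarrow(B,\bar y)$ I may assume $A$ is a proper prefix of $B$, writing $B=AC$ with $C$ non-empty.

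The heart of the argument is a clean rigidity of $\bar y$. Put $N=|A|+|B|$ and let $B^{\N}=BBB\cdots$ be the infinite word obtained by repeating $B$. I claim the first two blocks of $\bar y$ already pin down $\bar y_{[0,N)}=B^{\N}_{[0,N)}$: the initial block $B$ fills $[0,|B|)$ and matches $B^{\N}$ there, while the second block lies in $\{A,B\}$ and hence begins with $A$ (as $A$ is a prefix of $B$), so on the remaining range $[|B|,N)$ of length $|A|$ it matches the prefix $A=B_{[0,|A|)}$ of the second copy of $B$. Thus $\bar y_{[0,N)}=BA=ACA$ is completely determined, and a common prefix of $\bar x,\bar y$ of length $\ge N$ would force $\bar x_{[0,N)}=ACA$ as well.

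To reach a contradiction I would then exploit the block decomposition $\bar x=A\beta_2\beta_3\cdots$ with $\beta_i\in\{A,B\}$; the requirement becomes $(\beta_2\beta_3\cdots)_{[0,|B|)}=CA$. If $\beta_2=B=AC$, reading off the first $|B|$ letters gives $AC=CA$, i.e. $A$ and $B$ commute, a contradiction; hence $\beta_2=A$, and after stripping this $A$ the condition reduces to matching a suffix of $CA$, which upon comparing $A$ with $C$ is exactly a constraint of the same shape for the strictly shorter pair $(A,C)$, with $|A|+|C|=|B|<|A|+|B|$. This Euclidean-type descent either yields a commutation relation forcing $AB=BA$ or strictly decreases the total length while preserving non-commutativity, so it must terminate in a mismatch, proving $\bar x_{[0,N)}\neq ACA$ and hence that the common prefix has length $<|A|+|B|$. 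This descent is precisely the content of the Fine and Wilf theorem \cite[Section 8]{Lothaire:2002}, which one may instead invoke directly as a black box. I expect this descent to be the main obstacle: the reduction and the two-block rigidity of $\bar y$ are easy and self-contained, whereas controlling all the ways a concatenation of $A$'s and $B$'s can reproduce the periodic word $B^{\N}$—the step where non-commutativity of $A$ and $B$ is indispensable—is the genuine work and is exactly what Fine and Wilf quantifies.
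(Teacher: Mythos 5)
Your write-up contains one genuine gap: the quantifier order. The lemma asserts a \emph{single} word $u$, chosen once from $A$ and $B$, that works simultaneously for every pair $(x,y)$, and this uniformity is exactly what is consumed downstream (in the proof of \cref{prop:ACRank2} one concludes $|a^{(j)}|=|u_n|$ for all three asymptotic pairs precisely because $u_n$ does not depend on the pair). You instead set $u$ equal to the longest common prefix of the \emph{given} pair and only bound its length by $|A|+|B|$; nothing in your argument shows that the position of first disagreement, or the letters appearing there, are the same for all pairs. Your two ingredients do make this repairable: writing $N=|A|+|B|$, all $\bar y\in B\{A,B\}^{\N}$ share the prefix $BA$ of length $N$, and if $s$ denotes the minimal cross-lcp and $q$ (resp. $p$) the length of the longest common prefix of all elements of $A\{A,B\}^{\N}$ (resp. of $\{A,B\}^{\N}$), then $q=|A|+p$ and $p=\min(q,s)$ force $q=|A|+s>s$; hence any $\bar x$ agrees with a minimizing $\bar x_0$ through position $s$, any $\bar y$ agrees with $\bar y_0$ through position $s$, and every cross pair disagrees exactly at $s$ with the same pair of letters. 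But this step has to be said: as written you prove ``for all $(x,y)$ there exists $u$'' rather than ``there exists $u$ for all $(x,y)$''.

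Two further remarks. Your observation that the statement requires $A$ and $B$ not to be powers of a common word is correct and is a real catch: for $A=0$, $B=00$ the pair $x=ABB$, $y=BB$ has common prefix of length $4>|A|+|B|$, so the lemma as printed fails, and the paper's own induction breaks silently at the same place (its reduction $(A,B)\mapsto (A,B')$ with $B=AB'$ eventually produces two \emph{equal} words, where the hypothesis $A\neq B$ is violated); as you note, non-commutativity is automatic in the aperiodic setting where the lemma is used. Apart from that, your descent is in substance the paper's: the paper deletes the leading $A$ and passes from $A\{A,B\}^{*}$, $B\{A,B\}^{*}$ to $A\{A,B'\}^{*}$, $B'\{A,B'\}^{*}$, which is exactly your reduction to the pair $(A,C)$, and your measure $|A|+|B|$ is in fact the correct one (the paper's $||B|-|A||$ need not decrease under this reduction). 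Be careful, though, with outsourcing the termination to Fine and Wilf as a black box: the two-period statement does not apply verbatim to $\bar x_{[0,N)}$, which need not be a prefix of $A^{\infty}$; the block-by-block descent, carried out so as to track the common prefix rather than merely to reach a contradiction, is what is actually needed, and doing it that way would simultaneously close the uniformity gap.
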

\begin{proof}
We prove the lemma by induction on the length   $||B|-|A||$. Assume first that $|A| = |B|$, so $||B|-|A||=0$. Since $A \neq B$, $u$ can be taken as the maximal common prefix of $A$ and $B$, which can be the empty word.

Let us assume the lemma is true for all words $A$ and $B$ such that $||B|-|A|| \leq n$ for all $n\geq 0$. 
Let $A$ and $B$ be two words such that  
$||B|-|A||=n+1$. Without loss of generality we may assume that $|A|<|B|$. If $A$ is not a prefix of $B$, take $u$ as the maximal common prefix of $A$ and $B$. Otherwise, $B$ has the form $AB'$ for some non-empty word $B'$. 

Let $x$ and $y$ be two sequences such that $x=x_0 \cdots x_L \in A\{A,B\}^{*}$ and $y=y_0 \cdots y_M \in B\{A,B\}^{*}$. 
Changing $B$ by $AB'$, we get that the sequence $x' =x_{[|A|,L]}$, if not empty, belongs to $A\{A,B'\}^{*}$ and $y'=y_{[|A|,M]}$ belongs to $B'\{A,B'\}^{*}$. The induction hypothesis applied to $A$ and $B'$ implies that it exists a word $u'$ of length strictly less than $|A|+|B'|=|B|$, independent of $x$ and $y$, which is a common prefix of $x'$ and $y'$ and 
$x_{|A|+|u'|} \neq y_{|A|+|u'|}$. The result follows by considering the word $u=Au'$. 
\end{proof}

We recall the notion of recognizability of a family of words with respect to a subshift as defined in Section \ref{sec:recog-factorizing}. 
Let $\cW\subset \cB^{\ast}$ be a set of non-empty words and $X\subseteq \cB^\Z$ be a subshift. We say $\cW$ is recognizable in $X$ if for a subshift $Y\subseteq \cA^\Z$ and a morphism $\tau:\cA^*\to \cB^*$ with $\tau(\cA)=\cW$ we have: $X$ is the smallest subshift containing $\tau(Y)$ and $\tau$ is recognizable in $Y$, ({\it i.e.}, every $x\in X$ has a unique $\cW$-factorization adapted to $(Y, \tau)$). 

The typical examples we will consider in this section are inspired by the following theorem that is a particular result of \cite{Berthe&Steiner&Thuswaldner&Yassawi:2018}.

\begin{theo}[{\cite[Theorem 4.6]{Berthe&Steiner&Thuswaldner&Yassawi:2018}}]\label{theo:BSTY}
Let $\cA$ be an alphabet with $2$ letters. 
Let $(X_{\boldsymbol{\tau}}, S)$ be an aperiodic $\cS$-adic subshift generated by the directive sequence
$\boldsymbol{\tau}=(\tau_{n}:\cA^*\to\cA^*)_{n\ge 0}$, with $\tau_{n}$ non-erasing for all $n\geq 0$.
Then
$\{\tau_{[0,n)}(a): a\in \cA\}$ is recognizable in $X_{\boldsymbol{\tau}}$ (with respect to $(X^{(n)}_{\boldsymbol{\tau}},\tau_{n-1})$)
for any $n \ge 1$.  
\end{theo}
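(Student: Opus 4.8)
The plan is to reduce the statement to a single recognizability assertion about the base morphism and then attack it with combinatorics on words, using \cref{lem:prefix} as the engine. By \cref{lem:fact-rec} and \cref{lem:mosserec}, recognizability of the family $\{\tau_{[0,n)}(a):a\in\cA\}$ in $X_{\boldsymbol{\tau}}$ with respect to $(X^{(n)}_{\boldsymbol{\tau}},\tau_{[0,n)})$ is equivalent to recognizability of the morphism $\tau_{[0,n)}$ in $X^{(n)}_{\boldsymbol{\tau}}$, and by a repeated application of \cref{lem:CompMorphRec} this in turn amounts to recognizability of each $\tau_k$ in $X^{(k+1)}_{\boldsymbol{\tau}}$ for $0\le k<n$. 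Since the shifted directive sequence $(\tau_k)_{k\ge m}$ generates $X^{(m)}_{\boldsymbol{\tau}}$, which is again a two-letter $\cS$-adic subshift (and aperiodic, since a periodic $X^{(m)}_{\boldsymbol{\tau}}$ would force $X_{\boldsymbol{\tau}}$ periodic through the non-erasing morphisms $\tau_{[0,m)}$), it suffices to prove the following uniform statement: for every aperiodic two-letter $\cS$-adic subshift, the base morphism $\tau_0$ is recognizable in $X^{(1)}_{\boldsymbol{\tau}}$.

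Write $\cA=\{0,1\}$, $A=\tau_0(0)$, $B=\tau_0(1)$ and $\cW=\{A,B\}$. First I note $A\ne B$: otherwise $\tau_0$ collapses both letters to a common word and $X_{\boldsymbol{\tau}}$ is periodic, contradicting aperiodicity. Assume, for contradiction, that $\tau_0$ is not recognizable in $X^{(1)}_{\boldsymbol{\tau}}$; then some $x\in X_{\boldsymbol{\tau}}$ has two distinct centered $\tau_0$-representations $(k,y)$ and $(k',y')$ with $y,y'\in X^{(1)}_{\boldsymbol{\tau}}$, giving two $\cW$-factorizations of $x$ with cut sets $C$ and $C'$. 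The first step is to show these cut sets are disjoint. Suppose they share a cut $p$. If the two factorizations placed different words of $\cW$ immediately to the right of $p$, then $x_{[p,\infty)}$ would simultaneously lie in $A\{A,B\}^{*}$ and in $B\{A,B\}^{*}$, which \cref{lem:prefix} forbids for a single sequence. Hence they place the same word, so the next cut is again common, and by induction all cuts to the right of $p$ coincide; applying the same argument to the reversed words $\widetilde A\ne\widetilde B$ shows all cuts to the left of $p$ coincide as well. Thus $C=C'$, and since $A\ne B$ lets us read off $y$ from the common word sequence, we would get $(k,y)=(k',y')$, a contradiction. Therefore $C\cap C'=\varnothing$.

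It remains to exclude the case of two disjoint $\cW$-factorizations of the aperiodic point $x$; this is the main obstacle. By \cref{theo:Wfact} there are at most $|\cW|=2$ disjoint factorizations, so these two are all of them, and they interleave. I would then show, again through \cref{lem:prefix} applied to the local overlaps between an $F$-block and the $F'$-block straddling each of its cuts (and vice versa), that the length-$|A|$ and length-$|B|$ patterns forced on $x$ admit a common period; concretely, the suffix/prefix coincidences produced by the interleaving cuts make $A$ and $B$ powers of a common word. But then every point in $X^{(0)}_{\boldsymbol{\tau}}$ is periodic, contradicting the aperiodicity of $x$. This contradiction shows $\tau_0$ is recognizable in $X^{(1)}_{\boldsymbol{\tau}}$, completing the reduction and hence the proof. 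The delicate point throughout is that the two-letter hypothesis is exactly what makes the count in \cref{theo:Wfact} small enough (namely $2$) for the Fine–Wilf mechanism of \cref{lem:prefix} to force periodicity; with more letters the analogous argument breaks down, which is why only eventual recognizability survives in general (compare \cref{theo:bsty}).
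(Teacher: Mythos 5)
The paper does not actually prove this statement: it is imported verbatim, with attribution, from \cite[Theorem 4.6]{Berthe&Steiner&Thuswaldner&Yassawi:2018}, so there is no internal proof to compare against and your attempt must stand on its own. Your reduction is sound: via \cref{lem:fact-rec}, \cref{lem:mosserec} and iterated use of \cref{lem:CompMorphRec}, the claim does reduce to showing that the base morphism of each shifted (still two-letter, still aperiodic) directive sequence is recognizable in the level-one subshift; the observation that $A=\tau_0(0)\neq B=\tau_0(1)$ is correct, and so is the ``shared cut'' half of your dichotomy — \cref{lem:prefix}, applied to the word and to its reversal, does propagate a single common cut to full coincidence of the two factorizations.

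The genuine gap is the disjoint case, which you explicitly leave as ``I would then show\dots''. That step is not a detail: it is the entire content of the theorem and the only place the two-letter hypothesis really bites. Two concrete problems. First, the intermediate claim you announce — that two disjoint $\{A,B\}$-factorizations of $x$ force $A$ and $B$ to be powers of a common word — is false as stated: the periodic point ${}^{\infty}(ab)^{\infty}$ admits two disjoint $\{ab,ba\}$-factorizations although $ab$ and $ba$ have no common primitive root. The correct assertion is that $x$ itself must be \emph{periodic}, which still gives the contradiction with aperiodicity, but it is a different statement. Second, neither version follows from ``\cref{lem:prefix} applied to the local overlaps'': that lemma compares two $\{A,B\}$-products that start with \emph{different} blocks at a \emph{common} position, whereas in the disjoint case no position is a cut of both factorizations, so there is no common starting point to feed into it. What is actually required is a bi-infinite defect-type argument (the $k=|\cW|=2$ case of the Karhumäki--Maňuch analysis underlying \cref{theo:Wfact}, or the argument of Section 4 of \cite{Berthe&Steiner&Thuswaldner&Yassawi:2018}): one tracks the finitely many possible offsets of the cuts of one factorization inside the blocks of the other, extracts a recurring configuration by pigeonhole, derives a conjugacy equation of the form $zs=sz'$, and then propagates it to a global period of $x$. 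As written, \cref{theo:Wfact} only tells you that two disjoint factorizations is the \emph{maximum} an aperiodic point may have; it does not exclude that maximum, so your proof stops exactly where the real work begins.
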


The next proposition particularly applies to Cantor systems with topological rank equal to $2$ and to $\cS$-adic subshifts obtained from recognizable morphisms on alphabets of $2$ letters. 
For its proof we recall some notions and notations. Let $\cW \subset \cA^*$ be a set of non-empty words and $X\subseteq \cA^\Z$ be a subshift. Recall that the cutting points of a $\cW$-factorization $F_x \colon \Z \to \cW \times \Z$ of a point $x \in X$
is the set
$$C(F_x)= \{ i \in \Z: F_x(k) = (w,i) \textrm{ for some } k \in \Z, w \in \cW\}.$$   
We say the $\cW$-factorizations $F_x$ and $F_{x'}$ of $x$ and $x'$ in $X$ {\em coincide} on a set $J \subseteq \Z$ if $C(F_x) \cap J = C(F_{x'}) \cap J$. 

\begin{prop}\label{prop:ACRank2}
Let $(A_{n})_{n\ge 1}$ and $(B_{n})_{n\ge 1}$ be two sequences of non-empty words in $\cA^{*}$ with increasing length. Let $X \subseteq \cA^\Z$ be a subshift such that $\{A_{n},B_{n}\}$ is recognizable in $X$ for all $n \ge 1$. Then, $(X,S)$ admits at most $2$ asymptotic components.  
Moreover, if the maximal common suffix of $A_{n}$ and $B_{n}$ has a length going to infinity with $n$,
then $(X,S)$ admits at most $1$ asymptotic component.
\end{prop}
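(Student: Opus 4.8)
The plan is to reduce the statement to an analysis of how a single nontrivial asymptotic pair sits inside the recognizable factorizations $\{A_n,B_n\}$ at every level, and then to argue that the combinatorial data distinguishing its asymptotic component can take at most two values (at most one under the suffix hypothesis). First I would recall that, since $X$ is a subshift, two points are (left) asymptotic exactly when they agree on a left half-line; so, after applying a common power of $S$, I may normalize any nontrivial asymptotic pair $(x,y)$ so that $x_k=y_k$ for all $k<0$ and $x_0\neq y_0$. Fix a large level $n$ with recognizability constant $R_n$ for the morphism $\tau_n$ coding $\{A_n,B_n\}$. By the local character of recognizability (\cref{lem:rayon-rec}), any cut of the $\{A_n,B_n\}$-factorization $F^n_x$ lying in $(-\infty,-R_n]$ is also a cut of $F^n_y$ with the same word; hence $F^n_x$ and $F^n_y$ \emph{coincide} on $(-\infty,-R_n]$ and the two coded sequences $\bar x,\bar y\in Y_n$ are themselves a left-asymptotic pair in the two-letter subshift $Y_n$.

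Next I would locate the divergence. Choosing the last common cut $c_n\le 0$, the words $x_{[c_n,\infty)}$ and $y_{[c_n,\infty)}$ read, block by block, sequences in $A_n\{A_n,B_n\}^{\mathbb N}$ and $B_n\{A_n,B_n\}^{\mathbb N}$ (or the symmetric assignment). \cref{lem:prefix} then forces their maximal common prefix to have length strictly less than $|A_n|+|B_n|$ and to be exactly a common prefix of $A_n$ and $B_n$; thus position $0$ falls inside the two divergent blocks, precisely at the first coordinate where $A_n$ and $B_n$ differ. This shows the coded pair $(\bar x,\bar y)$ differs at a single letter, with $\{\bar x_t,\bar y_t\}=\{0,1\}$, i.e. its shared left tail is a left-special left-ray of $Y_n$. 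The subtle point is that the cut of $F^n_x$ and the cut of $F^n_y$ nearest $0$ need not be aligned: either they agree (the divergent $A_n$/$B_n$ blocks start at the same coordinate $-\ell_n$, where $\ell_n$ is the common-prefix length), or there is a bounded \emph{phase shift} between the two factorizations near $0$. I expect these two alternatives — aligned cut versus phase-shifted cut — to be the only two possibilities, and to account for the bound of two asymptotic components.

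To turn this into a genuine bound I would let $n\to\infty$. Since $|A_n|,|B_n|\to\infty$, the divergent-block description holds for every large $n$, so the germ of $(x,y)$ at $0$ is pinned down, for each $n$, by the unordered choice of divergent words $\{A_n,B_n\}$ together with the coded left-special ray of $Y_n$; minimality (recurrence) together with the uniqueness of centered $\tau_n$-representations (\cref{lem:mosserec}, \cref{lem:fact-rec}) then prevents infinitely many distinct configurations and collapses everything to the two alignment classes above, yielding at most two asymptotic components. \textbf{This counting step — proving that exactly these (at most) two configurations can occur, and that no further continuations split them — is the main obstacle}, since $\cA$ may have many letters and a general two-letter subshift has arbitrarily many asymptotic pairs; the whole force of the hypothesis that $\{A_n,B_n\}$ be recognizable for \emph{every} $n$ with lengths tending to infinity must be used here, through \cref{lem:prefix} and the locality of recognizability.

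Finally, for the refinement I would show that a common suffix of $A_n$ and $B_n$ of length tending to infinity rules out the phase-shifted alternative. Concretely, a nonzero phase shift between $F^n_x$ and $F^n_y$ near the divergence would force a long suffix of one divergent word to be read, with the same letters, starting at an interior position of the other; applying \cref{lem:prefix} to the reversed (suffix) structure makes this incompatible with uniqueness of factorizations once the common suffix is long enough, so for all large $n$ the cuts of $x$ and $y$ must be aligned. This leaves only the single aligned class, and hence at most one asymptotic component, completing the proof.
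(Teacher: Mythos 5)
Your setup is on track --- normalizing each asymptotic pair to diverge at coordinate $0$, using the locality of recognizability (\cref{lem:rayon-rec}) to obtain a last common cut of the two $\{A_n,B_n\}$-factorizations, and invoking \cref{lem:prefix} to control the common prefix after that cut --- but the argument stops exactly where the proof has to happen, and the mechanism you propose for the bound of two is not the right one. There is no ``aligned versus phase-shifted'' dichotomy to exploit: by definition the last common cut $c_n$ is a cut of \emph{both} factorizations, and after it one point reads $A_n$ while the other reads $B_n$; whatever misalignment occurs to the right of $c_n$ is irrelevant to a left-asymptotic analysis. The actual source of the number $2$ is different. First, \cref{lem:prefix} produces a word $u_n$ depending only on $\{A_n,B_n\}$, not on the asymptotic pair, which is the maximal common prefix of the two rays starting at $c_n$; since the pair is normalized to diverge exactly at $0$, this forces $c_n=-|u_n|$ for \emph{every} normalized asymptotic pair. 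Second, the word occupying the block of the common left tail ending at $c_n$ is one of only two words, $A_n$ or $B_n$. The proof then must compare \emph{three} putative asymptotic components simultaneously: by pigeonhole two of them carry the same preceding word $w$, hence share the left tail $w\,u_n$ of length $|u_n|+|w|\geq \min(|A_n|,|B_n|)\to\infty$ ending at $0$; a second pigeonhole over $n$ fixes one such pair of indices for infinitely many $n$, so those two representatives agree on all of $(-\infty,0)$ and are asymptotic to each other, a contradiction. You never run this three-pair pigeonhole --- you work with a single pair and try to classify ``configurations'' --- and you yourself flag the counting step as an unresolved obstacle, so the proposal does not yet prove the statement.

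For the refinement the same correction applies: the role of the common suffix of $A_n$ and $B_n$ of length $\ell_n\to\infty$ is not to exclude a phase shift, but to make the left tail $x_{[-|u_n|-\ell_n,0)}$ independent even of \emph{which} of $A_n,B_n$ precedes the cut at $-|u_n|$. Then all normalized asymptotic pairs share a left tail of length $|u_n|+\ell_n\to\infty$ ending at $0$, so any two of them are asymptotic and there is at most one component. Supplying these pigeonhole and uniformization steps would turn your outline into essentially the paper's proof.
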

\begin{proof}  
By hypothesis, for all $n\geq 1$ there exist 
a morphism $\tau_n:\cA_n^* \to \cA^*$ with 
$\tau_n(\cA_n)=\cW_n=\{A_n,B_n\}$ and a subshift $Y_n\subseteq \cA_n^\Z$ such that 
$X$ is the smallest subshift containing $\tau_n(Y_n)$ and all $x\in X$ has a unique 
$\cW_n$-factorization adapted to $(Y_n,\tau_n)$. 
To avoid notations, in what follows the factorizations will be always adapted to the corresponding subshifts and morphisms, which will be clear from the context.

The proof is by contradiction. Let us assume that $\{x^{(0)},y^{(0)}\}$, $\{x^{(1)},y^{(1)}\}$, $\{ x^{(2)},y^{(2)}\}$ are left asymptotic pairs representatives of three different asymptotic components of $(X,S)$. 
After shifting them, we can assume that  $x^{(j)}_{(-\infty,-1]} = y^{(j)}_{(-\infty,-1]}$ and $x^{(j)}_{0} \neq y^{(j)}_{0} $ for each $j\in \{0,1,2\}$. 

Fix $n\in \N^{\ast}$ and $j\in \{0,1,2\}$. Since the distance between $S^{-k} x^{(j)}$ and $S^{-k} y^{(j)}$ goes to $0$ as $k$ goes to $+\infty$, then the fact that $\cW_n$ is recognizable in $X$ is a local property (see Lemma \ref{lem:rayon-rec}) implies that there exists an integer $\ell \le 0$ such that the $\cW_n$-factorizations of $x^{(j)}$ and $y^{(j)}$ coincide on the interval $(-\infty, \ell)\cap \Z$. Let $\ell^{(j)}$ be the largest such integer and denote by $F_j$ the $\cW_n$-factorization of $x^{(j)}$. 
We set $a^{(j)} = \max C(F_j)\cap (-\infty, \ell^{(j)})$.
In particular, $S^{a^{(j)}}x^{(j)}$ and $S^{a^{(j)}}y^{(j)}$ are in two different cylinders $[A_{n}]$, $[B_{n}]$.

Let $k_{j}$ be an integer and $w^{(j)}$ be a word in $\cW_n$ such that $F_j(k_j)=(w^{(j)}, a^{(j)}- |w^{(j)}|)$.
Then $w^{(j)}$ is the word $x^{(j)}_{[a^{(j)}- |w^{(j)}|,a^{(j)})}$ before the cutting point $a^{(j)}$. 

Let $u_{n}$ be the word given by \cref{lem:prefix} associated to $\cW_n=\{A_{n}, B_{n}\}$. The word $u_{n}$ is the maximal common prefix of $x^{(j)}_{[a^{(j)}, +\infty)}$ and $y^{(j)}_{[a^{(j)}, +\infty)}$ and is independent of $j$. 
So its length $|u_{n}|$ is equal to $|a^{(j)}|$ for any $j \in \{0,1,2\}$. By the pigeonhole principle, there are two indices $j_{1}$ and $j_{2} \in \{0,1,2\}$  sharing the same word $w^{(j_{1})}  = w^{(j_{2})}$. Hence the words $x^{(j)}_{[-|u_{n}|-|w^{(j)}|,0)}= w^{(j)}u_n$ are  all the same for $j \in \{j_{1}, j_{2}\}$. 
    
Repeating the same argument for each $n\geq 1$ we obtain a sequence of pairs of indices $\{j_{1}^{(n)}, j_{2}^{(n)} \}$ such that the word $x^{(j)}_{[-|u_{n}|-|w^{(j)}| 
,0)}$, with $ w^{(j)} $ in $ \{A_{n}, B_{n}\}$, is common  to each $j$ in $ \{j_{1}^{(n)}, j_{2}^{(n)} \}$. 
Again the pigeonhole principle ensures that there is a pair of two indices $ \{j_{1}, j_{2}\}$ common  to infinitely many integers $n$. Hence  $x^{(j_{1})}_{(-\infty, 0)}=x^{(j_{2})}_{(-\infty, 0)}$.
It follows that the sequences $x^{(j_{1})}$ and $x^{(j_{2})}$ are asymptotic, which is a contradiction.

In the case where the length $\ell_{n}$  of  the maximal common suffix of $A_{n}$ and $B_{n}$ goes to infinity with $n$, the words $x^{(j)}_{[-|u_{n}| - \ell_{n},0)}$  are all equal for any $j \in \{0,1,2\}$. Making $n$ converging to infinity allows to conclude that the sequences $x^{(0)}$, $x^{(1)}$ and $x^{(2)}$ are asymptotic, proving that in this case there exists at most one asymptotic component.
\end{proof} 

The next result is a direct application of \cref{prop:ACRank2} and \cref{theo:BSTY}. Recall that a topological rank $2$ minimal Cantor system is always conjugate to a $\cS$-adic subshift on two letters where the morphisms are proper (see \cref{section:finiterankvsSadic}).

\begin{cor}\label{cor:ACRank2}
Let $\cA$ be an alphabet with $2$ letters.
Suppose the $\cS$-adic subshift $(X_{\boldsymbol{\tau}}, S)$ generated by the sequence 
$\boldsymbol{\tau}=(\tau_{n}\colon \cA^* \to \cA^{\ast})_{n\ge 0}$ of non-erasing morphisms is aperiodic and  minimal. Then, $(X_{\boldsymbol{\tau}},S)$ has at most two asymptotic components. 

Let $(X,T)$ be a topological rank $2$ minimal Cantor system, then it has a unique asymptotic component.  
\end{cor}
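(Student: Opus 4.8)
The plan is to deduce both statements from \cref{prop:ACRank2} by exhibiting, in each case, suitable recognizable families of words of the form $\{A_n, B_n\}$ with increasing length. For the first statement, I would apply \cref{theo:BSTY} directly. Since $(X_{\boldsymbol{\tau}},S)$ is an aperiodic $\cS$-adic subshift generated by a sequence of non-erasing morphisms $\boldsymbol{\tau}=(\tau_n\colon \cA^*\to\cA^*)_{n\ge 0}$ on a two-letter alphabet $\cA=\{0,1\}$, \cref{theo:BSTY} guarantees that for every $n\ge 1$ the set $\{\tau_{[0,n)}(0),\tau_{[0,n)}(1)\}$ is recognizable in $X_{\boldsymbol{\tau}}$. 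Setting $A_n=\tau_{[0,n)}(0)$ and $B_n=\tau_{[0,n)}(1)$, these are two distinct non-empty words; their lengths increase with $n$ because the subshift is aperiodic (so in particular infinite) and minimal, which forces the images to grow. Applying the first part of \cref{prop:ACRank2} then yields at most two asymptotic components.

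For the second statement, I would first invoke the remark preceding the corollary: a topological rank $2$ minimal Cantor system $(X,T)$ is, by \cref{theo:equivalenceSadicvsFiniteRank}\eqref{theo:partone} together with \cref{theo:DowMaass} (via \cref{theo:equivalenceSadicvsFiniteRank2} and \cref{prop:DurandLeroy2.3}), topologically conjugate to an aperiodic minimal $\cS$-adic subshift on two letters generated by a \emph{proper} recognizable directive sequence $\boldsymbol{\tau}=(\tau_n\colon\cA^*\to\cA^*)_{n\ge 0}$. Again set $A_n=\tau_{[0,n)}(0)$ and $B_n=\tau_{[0,n)}(1)$; by \cref{theo:BSTY} (or directly by recognizability of the directive sequence) the family $\{A_n,B_n\}$ is recognizable in $X$ for each $n$, and has increasing length. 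The point now is that \cref{prop:ACRank2} yields \emph{one} asymptotic component as soon as the maximal common \emph{suffix} of $A_n$ and $B_n$ has length tending to infinity.

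The main obstacle, and the crux of the argument, is precisely verifying that the common suffix length of $A_n$ and $B_n$ grows without bound. This is where properness is essential: since each $\tau_k$ with $k\ge 1$ is proper, every word $\tau_k(a)$ ends in the same letter, say $z_k\in\cA$, independently of $a$. Consequently $A_n=\tau_0\cdots\tau_{n-1}(0)$ and $B_n=\tau_0\cdots\tau_{n-1}(1)$ both end in the word $\tau_0\cdots\tau_{n-2}(z_{n-1})$, so they share a common suffix of length $|\tau_{[0,n-1)}(z_{n-1})|$. By primitivity of $\boldsymbol{\tau}$ this quantity tends to infinity with $n$. I would make this precise by an induction: if $A_{n-1}$ and $B_{n-1}$ share a common suffix $S_{n-1}$ of length $\ell_{n-1}$, then, writing $A_n=\tau_0(w)$ and $B_n=\tau_0(w')$ where $w=\tau_{[1,n)}(0)$ and $w'=\tau_{[1,n)}(1)$ end in the same letter by properness, one shows that $A_n$ and $B_n$ share at least the suffix $\tau_0$ applied to that common terminal letter, and iterating through all levels produces a common suffix of length $\ge\langle\tau_{[0,n-1)}\rangle\to\infty$. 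Once this is established, the second part of \cref{prop:ACRank2} gives exactly one asymptotic component, completing the proof.
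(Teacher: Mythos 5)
Your overall route is exactly the paper's: the paper proves this corollary in two sentences, as ``a direct application of \cref{prop:ACRank2} and \cref{theo:BSTY}'', recalling that a topological rank $2$ minimal Cantor system is conjugate to a $\cS$-adic subshift on two letters with proper morphisms. Your treatment of the second statement --- using properness of the $\tau_k$, $k\ge 1$, to show that $A_n$ and $B_n$ share the suffix $\tau_{[0,n-1)}(z_{n-1})$, whose length is at least $\langle\tau_{[0,n-1)}\rangle$ and hence tends to infinity by primitivity --- is correct and is precisely the detail the paper leaves implicit.

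The one step I would push back on is your justification of the ``increasing length'' hypothesis in the first statement. You assert that aperiodicity and minimality force both $|\tau_{[0,n)}(0)|$ and $|\tau_{[0,n)}(1)|$ to grow. That is false as stated: take the constant directive sequence $\tau_n(0)=0010$, $\tau_n(1)=1$ (a binary presentation of the Chacon subshift, which the paper itself cites as an aperiodic minimal non-primitive example); here $\tau_{[0,n)}(1)=1$ for every $n$. Non-erasingness only gives monotonicity of the lengths, not divergence, and the proof of \cref{prop:ACRank2} genuinely needs $\min(|A_n|,|B_n|)\to\infty$: the common word it produces has length $|u_n|+|w^{(j)}|$ with $w^{(j)}\in\{A_n,B_n\}$, and in the Chacon example one can have $u_n$ empty and $w^{(j)}=1$, so the final ``hence the two points agree on $(-\infty,0)$'' breaks down. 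To be fair, the paper's own two-sentence proof glosses over exactly the same point; but a complete argument for the first statement needs either an additional hypothesis guaranteeing $\langle\tau_{[0,n)}\rangle\to\infty$ (e.g. primitivity), or an explanation of how to replace a degenerate directive sequence by one whose associated recognizable two-word families have lengths going to infinity.
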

As an example consider the  Prouhet-Thue-Morse  morphism (substitution) $\tau$ defined by $0 \mapsto 01$, $1\mapsto 10$. The subshift generated by this substitution is a $\cS$-adic subshift with only one substitution called {\em Prouhet-Thue-Morse subshift}.
The sequences $\lim_{n\to +\infty} \tau^{2n}(0).\lim_{n\to +\infty} \tau^{2n}(i)$ and  $\lim_{n\to +\infty} \tau^{2n}(1).\lim_{n\to +\infty} \tau^{2n}(i)$ are in the associated subshift and are  asymptotic for each $i \in \{0,1\}$. Hence, by Corollary \ref{cor:ACRank2}, the subshift has exactly two asymptotic components but it is not of topological rank $2$. Actually, using techniques of substitution theory on return words of \cite{Durand&Host&Skau:1999} and \cite{Durand&Leroy:2012}, a long but standard computation enables us to  create a $\cS$-adic subshift  conjugate to the Prouhet-Thue-Morse subshift, with a proper directive sequence of alphabet rank $3$.
Thus  its topological  rank  is not greater than $3$, so that its topological rank is exactly $3$.    

\section{Final remarks and questions}
\label{sec:questions.}

The bounds obtained on the topological rank (\cref{theo:equivalenceSadicvsFiniteRank},  \cref{lem:sufforfiniterank} or \cref{cor:NonSuperLinareFR})  for a minimal subshift  are far to be optimal. 
In addition, even for classical examples ({\it e.g.}, provided by substitutions), one observes that the computation of their topological rank is not an easy task. 
It makes us  wonder about the calculability of the topological rank of a minimal subshift or about the  alphabet rank of a $\cS$-adic subshift.
To be more precise, we may  consider this problem for systems described by explicit algorithms, like substitutive subshifts or effective $\cS$-adic subshifts. In this context, we ask
\begin{ques}
Do there exist algorithms that taking as input the data of a directive sequence of morphisms, compute the alphabet/topological rank of the corresponding $\cS$-adic subshift ? 
\end{ques} 
A less ambitious objective is to obtain lower and upper bounds for the alphabet/topological rank. In this respect  studying the complexity function of the subshift could provide partial answers. In light of  \cref{cor:SadicNonSuperQuadra} and \cref{cor:proptowersimplynonsup}, we ask:
\begin{ques}
For any $r\ge 1$ does there exists  $d >0 $ such that  any  aperiodic minimal $\cS$-adic subshift $(X,S)$  generated by a positive directive sequence $\boldsymbol{\tau}=(\tau_{n}\colon \cA^*_{n+1}\to \cA^*_{n})_{n\ge 0}$ with $\liminf |\cA_n| \le r$, satisfies  $\liminf_{n\to \infty}  p_X(n)/n^d=0$?
If not, provide a counter example.
\end{ques} 
The Prouhet-Thue-Morse subshift illustrates how the number of asymptotic components may provide a lower bound of the topological rank of a subshift. Moreover it is possible that results in \cref{sec:RankTwo} are available for  minimal subshifts of higher topological or alphabetical ranks.

\begin{ques} \label{ques:FiniteAsymptotic} 
How much of the analysis made in \cref{sec:RankTwo} can be extended to minimal systems of finite topological/alphabet rank  greater than two ?
In particular, do finite topological rank minimal Cantor systems have finitely many asymptotic components? 
\end{ques} 

This question is of independent interest, since a positive answer to it implies for instance that the automorphism groups of such systems are virtually $\Z$, by a direct application of results in \cite{Donoso&Durand&Maass&Petite:2016}. Right before finishing this article we got informed that a big progress has been made by B. Espinoza regarding \cref{ques:FiniteAsymptotic} (personal communication).  

In Theorem \ref{theo:equivalenceSadicvsFiniteRank} part (2) we stated that minimal $\cS$-adic subshifts of bounded alphabet rank are topological factors of finite topological rank minimal Cantor systems. It is natural to think that a $\cS$-adic subshift itself is of finite topological rank, but this relation is still unclear. So it is natural to ask in general, 

\begin{ques} 
Is a topological factor of a minimal Cantor system of finite topological rank of  finite topological rank too? 
\end{ques} 
We suspect this question has a positive answer since in the particular case of a subshift of non-superlinear complexity the subshifts factors also have a non-superlinear complexity, hence they have a finite topological rank.  

It is interesting (and maybe more easy to manage) to restrict all the former questions to a rich class of systems with a specific dynamical property, like Toeplitz subshifts. Already for this particular case the questions are open since we do not know a suitable characterization of Toeplitz subshifts with finite topological rank.
More precisely and motivated by the fact that any Toeplitz subshift admits a Bratteli-Vershik representation whose towers per level have the same height, we ask the following:
\begin{ques} \label{question:Toeplitz_Nonsuperlinear}
Let $(X,S)$ be a Toeplitz subshift. Is it true that 
$X$ has finite topological rank if and only if the complexity of $X$ is non-superlinear? \end{ques}
The subtle point of this question is that the representation of the  Toeplitz subshift with a bounded number of vertices at each level does not need to satisfy the ERS property (recall the discussion right after \cref{cor:proptowersimplynonsup}), and conversely, the representation satisfying the ERS property may not have a bounded number of vertices at each level.  To answer \cref{question:Toeplitz_Nonsuperlinear} it suffices to give a positive answer to the following: does a Toeplitz subshift of finite topological rank admit a representation satisfying the ERS property with a bounded number of vertices at each level? 

A positive answer to this question would allow us to better understand all previous questions within the family of Toeplitz subshifts.

\bibliography{biblio}
\bibliographystyle{amsalpha}

\end{document}